\documentclass{article}

\usepackage{graphicx} 
\usepackage{amsmath,amssymb,bm,amsthm,mathrsfs,amscd,authblk}
\usepackage{here}

\usepackage{fancyhdr}
\usepackage{tikz}
\usepackage{color}
\theoremstyle{plain}
\newtheorem{Them}{Theorem}[section]
\newtheorem{Lem}[Them]{Lemma}
\newtheorem{Cor}[Them]{Corollary}

\newtheorem{Conj}[Them]{Conjecture}
\newtheorem{Que}[Them]{Question}
\theoremstyle{definition}
\newtheorem{Defi}[Them]{Definition}
\theoremstyle{remark}

\DeclareMathOperator{\lcm}{lcm}
\DeclareMathOperator{\Mat}{Mat}
\DeclareMathOperator{\rank}{rank}

\numberwithin{equation}{section}

\setlength{\oddsidemargin}{0mm}
\setlength{\evensidemargin}{0mm}
\setlength{\textwidth}{168mm}
\setlength{\topmargin}{-5mm}
\setlength{\textheight}{220mm}


\title{Characteristic quasi-polynomials of deletions of Shi arrangements of type B and their period collapse}
\pagestyle{fancy}
\lhead{Characteristic quasi-polynomials of deletions}
\rhead{}
\author{Akihiro Higashitani \thanks{Graduate School of Information Science and Techonology, Osaka University, Suita 565-0871, Japan. \\
Email: higashitani@ist.osaka-u.ac.jp} \and Norihiro Nakashima \thanks{Department of Mathematics, Nagoya Institute of Technology, Aichi 466-8555, Japan. \\
Email: nakashima@nitech.ac.jp}}
\date{}

\begin{document}

\maketitle

\begin{abstract}
    Characteristic quasi-polynomials are the enumerative functions counting the number of elements in the complement of hyperplane arrangements modulo positive integers. 
    A notable phenomenon in this context is period collapse, where the quasi-polynomial reduces to a polynomial or has a smaller period than the lcm period.
    In this paper, we compute the characteristic quasi-polynomials of the restriction of the Shi arrangement of type B by one given hyperplane. 
    As a corollary, we completely determine whether period collapse occurs in the characteristic quasi-polynomial of the deletion of the Shi arrangement of type B. 
    This implies the solution for the conjecture posed by Higashitani, Tran and Yoshinaga in this case. 
    
\noindent
{\bf Key Words:}
Hyperplane arrangements, Characteristic quasi-polynomials, Period collapse, Shi arrangements

\noindent
{\bf 2020 Mathematics Subject Classification:}
Primary 32S22, Secondary 52C35.
\end{abstract}

\tableofcontents

\section{Introduction}

Quasi-polynomials arise naturally in various areas of mathematics, including Ehrhart theory, representation theory, and the study of arithmetic and combinatorial invariants associated with discrete geometric structures.
One of the most remarkable phenomena in this context is \emph{period collapse}, where a quasi-polynomial exhibits a strictly smaller period than expected, or even reduces to an ordinary polynomial.
Despite its apparent simplicity, period collapse remains poorly understood in general, and only a limited number of explicit and structurally rich examples have been identified so far.
In this paper, we focus on the characteristic quasi-polynomial of hyperplane arrangements and analyze the occurrence of period collapse, offering new contributions to this context of research.

In the theory of hyperplane arrangements, the characteristic polynomial of an arrangement $\mathcal{A}$ is a fundamental invariant encoding deep algebraic and combinatorial information.
In \cite{KTT08,KTT11}, Kamiya, Takemura, and Terao introduced a natural generalization of the characteristic polynomial, called the \emph{characteristic quasi-polynomial}.
More precisely, they showed that for a hyperplane arrangement $\mathcal{A}$ defined over the integers, the number of points in the complement of $\mathcal{A}$ when regarded as a subset of $(\mathbb{Z}/q\mathbb{Z})^m$ is a quasi-polynomial of $q$.
(The precise definition will be recalled in Section~\ref{sec:char}.)
This invariant provides a bridge between hyperplane arrangements and arithmetic aspects of quasi-polynomial behavior.
In fact, recent work by Liu, Tran, and Yoshinaga \cite{Liu-Tran-Yoshinaga21} shows that characteristic quasi-polynomials are closely related to invariants of associated toric arrangements, further clarifying the combinatorial and arithmetic information captured by these invariants.

Since its introduction, the characteristic quasi-polynomial has been computed and studied for several important classes of hyperplane arrangements.
For instance, characteristic quasi-polynomials of extended Linial arrangements play a key role in the proof of a conjecture of Postnikov and Stanley \cite{PostnikovStanley} (see \cite{Yoshinaga2018a} and \cite{Tamura23}).
Characteristic quasi-polynomials of extended Shi arrangements have also been investigated, revealing rich arithmetic and combinatorial structures and providing concrete instances where periodic phenomena can be analyzed explicitly:
\begin{Them}[{Yoshinaga \cite[Theorem 5.1]{Yoshinaga2018}}]\label{thm:Y}
    Let $k \in \mathbb{Z}_{>0}$. The characteristic quasi-polynomial of the extended Shi arrangement $\mathcal{A}_\Phi^{[1-k,k]}$ is equal to $(q-kh)^m$, where $h$ is its Coxeter number. 
\end{Them}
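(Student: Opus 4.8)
The plan is to interpret the characteristic quasi-polynomial as a lattice-point count in a finite torus, and then to evaluate that count by partitioning $V=\mathbb{R}^m$ into the alcoves of the affine Weyl group. Writing $P^\vee = \{x \in V : \alpha(x) \in \mathbb{Z}\ \text{for all}\ \alpha \in \Phi\}$ for the coweight lattice (the natural integral structure on which every root takes integer values), the definition recalled in Section~\ref{sec:char} gives, for each $q \in \mathbb{Z}_{>0}$,
\[
    \chi^{\mathrm{quasi}}_{\mathcal{A}_\Phi^{[1-k,k]}}(q) = \#\bigl\{\, \bar x \in P^\vee/qP^\vee : \alpha(x) \not\equiv j \pmod q \ \text{for all}\ \alpha \in \Phi^+,\ j \in \{1-k, \dots, k\} \,\bigr\}.
\]
Thus I must count the points of the finite torus $P^\vee/qP^\vee$ that lie on none of the $2k$ affine hyperplanes $\alpha(x)=j$ attached to each positive root $\alpha$.

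The main tool is the tiling of $V$ by the alcoves of the affine Weyl group $\widehat{W} = W \ltimes Q^\vee$, whose fundamental alcove is $A_0 = \{x : \alpha_i(x) > 0\ (1\le i\le m),\ \tilde\alpha(x) < 1\}$, with $\tilde\alpha$ the highest root. The Coxeter number enters here through the height of the highest root, namely $\tilde\alpha = \sum_i c_i\alpha_i$ with $\sum_i c_i = h-1$, equivalently through $|\Phi^+| = mh/2$. The key step is to establish a Worpitzky-type partition: I would set up, uniformly in $q$, a bijection between the admissible torus points counted above and the lattice points $P^\vee \cap (q-kh)D$, where $D$ is the half-open fundamental parallelepiped of $P^\vee$ spanned by the fundamental coweights. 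Since any full-rank lattice meets the $(q-kh)$-dilate of its own fundamental parallelepiped in exactly $(q-kh)^m$ points, this immediately yields $\chi^{\mathrm{quasi}}_{\mathcal{A}_\Phi^{[1-k,k]}}(q) = (q-kh)^m$. Intuitively, imposing the $2k$ affine conditions per root together with the height-$(h-1)$ highest root removes a boundary slab that consumes exactly $kh$ units of room in each fundamental-coweight coordinate, shrinking the fundamental box of side $q$ to one of side $q-kh$.

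I expect the main obstacle to be proving that this reorganization is exact for \emph{every} $q$, rather than only for $q$ in a fixed residue class or for large primes. For large primes the identity already follows from Athanasiadis' finite-field evaluation of the characteristic \emph{polynomial} as $(t-kh)^m$; the real content of the theorem --- and precisely the reason the quasi-polynomial undergoes total period collapse --- is that the Worpitzky partition produces a clean product with \emph{no} correction terms depending on $\gcd(q,\cdot)$ or on the index of connection $[P^\vee : Q^\vee]$. Controlling the interplay between the two lattices $P^\vee$ and $Q^\vee$ under the $\widehat{W}$-action, and checking that the alcove-to-box correspondence restricts to the lattices on the nose for all $q$ simultaneously, is the delicate combinatorial heart of the argument; once it is in place, recognizing the resulting count as the number of lattice points in a dilated parallelepiped finishes the proof.
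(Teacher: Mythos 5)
Your proposal takes a genuinely different route from the paper --- indeed, it is an outline of the strategy of the cited source \cite{Yoshinaga2018} itself (points of $P^\vee/qP^\vee$, alcoves of the affine Weyl group, a Worpitzky-type partition). The paper never reproves the theorem in that generality: in Section~\ref{sec-count-method} it proves only the instance it later needs, namely $\Phi=B_m$ and $k=1$ (so $h=2m$ and the answer is $(q-2m)^m$), via Athanasiadis' elementary box-and-circle bijection for odd $q$, extended to even $q$ by conditioning on whether some coordinate equals $q/2$ and summing with Lemma~\ref{hodai}. What the paper's method buys is not generality but reusability: the same box-placement counting, modified hyperplane by hyperplane, drives all of the restriction computations in Theorem~\ref{thm:main}, which an alcove-theoretic argument would not obviously provide.

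As a proof, however, your proposal has a genuine gap: the sentence ``I would set up, uniformly in $q$, a bijection between the admissible torus points and $P^\vee \cap (q-kh)D$'' \emph{is} the theorem; everything before it is standard setup and everything after it is immediate, so nothing is actually proved. Moreover, the intuition offered for that bijection --- each alcove uniformly loses a slab of total width $kh$ --- fails at the per-alcove level, because the Shi interval $[1-k,k]$ is not symmetric under $\alpha \mapsto -\alpha$, so the admissibility conditions are not invariant under the ($q$-scaled) affine Weyl group action that permutes the dilated alcoves: transporting the conditions from an alcove $wA_0$ back to $A_0$ replaces the window $k+1 \le \alpha(x) \le q-k$ by $k \le \alpha(x) \le q-k-1$ precisely for the positive roots inverted by the linear part of $w$, and since inversion sets are not additive, no single lattice translation realizes these shifts. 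Concretely, for $\Phi = A_2$ the two alcoves in a fundamental domain of $P^\vee$ contribute the \emph{unequal} counts $\binom{q-3k}{2}$ and $\binom{q-3k+1}{2}$, which happen to sum to $(q-3k)^2$; it is exactly this kind of cancellation, uniformly in $q$ and for all $\Phi$, that the Worpitzky-partition machinery of \cite{Yoshinaga2018} is built to establish, and it is exactly the step your proposal defers as ``the delicate combinatorial heart.'' Deferring it leaves the argument as a correct roadmap of Yoshinaga's proof rather than a proof.
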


Recall that the Shi arrangement $\mathcal{B}_m$ of type B is defined by
\begin{align*}
\mathcal{B}_m:=\{\{x_i=0\},\{x_i=1\}\mid i\in [m]\}\cup\{\{x_i\pm x_j=0\},\{x_i\pm x_j=1\}\mid 1\leq i<j\leq m\}, 
\end{align*}
where $[m]=\{1,2,\ldots,m\}$. 
In this paper, we study deletions of Shi arrangements associated with the root system of type~$B$.
These arrangements form a natural and tractable family within the broader framework of hyperplane arrangements, where explicit computations of characteristic quasi-polynomials are possible.
Our main results provide closed formulas for these quasi-polynomials and show that, in this setting, period collapse occurs systematically.
Thus, the Shi arrangements of type~$B$ offer a new and computable source of examples illuminating the general phenomenon of period collapse in characteristic quasi-polynomials.

To be precise, we say that a period collapse occurs for a characteristic quasi-polynomial of a hyperplane arrangement $\mathcal{A}$ if its minimum period is strictly smaller than the lcm period. (See Section~\ref{sec:pc} for precise definitions and further details.)
It is shown in \cite{HTY2023} that for any positive integers $m$, $p$ and $s$ such that $s$ divides $p$, there exists a non-central hyperplane $m$-arrangement whose characteristic quasi-polynomial has lcm period $p$ but has minimum period $s$. 
Roughly speaking, this means that ``any possible period collapse'' can occur. 
In addition, Theorem~\ref{thm:Y} shows that the characteristic quasi-polynomial of the extended Shi arrangement is in fact a polynomial, i.e., its minimum period equals $1$.
On the other hand, the lcm period of the extended Shi arrangement of type B is known to be $2$. 
Hence, period collapse occurs in the extended Shi arrangement of type B. 

\medskip

Now, it is natural to think of finding a new example of arrangements in which period collapse occurs. 
The main motivation to organize this paper is to tackle the following naive question: 
\begin{Que}\label{q}
    Let $\mathcal{A}$ be a hyperplane arrangement. Fix $H \in \mathcal{A}$. 
    When does period collapse occur in $\mathcal{A}\setminus \{H\}$? 
\end{Que}

This paper gives a complete solution of this question when $\mathcal{A}=\mathcal{B}_m$, i.e., in the case of the Shi arrangement of type B. 
For this purpose, we compute the characteristic quasi-polynomial of the restrictions $\mathcal{B}_m^H$ of $\mathcal{B}_m$ by a given hyperplane $H \in \mathcal{B}_m$. 
The following theorem is the main result of this paper: 
\begin{Them}[See Theorems~\ref{chara-quasi-del-by-xi=0}, \ref{chara-quasi-del-by-xi=1}, \ref{chara-quasi-del-by-xi=xj}, \ref{chara-quasi-del-by-xi=xj+1}, \ref{chara-quasi-del-by-xi=-xj} and \ref{chara-quasi-del-by-xi=-xj+1}]\label{thm:main}
Fix $i$ and $j$ with $1\leq i<j \leq m$. Then 
\begin{align*}
\left|M((\mathcal{B}_m^{\{x_i=0\}})_q)\right|&= (q-2m+1)^{m-i}(q-2m+2)^{i-1}; \\
\left|M((\mathcal{B}_m^{\{x_i=1\}})_q)\right|&= (q-2m)^{i-1}(q-2m+1)^{m-i}; \\
\left|M((\mathcal{B}_m^{\{x_i-x_j=0\}})_q)\right|&=
\begin{cases}
(q-2m+1)^{j-i-1}(q-2m+2)^{m-j}((q-2m+2)^i-(q-2m+1)^{i-1}) \;\;&\text{if $q$ is odd}, \\
(q-2m+1)^{j-i-1}(q-2m+2)^{i-1}((q-2m+2)^{m-j+1}-(q-2m+1)^{m-j}) &\text{if $q$ is even}; 
\end{cases} \\
\left|M((\mathcal{B}_m^{\{x_i-x_j=1\}})_q)\right|&=
(q-2m)^{m+i-j}(q-2m+1)^{j-i-1}; \\
\left|M((\mathcal{B}_m^{\{x_i+x_j=0\}})_q)\right|&=
\begin{cases}
(q-2m)^{m-j}(q-2m+1)^{j-i}((q-2m+2)^{i-1}-(q-2m+1)^{i-2}) \;\;&\text{if $q$ is odd}, \\
(q-2m)^{m-j+1}(q-2m+1)^{j-i-1}(q-2m+2)^{i-1} &\text{if $q$ is even}; 
\end{cases} \\
\left|M((\mathcal{B}_m^{\{x_i+x_j=1\}})_q)\right|&=
\begin{cases}
(q-2m)^{i-1}(q-2m+1)^{j-i}(q-2m+2)^{m-j} \;\;&\text{if $q$ is odd}, \\
(q-2m)^{i-1}(q-2m+1)^{j-i-1}((q-2m+2)^{m-j+1}-(q-2m+1)^{m-j}) &\text{if $q$ is even}. 
\end{cases}
\end{align*}
\end{Them}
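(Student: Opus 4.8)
The plan is to read the six formulas as six separate restriction computations that share one method, rather than trying to deduce some from others by symmetry. I do not expect a single symmetry of $\mathcal{B}_m$ to reduce the cases: the asymmetry between the $\{\cdot = 0\}$ and $\{\cdot = 1\}$ hyperplanes breaks the natural sign-change map $x_j \mapsto -x_j$ and the affine reflection $x \mapsto \mathbf{1}-x$ (the latter sends $\{x_i-x_j=1\}$ to $\{x_i-x_j=-1\}\notin\mathcal{B}_m$), so I treat each $H$ on its own. For a fixed $H$ I would first write down $\mathcal{B}_m^H$ explicitly by substituting the defining equation of $H$ into every remaining hyperplane, realizing $\mathcal{B}_m^H$ as an integral arrangement in $(\mathbb{Z}/q\mathbb{Z})^{m-1}$. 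For a diagonal case such as $H=\{x_i-x_j=0\}$ this means setting $x_i=x_j=:y$ and recording that $\mathcal{B}_m^H$ is the type-$B$ Shi arrangement on $\{y\}\cup\{x_k:k\neq i,j\}$ (so $\mathcal{B}_{m-1}$, whose complement has size $(q-2m+2)^{m-1}$ by Theorem~\ref{thm:Y} with $k=1$), augmented by the two relations $\{2y=0\}$ and $\{2y=1\}$ coming from $\{x_i+x_j=0\}$ and $\{x_i+x_j=1\}$, together with position-dependent extra conditions on the coordinates $x_k$ with $i<k<j$ from the surviving off-diagonal relations.

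The engine of the count is that the coefficient matrix of each $\mathcal{B}_m^H$ has entries in $\{0,\pm1,\pm2\}$: I would check that this forces the lcm period to divide $2$, so it suffices to evaluate the characteristic quasi-polynomial separately on odd $q$ and on even $q$, each being an honest polynomial. For odd $q$ the element $2$ is invertible modulo $q$, so each relation $2y=c$ is a genuine affine hyperplane $\{y=c/2\}$ and $\mathcal{B}_m^H$ is a bona fide deformation of $\mathcal{B}_{m-1}$; for even $q$ the relation $2y=1$ has no solution (that hyperplane is vacuous) while $2y=0$ splits into the two parallel conditions $y=0$ and $y=q/2$. I would then enumerate by a sequential coordinate-assignment (sieve) argument in the spirit of the finite-field method, ordering the coordinates in blocks (indices below $i$, strictly between $i$ and $j$, and above $j$) and showing that each coordinate contributes a single linear factor $q-2m+c$ with $c\in\{0,1,2\}$ determined by how many forbidden values its position forces. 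The difference-of-powers terms, e.g. $(q-2m+2)^i-(q-2m+1)^{i-1}$, would arise from an inclusion–exclusion on whether the merged coordinate $y$ takes the distinguished value ($(q+1)/2$ for odd $q$, $q/2$ for even $q$) produced by the $2y=c$ relations, since that choice alters the forbidden sets of the remaining coordinates.

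Comparing the two resulting polynomials then settles the period. The node restrictions $\{x_i=0\}$ and $\{x_i=1\}$ identify no coordinates, so no $2y$-type relation appears and the answer is manifestly a single polynomial. Among the four restrictions that do identify two coordinates, I expect the parity contributions to cancel exactly for $\{x_i-x_j=1\}$ — the odd-$q$ hyperplane and the even-$q$ splitting remove the same number of points once their interaction with the other coordinates is accounted for — yielding period collapse to $1$; for $\{x_i-x_j=0\}$, $\{x_i+x_j=0\}$ and $\{x_i+x_j=1\}$ the cancellation is incomplete and the odd/even answers genuinely differ, giving minimum period $2$.

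The hard part will be the sequential-counting step for the diagonal and anti-diagonal restrictions. Because type $B$ couples each pair through both $x_a-x_b$ and $x_a+x_b$, proving that the complement count factors as a product of linear terms (rather than merely bounding it) requires controlling the overlaps among the forbidden sets $\{x_b,x_b+1,-x_b,-x_b+1\}$ as coordinates are assigned, and this is exactly where the distinguished value of $y$ interferes. Pinning down the correct inclusion–exclusion that produces the difference-of-powers factors, doing so uniformly across the boundary cases ($i=1$, $j=i+1$, $j=m$), and verifying that the six formulas are mutually consistent, is where I expect the bulk of the technical effort to lie.
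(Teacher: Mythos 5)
Your skeleton matches the paper's at the structural level: the lcm period divides $2$, so it suffices to compute one polynomial for odd $q$ and one for even $q$; the counts are organized by the index blocks $\{1,\dots,i-1\}$, $\{i+1,\dots,j-1\}$, $\{j+1,\dots,m\}$; and your description of the restricted arrangements is accurate (e.g.\ setting $x_i=x_j=y$ does produce a copy of $\mathcal{B}_{m-1}$ together with $2y\neq 0$, $2y\neq 1$ and the two-sided constraints $y\neq x_k+1$, $x_k\neq y+1$ exactly for $i<k<j$).

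However, the engine you propose --- sequential coordinate assignment in which ``each coordinate contributes a single linear factor $q-2m+c$'' --- is precisely the step that fails, and you offer no device to repair it; you yourself defer it as ``the hard part.'' When $x_t$ is assigned after $x_{s_1},x_{s_2},\dots$, its forbidden set is $\{0,1\}\cup\bigcup_s\{x_s,\,x_s-1,\,-x_s,\,-x_s+1\}$, and these sets overlap in configuration-dependent ways: for instance $x_t=-x_s+2$ is a permitted relative position, yet then $x_t-1=-x_s+1$, so the size of the union, hence the number of choices at the next step, is not constant over valid partial assignments. (Worse, the singleton sets degenerate: for odd $q$ the value $(q+1)/2$ has forbidden set of size $2$, and for even $q$ the value $q/2$ has size $3$.) So the complement count does not factor coordinate-by-coordinate under naive sieving. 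The paper's proof supplies exactly the missing mechanism: Athanasiadis's boxes-and-circles bijection (extended in Section~\ref{sec-count-method-even} to even $q$), under which elements of the complement correspond to placements of the labels into boxes, and the number of admissible boxes for each label genuinely is a constant depending only on its block --- that is what produces the linear factors. A second concrete defect: you confine the distinguished-value inclusion--exclusion to the merged coordinate $y$, but for even $q$ the self-negative value $q/2$ must be tracked for \emph{every} coordinate; the paper's even-$q$ computations are sums over which $k\in[m]$ (if any) satisfies $x_k=q/2$, telescoped into the difference-of-powers form by Lemma~\ref{hodai}. In the case $H=\{x_i-x_j=0\}$ with $q$ even this is fatal to your plan: the merged coordinate can never equal $q/2$ (that would force $x_i+x_j=0$, case (i\hspace{-0.5mm}i-2) of Section~\ref{sec-quasi-poly-restxi=xj-even}), so the entire even-$q$ answer comes from the other coordinates taking that value, which your inclusion--exclusion never sees.
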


As a corollary of Theorem~\ref{thm:main}, we obtain the following: 
\begin{Cor}\label{period-collapse-1}
Fix $H \in \mathcal{B}_m$. 
Then the characteristic quasi-polynomial of $\mathcal{B}_m \setminus \{H\}$ becomes a polynomial if and only if $H$ is one of the following: 
\begin{itemize}
    \item $H=\{x_i=0\}$ for $1 \leq i \leq m$; 
    \item $H=\{x_i=1\}$ for $1 \leq i \leq m$;
    \item $H=\{x_i-x_{m+1-i}=0\}$ for $1 \leq i \leq m$;
    \item $H=\{x_i-x_j=1\}$ for $1 \leq i<j \leq m$; 
    \item $H=\{x_1+x_j=0\}$ for $2 \leq j \leq m$; 
    \item $H=\{x_i+x_m=1\}$ for $1 \leq i \leq m-1$.  
\end{itemize}
\end{Cor}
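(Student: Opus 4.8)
The plan is to deduce the corollary from Theorem~\ref{thm:main} together with the deletion--restriction recurrence and Theorem~\ref{thm:Y}. The starting point is the observation that, for every modulus $q$, the complement of $\mathcal{B}_m\setminus\{H\}$ decomposes as the disjoint union of the complement of $\mathcal{B}_m$ and the complement of the restriction $\mathcal{B}_m^H$ inside $H$; counting points modulo $q$ this gives
\[
\left|M((\mathcal{B}_m\setminus\{H\})_q)\right|=\left|M((\mathcal{B}_m)_q)\right|+\left|M((\mathcal{B}_m^H)_q)\right|.
\]
Since $\mathcal{B}_m$ is exactly the Shi arrangement $\mathcal{A}_{B_m}^{[0,1]}$ of type $B_m$, whose Coxeter number is $h=2m$, Theorem~\ref{thm:Y} (with $k=1$) tells me that $\left|M((\mathcal{B}_m)_q)\right|=(q-2m)^m$ is a genuine polynomial. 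Adding a polynomial alters neither the minimum period of a quasi-polynomial nor whether it is a polynomial, so I would reduce the whole corollary to the statement that $\left|M((\mathcal{B}_m^H)_q)\right|$ is a polynomial exactly for the hyperplanes $H$ listed.

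Next I would read off the six formulas in Theorem~\ref{thm:main}. Each has lcm period dividing $2$: it is given either by a single expression, or by one expression for odd $q$ and another for even $q$. For $H=\{x_i=0\}$, $H=\{x_i=1\}$ and $H=\{x_i-x_j=1\}$ the formula is parity-independent, so the quasi-polynomial is automatically a polynomial; these yield the first, second and fourth families in the statement. For the three remaining types $\{x_i-x_j=0\}$, $\{x_i+x_j=0\}$ and $\{x_i+x_j=1\}$, the quasi-polynomial is a polynomial if and only if the odd- and even-constituents agree as polynomials in $q$, so the only task left is to decide when this happens.

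To carry out that comparison I would set $c=q-2m$, $a=q-2m+1$ and $b=q-2m+2$, noting that $b=a+1=c+2$ and that $c$, $a$, $b$ are pairwise coprime as polynomials. Cancelling the common monomial factor from the two constituents, each equality collapses to a single clean identity: $a^{i-1}b^{m-j}=a^{m-j}b^{i-1}$ for $\{x_i-x_j=0\}$, $b^{i-1}=a^{i-1}$ for $\{x_i+x_j=0\}$, and $a^{m-j}=b^{m-j}$ for $\{x_i+x_j=1\}$. Because the linear factors are pairwise coprime, unique factorization forces the exponents to match, which yields exactly $j=m+1-i$, $i=1$ and $j=m$, respectively; rewriting these under the convention $1\le i<j\le m$ recovers the third, fifth and sixth families of hyperplanes in the statement.

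The conceptual heart of the argument is the reduction of the first paragraph, after which everything is elementary bookkeeping. The step I expect to require the most care---and the closest thing to an obstacle---is establishing the necessity direction in the three parity-split cases, that is, ruling out any accidental polynomial identity between the two constituents. This is precisely where the pairwise coprimality of $q-2m$, $q-2m+1$ and $q-2m+2$ is essential, since it guarantees that the displayed exponent conditions are not merely sufficient but forced.
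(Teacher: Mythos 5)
Your proposal is correct and takes essentially the same route as the paper: it reduces the deletion to the restriction via the deletion--restriction count together with Theorem~\ref{thm:Y}, and then determines when the odd and even constituents in Theorem~\ref{thm:main} coincide. The only cosmetic difference is that you finish the three parity-split cases by cancelling common factors and invoking coprimality of $q-2m$, $q-2m+1$, $q-2m+2$, whereas the paper performs the equivalent algebraic manipulations directly; the resulting conditions $i+j=m+1$, $i=1$, $j=m$ are identical.
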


As a related question with Question~\ref{q}, the following conjecture was stated: 
\begin{Conj}[{\cite[Conjecture~5.9]{HTY2023}}]
    Let $k \in \mathbb{Z}_{>0}$. If $\delta \in \Phi^+$, then period collapse occurs in $\chi_{\mathrm{Shi}_{\delta^c}^{[1-k,k]}}^\mathrm{quasi}(q)$ in all cases except (i) $\Phi=A_m$, (ii) $\Phi=B_2$ and $\delta$ is a long root. 
\end{Conj}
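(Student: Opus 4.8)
The plan is to prove the stated conjecture by separating the claim into its two quantitative ingredients and treating them by different means: the \emph{lcm period} of $\mathcal{A}^{[1-k,k]}_{\Phi,\delta^c}:=\mathrm{Shi}^{[1-k,k]}_{\delta^c}$, and its \emph{minimum period}. The organizing tool is the deletion--restriction identity for characteristic quasi-polynomials. Since $\lvert M(\mathcal{A}_q)\rvert$ is an honest point count in $(\mathbb{Z}/q\mathbb{Z})^m$, the elementary bijection $M(\mathcal{A}\setminus H)_q=M(\mathcal{A})_q\sqcup\{x\in M(\mathcal{A}^H)_q\}$ gives $\chi^{\mathrm{quasi}}_{\mathcal{A}\setminus H}(q)=\chi^{\mathrm{quasi}}_{\mathcal{A}}(q)+\chi^{\mathrm{quasi}}_{\mathcal{A}^H}(q)$ for every $H\in\mathcal{A}$ and every $q$. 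Deleting the root $\delta$ removes the mutually parallel family $\{\delta=c\}$, $1-k\le c\le k$; because parallel hyperplanes leave no trace on one another, iterating the identity yields $\chi^{\mathrm{quasi}}_{\mathcal{A}^{[1-k,k]}_{\Phi,\delta^c}}(q)=(q-kh)^m+\sum_{c=1-k}^{k}\chi^{\mathrm{quasi}}_{(\mathcal{A}^{[1-k,k]}_\Phi)^{\{\delta=c\}}}(q)$, where the leading term is the genuine polynomial supplied by Theorem~\ref{thm:Y}. Thus the entire periodic behaviour is carried by the restriction quasi-polynomials, and the conjecture reduces to understanding them together with the period they are forced to have.

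For the lcm period I would argue purely at the level of integer coefficient matrices, using the Kamiya--Takemura--Terao description: the lcm period of an integral arrangement is the $\lcm$ of the largest elementary divisors of the coefficient submatrices indexed by subsets of the normals. As the affine levels do not enter the coefficient matrix, the lcm period of $\mathcal{A}^{[1-k,k]}_{\Phi,\delta^c}$ equals that of the linear configuration $\{\alpha=0:\alpha\in\Phi^+\setminus\{\delta\}\}$, \emph{independently of} $k$. This turns the question into a lattice computation: the maximal torsion of $\mathbb{Z}^m$ modulo sublattices spanned by subsets of $\Phi^+\setminus\{\delta\}$, which I would evaluate through Smith normal forms type by type. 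For $\Phi=A_m$ every such submatrix is a graphic (hence totally unimodular) incidence matrix, so all elementary divisors are $1$, the quasi-polynomial is a polynomial, and no collapse can occur for any $\delta$ or $k$; this settles exception~(i). For the remaining types the only obstruction to unimodularity is a subset of determinant $2$ (e.g.\ $\{e_a-e_b,e_a+e_b\}$), and I would show that removing a single root destroys \emph{every} such subset precisely when $\Phi=B_2$ and $\delta$ is long --- there the unique determinant-$2$ pair is the pair of long roots, so deleting either one unimodularizes the rest, giving lcm period $1$ and settling exception~(ii). In all other cases a determinant-$2$ (or, for exceptional types, prime-power-torsion) subset survives, so the lcm period is strictly larger than $1$ and there is something to collapse.

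It then remains to establish the strict drop of the minimum period outside (i)--(ii), i.e.\ to show that the periodic constituents of $\sum_c\chi^{\mathrm{quasi}}_{(\mathcal{A}^{[1-k,k]}_\Phi)^{\{\delta=c\}}}(q)$ coincide modulo a proper divisor of the lcm period. Each restriction onto $\{\delta=c\}$ is again a Shi-type arrangement of rank $m-1$, attached to the image of $\Phi$ under projection along $\delta$; identifying this projected configuration (a subsystem, or a folded root configuration for short roots) lets me compute its characteristic quasi-polynomial either by induction on the rank or, in the base case $k=1$, directly from the explicit formulas of Theorem~\ref{thm:main}, whose very structure --- polynomial constituents for some $\delta$ and $q$-parity-dependent constituents for others --- is the template. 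Summing over the $2k$ parallel levels and using the translation relating level-$c$ to level-$0$, I would isolate the non-polynomial part and verify that it collapses exactly when $\Phi\ne A_m$ and $(\Phi,\delta)\ne(B_2,\text{long})$, thereby matching Corollary~\ref{period-collapse-1} in the already-proven case $\Phi=B_m$, $k=1$.

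The hard part will be the uniform control of these restriction quasi-polynomials across all irreducible types, all roots $\delta$, and all $k$, and in particular the \emph{exact} cancellation of their periodic parts rather than a mere bound --- since ``period collapse'' is a strict inequality, one must pin down both the lcm period and the minimum period simultaneously, and my own small computations already show that the surviving periodic constituents depend delicately on the position of $\delta$ (short versus long, and its index within $\Phi^+$). For the classical types $B,C,D$ one can hope to extend the generating-function / finite-field computations underlying Theorem~\ref{thm:main} to general $k$, and the toric-arrangement reformulation of Liu--Tran--Yoshinaga offers an alternative route to the constituents. The exceptional types $G_2,F_4,E_6,E_7,E_8$ resist closed formulas and would likely require a case analysis organized by Weyl-group orbits of $\delta$, supplemented by direct machine computation of the finitely many constituent polynomials up to the lcm period; verifying that the strict drop holds in every such orbit, and that it fails only in the two claimed exceptions, is the crux of the argument.
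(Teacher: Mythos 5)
There is a fundamental problem with your target, not just with a step: the statement you set out to prove is \emph{false}, and this very paper disproves it in the case it settles. The paper quotes Conjecture~5.9 of \cite{HTY2023} precisely in order to resolve it negatively for $\Phi=B_m$: Corollary~\ref{period-collapse-2} shows that for $k=1$ and $m\geq 3$, deleting the parallel pair $H=\{\delta=0\}$, $H'=\{\delta=1\}$ from $\mathcal{B}_m$ yields period collapse \emph{only} when $\delta=e_i$, or $\delta=e_i-e_{m+1-i}$, or $\delta=e_i+e_{m+1-i}$ --- whereas the conjecture predicts collapse for every $\delta$ once $m\geq 3$ (the paper says explicitly that the prediction ``scarcely holds''). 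So no proof of the conjecture as stated can exist, and any plan whose final step is to ``verify that it collapses exactly when $\Phi\ne A_m$ and $(\Phi,\delta)\ne(B_2,\text{long})$'' must break down at that verification.

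It is worth noting exactly where your outline would fail, because the mechanism is visible in the formulas you yourself propose to use. Your decomposition $\left|M((\mathcal{B}_m)_q)\right|=\left|M((\mathcal{B}_m)_q\setminus\{H,H'\})\right|-\left|M((\mathcal{B}_m)_q^{H})\right|-\left|M((\mathcal{B}_m)_q^{H'})\right|$ is correct (it is the paper's Section~\ref{subsec:equiv} identity, using that parallel hyperplanes do not restrict onto one another and that each normal has a unit coefficient), and your lcm-period analysis via Smith normal forms, including why $(A_m,\text{any }\delta)$ and $(B_2,\delta\text{ long})$ have nothing to collapse, matches the paper's remarks. But take $\delta=e_i-e_j$ with $i+j\neq m+1$, say $\delta=e_1-e_2$, $m\geq 3$, $k=1$. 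By Theorem~\ref{thm:main}, $\left|M((\mathcal{B}_m^{\{x_i-x_j=1\}})_q)\right|$ is a single polynomial, while the even and odd constituents of $\left|M((\mathcal{B}_m^{\{x_i-x_j=0\}})_q)\right|$ agree if and only if $U^{i-1}(U+1)^{m-j}=U^{m-j}(U+1)^{i-1}$ with $U=q-2m+1$, i.e.\ if and only if $i+j=m+1$ (this is the computation in the proof of Corollary~\ref{period-collapse-1}). Hence the sum of the two restriction quasi-polynomials is genuinely $2$-periodic, the lcm period of the deleted arrangement is still $2$ for $m\geq 3$ (a pair $\{x_a-x_b,\,x_a+x_b\}$ with elementary divisor $2$ survives the deletion), so the minimum period equals the lcm period and no collapse occurs --- the exact cancellation you hoped to establish provably fails. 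A secondary slip: you cite Corollary~\ref{period-collapse-1} as the ``already-proven case,'' but that corollary concerns deleting a \emph{single} hyperplane; the statement relevant to the conjecture (deleting a root, i.e.\ the parallel pair) is Corollary~\ref{period-collapse-2}, which is the one that refutes it. The salvageable content of your approach is a proof of the corrected statement: in type $B$ with $k=1$, collapse occurs exactly for the three families above.
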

See \cite[Section 5]{HTY2023} for the notation used in this conjecture. 
In our setting, i.e., in the case of the Shi arrangement of type B, this conjecture predicts that if $m \geq 3$, then 
period collapse always occurs in the characteristic quasi-polynomial of $\mathcal{B}_m \setminus \{H,H^{\prime}\}$, where $H$ and $H^{\prime}$ are hyperplanes in $\mathcal{B}_m$ which are parallel each other. 
We see that this scarcely holds in general as the following corollary shows: 
\begin{Cor}\label{period-collapse-2}
Fix $H,H^{\prime} \in \mathcal{B}_m$ which are parallel each other. 
Then the characteristic quasi-polynomial of $\mathcal{B}_m \setminus \{H,H^{\prime}\}$ becomes a polynomial if and only if one of the following is satisfied (by replacing $H$ and $H^{\prime}$ if necessary): 
\begin{itemize}
    \item $H=\{x_i=0\}$ and $H^{\prime}=\{x_i=1\}$ for $1 \leq i \leq m$; 
    \item $H=\{x_i-x_{m+1-i}=0\}$ and $H^{\prime}=\{x_i-x_{m+1-i}=1\}$ for $1 \leq i \leq m$; 
    \item $H=\{x_i+x_{m+1-i}=0\}$ and $H^{\prime}=\{x_i+x_{m+1-i}=1\}$ for $1 \leq i \leq m$. 
\end{itemize}
\end{Cor}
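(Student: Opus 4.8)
The plan is to reduce the two-hyperplane deletion to the restriction data already computed in Theorem~\ref{thm:main}, via a point-counting form of deletion--restriction. Fix a pair $H,H'\in\mathcal{B}_m$ of parallel hyperplanes and let $q\geq 2$. Being parallel and distinct, they are cut out by $\{f=0\}$ and $\{f=1\}$ for a common linear form $f$, so over $\mathbb{Z}/q\mathbb{Z}$ they are disjoint (they would meet only if $0\equiv 1\pmod q$). Partitioning the points counted by $\mathcal{B}_m\setminus\{H,H'\}$ according to whether they lie off both $H$ and $H'$, on $H$, or on $H'$, and using that a point on $H$ (resp. $H'$) automatically avoids $H'$ (resp. $H$), I obtain
\[
\left|M((\mathcal{B}_m\setminus\{H,H'\})_q)\right|
=\left|M((\mathcal{B}_m)_q)\right|+\left|M((\mathcal{B}_m^{H})_q)\right|+\left|M((\mathcal{B}_m^{H'})_q)\right|
\]
for every $q\geq 2$, hence as an identity of quasi-polynomials. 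By Theorem~\ref{thm:Y} (applied with $k=1$ and Coxeter number $h=2m$) the first term equals the polynomial $(q-2m)^m$. Since every constituent here has period dividing $2$, the left-hand side is a genuine polynomial if and only if the odd-$q$ and even-$q$ constituents of $\left|M((\mathcal{B}_m^{H})_q)\right|+\left|M((\mathcal{B}_m^{H'})_q)\right|$ coincide.

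Next I would run through the three types of parallel pair, according to whether $f=x_i$, $f=x_i-x_j$, or $f=x_i+x_j$ (each linear form appears in exactly the two hyperplanes $\{f=0\}$ and $\{f=1\}$, so this is exhaustive). For $f=x_i$ both restriction counts are already polynomials by Theorem~\ref{thm:main}, so the sum is a polynomial with no constraint, yielding the first family. For $f=x_i-x_j$ the count for $\{x_i-x_j=1\}$ is a polynomial while the count for $\{x_i-x_j=0\}$ has distinct odd and even branches; as the former carries no odd/even discrepancy, the sum is a polynomial exactly when the latter is. Writing $a=q-2m+1$ and $b=q-2m+2$, equality of the two branches in Theorem~\ref{thm:main} amounts to $a^{i-1}b^{m-j}=a^{m-j}b^{i-1}$, and since $a,b$ are coprime this forces $j=m+1-i$, giving the second family.

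The main obstacle is $f=x_i+x_j$, where both restriction counts are genuine period-$2$ quasi-polynomials, so I must show that their odd/even discrepancies cancel exactly when $j=m+1-i$ and never otherwise; a priori the cancellation between two non-polynomial pieces could occur for other $(i,j)$. Setting $c=q-2m$, $a=c+1$, $b=c+2$, I would substitute the four branches from Theorem~\ref{thm:main} and simplify the difference (odd minus even) of the summed counts; the terms in $b^{i-1}$ and in $b^{m-j}$ collapse using $a-c=1$ and $a-b=-1$, and the difference reduces to
\[
a^{\,j-i-1}\,g(i-1,\,m-j),\qquad g(s,t):=c^{t}(b^{s}-a^{s})-c^{s}(b^{t}-a^{t}).
\]
It then suffices to show $g(s,t)=0$ if and only if $s=t$. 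This is clean: $g$ is antisymmetric in $(s,t)$, so it vanishes when $s=t$; and for $s,t\geq 1$ the expansion $b^{s}-a^{s}=(c+2)^{s}-(c+1)^{s}=s\,c^{s-1}+O(c^{s-2})$ shows that the top-degree term of $g$ is $(s-t)\,c^{\,s+t-1}$, which is nonzero for $s\neq t$ (the cases $s=0$ or $t=0$ being immediate). Hence the sum is a polynomial precisely when $i-1=m-j$, i.e. $j=m+1-i$, which is the third family. Collecting the three cases produces exactly the list in the statement.
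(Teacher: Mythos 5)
Your proposal is correct and follows essentially the same route as the paper: both decompose $\left|M((\mathcal{B}_m\setminus\{H,H'\})_q)\right|$ as $\left|M((\mathcal{B}_m)_q)\right|+\left|M((\mathcal{B}_m^{H})_q)\right|+\left|M((\mathcal{B}_m^{H'})_q)\right|$ using that parallel hyperplanes are disjoint modulo $q$, reduce the question to polynomiality of the sum of the two restriction counts (the first term being the polynomial $(q-2m)^m$), and settle the three parallel families by the same algebra with the formulas of Theorem~\ref{thm:main}, arriving at $i+j=m+1$ in the last two cases. Your only genuine addition is the explicit leading-coefficient argument (via the antisymmetric $g(s,t)$, whose top term is $(s-t)c^{s+t-1}$) justifying the final equivalence $T^{m-j}((T+2)^{i-1}-(T+1)^{i-1})=T^{i-1}((T+2)^{m-j}-(T+1)^{m-j})\Leftrightarrow i-1=m-j$, a step the paper asserts without detail.
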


\bigskip

A brief organization of this paper is as follows. 
In Section~\ref{sec:pre}, we recall the details of some notions, e.g., characteristic quasi-polynomials and period collapse. Moreover, we give another proof of Theorem~\ref{thm:Y} in the case of the Shi arrangement of type B in Section~\ref{sec-count-method} for the use of our proofs of the main theorem by modifying it. 
In Section~\ref{sec-quasi-poly-restxi=1}, the characteristic quasi-polynomial of the restriction of $\mathcal{B}_m$ by the hyperplane $\{x_i=1\}$ is determined (as well as the case of the hyperplane $\{x_i=0\}$). Namely, Section~\ref{sec-quasi-poly-restxi=1} gives a proof of the first two equalities of Theorem~\ref{thm:main}. 
Similarly, in Sections~\ref{sec-quasi-poly-restxi=xj}, \ref{sec-quasi-poly-restxi=xj+1}, \ref{sec-quasi-poly-restxi=-xj} and \ref{sec-quasi-poly-restxi=-xj+1}, the characteristic quasi-polynomial of the restriction of $\mathcal{B}_m$ by the hyperplane $\{x_i-x_j=0\}$, $\{x_i-x_j=1\}$, $\{x_i+x_j=0\}$ and $\{x_i+x_j=1\}$ is determined, respectively. 
Finally, in Section 8, we give proofs of Corollaries~\ref{period-collapse-1} and \ref{period-collapse-2}. 

\section*{Acknowledgements}
A.H. is partially supported by KAKENHI JP21KK0043 and JP23H00081.
N.N. is partially supported by KAKENHI JP20K14282.
\section{Preliminaries}\label{sec:pre}

\subsection{Characteristic quasi-polynomial}\label{sec:char}
A function $\chi:\mathbb{Z}_{>0}\rightarrow\mathbb{Z}$ is called a \textit{quasi-polynomial} if there exist a positive integer $\rho\in\mathbb{Z}_{>0}$ and polynomials $\chi^1(t),\chi^2(t),\dots,\chi^{\rho}(t)\in\mathbb{Z}[t]$ such that for $q\in\mathbb{Z}_{>0}$,
\begin{align*}
\chi(q)=
\begin{cases}
\chi^1(q)\quad&\text{if }q\equiv 1\  \mod{\rho},\\
\chi^2(q)\quad&\text{if }q\equiv 2\  \mod{\rho},\\
\quad\vdots&\quad\vdots\\
\chi^{\rho}(q)\quad&\text{if }q\equiv \rho\ \mod{\rho}.\\
\end{cases}
\end{align*}
The number $\rho$ is called a \textit{period} and the polynomials $\chi^1(t),\chi^2(t),\dots,\chi^{\rho}(t)$ are called the \textit{$k$-constituents}.
A quasi-polynomial $\chi$ with a period $\rho$ is said to have the \textit{gcd property} with respect to $\rho$ if the $k$-constituents $\chi^1(t),\chi^2(t),\dots,\chi^{\rho}(t)$ depend on $k$ only through $\gcd(\rho,k)$, i.e., $\chi^a(t)=\chi^b(t)$ if $\gcd(\rho,a)=\gcd(\rho,b)$.
We call $\chi$ \textit{monic} if all of $\chi^1(t),\chi^2(t),\dots,\chi^{\rho}(t)$ are monic polynomials. 

Let $q\in\mathbb{Z}_{>0}$ and define $\mathbb{Z}_q:=\mathbb{Z}/q\mathbb{Z}$.
For $a\in\mathbb{Z}$, let $[a]_q:=a+q\mathbb{Z}\in\mathbb{Z}_q$ be the $q$ reduction of $a$.
An $m\times n$ matrix $A=(a_{ij})\in\Mat_{m\times n}(\mathbb{Z})$ with integral entries and an integral vector $\bm{b}=(b_1,\dots,b_n)\in\mathbb{Z}^n$ define a hyperplane arrangement $\mathcal{A}_q=\{H_{1,q},\dots,H_{n,q}\}$ over $\mathbb{Z}_q$, where $H_{i,q}=\{(x_1,\dots,x_m)\in\mathbb{Z}_q\mid [a_{1,i}]_q x_1+\cdots+[a_{m,i}]_q x_m=[b_i]_q\}$ for any $i\in [n]$.
In addition, let $M(\mathcal{A}_q)$ be the complement of $\mathcal{A}_q$, i.e., 
\begin{align*}
M(\mathcal{A}_q):=\mathbb{Z}_q^m\setminus\bigcup_{i=1}^n H_{i,q}
=\{(x_1,\dots,x_m)\in\mathbb{Z}_q^m\mid [a_{1,i}]_q x_1+\cdots+[a_{m,i}]_q x_m\neq [b_i]_q\ \text{for any}\ i\in [n]\}.
\end{align*}
For $\emptyset\neq J\subseteq [n]$, the matrix $A_J\in\Mat_{m\times|J|}(\mathbb{Z})$ is defined by the submatrix of $A$ consisting of the columns indexed by $J$.
Set $\ell(J):=\rank(A_J)$.
Let $e_{J,1}|e_{J,2}|\cdots|e_{J,\ell(J)}$ be the elementary divisors of $A_J$.
Then we define $\rho_A:=\lcm\left(e_{J,\ell(J)}\,\middle|\,\emptyset\neq J\subseteq [n]\right)$.
\begin{Them}[Kamiya, Takemura, and Terao \cite{KTT08, KTT11}]\label{thm-charquasi-KTT}
The function $|M(\mathcal{A}_q)|$ is a monic quasi-polynomial with a period $\rho_{A}$ having the gcd property with respect to $\rho_{A}$.
\end{Them}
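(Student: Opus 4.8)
The plan is to count $|M(\mathcal{A}_q)|$ by inclusion--exclusion over the hyperplanes and then to evaluate each resulting term through the Smith normal form of the associated coefficient matrix. Writing $N_J(q)$ for the number of $x\in\mathbb{Z}_q^m$ lying on every hyperplane indexed by $J\subseteq[n]$ (so that $N_\emptyset(q)=q^m$), inclusion--exclusion gives
\[
|M(\mathcal{A}_q)|=\sum_{J\subseteq[n]}(-1)^{|J|}N_J(q),
\]
so the whole problem reduces to understanding each $N_J(q)$, i.e.\ the number of solutions in $\mathbb{Z}_q^m$ of the linear system $A_J^{\top}x\equiv b_J\pmod q$.

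The first real step is to compute $N_J(q)$ via elementary divisors. Choosing unimodular matrices $U,V$ so that $UA_J^{\top}V$ equals the Smith normal form $\daig(e_{J,1},\dots,e_{J,\ell(J)},0,\dots,0)$, the substitution $x=Vy$ and $c:=Ub_J$ decouples the system into the scalar congruences $e_{J,k}y_k\equiv c_k\pmod q$ for $k\le\ell(J)$, together with the conditions $0\equiv c_k\pmod q$ for $\ell(J)<k\le|J|$. Each solvable congruence $e_{J,k}y_k\equiv c_k$ contributes exactly $\gcd(e_{J,k},q)$ solutions, and each of the $m-\ell(J)$ free coordinates contributes a factor $q$; hence, when the system is consistent,
\[
N_J(q)=q^{\,m-\ell(J)}\prod_{k=1}^{\ell(J)}\gcd(e_{J,k},q),
\]
and $N_J(q)=0$ otherwise. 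The quasi-polynomial structure now follows: since every $e_{J,k}$ divides $e_{J,\ell(J)}$ and hence divides $\rho_A$, the function $q\mapsto\gcd(e_{J,k},q)$ is periodic with period dividing $\rho_A$ and satisfies $\gcd(e_{J,k},q)=\gcd(e_{J,k},\gcd(\rho_A,q))$, so it depends on $q$ only through $\gcd(\rho_A,q)$. Thus each consistent $N_J(q)$ is a quasi-polynomial of degree $m-\ell(J)$ with period $\rho_A$ and the gcd property, and these features are preserved under the signed sum. Monicity then drops out of the degree count: the only index with $\ell(J)=0$ is $J=\emptyset$, giving the top term $q^m$ in every residue class, while every other term has degree at most $m-1$; hence each constituent has leading term $q^m$ and is monic of degree $m$.

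The main obstacle is the affine (non-central) bookkeeping hidden in the consistency conditions. The requirements $\gcd(e_{J,k},q)\mid c_k$ for $k\le\ell(J)$ again depend on $q$ only through $\gcd(\rho_A,q)$ and are therefore compatible with the gcd property, but the conditions $q\mid c_k$ for $k>\ell(J)$ are not periodic: a subset $J$ with some such $c_k\neq 0$ is inconsistent for all large $q$ and contributes $0$ to the quasi-polynomial, even though it may contribute sporadically for small $q$ (this is exactly why the claimed identity is an identity of quasi-polynomials valid for all sufficiently large $q$). The careful part of the argument is to show that, once these rationally inconsistent subsets are discarded, the surviving consistency requirements combine with the $\gcd(e_{J,k},q)$ factors into a single well-defined monic quasi-polynomial of period $\rho_A$ with the gcd property; organizing the contributions through the intersection poset of the underlying rational arrangement (or, equivalently, homogenizing to reduce to the central case) is the cleanest way to keep this consistent, and I expect it to be where essentially all of the technical effort lies.
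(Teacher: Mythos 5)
The paper itself contains no proof of this statement: it is quoted as Theorem~\ref{thm-charquasi-KTT} directly from \cite{KTT08,KTT11}. Your proposal is correct and reconstructs essentially the original Kamiya--Takemura--Terao argument: inclusion--exclusion over subsets $J\subseteq[n]$, Smith normal form to evaluate each $N_J(q)$ as $q^{m-\ell(J)}\prod_{k}\gcd(e_{J,k},q)$ times a solvability indicator, the observation that both the factors $\gcd(e_{J,k},q)$ and the solvability conditions depend on $q$ only through $\gcd(\rho_A,q)$ (because $e_{J,k}\mid e_{J,\ell(J)}\mid\rho_A$), and the degree count $\ell(J)\geq 1$ for $J\neq\emptyset$ for monicity; you also correctly flag that rationally inconsistent subsets contribute sporadically for small $q$, which is precisely why the identity is one of eventual agreement and why this paper systematically writes ``$q\gg 0$'' in its own theorems. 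The one inaccuracy is your closing paragraph: no further technical effort, intersection-poset bookkeeping, or homogenization is needed (and coning is actually delicate modulo composite $q$, since nonzero residues need not be units), because for a rationally consistent $J$ the quantity $N_J(q)$ equals $q^{m-\ell(J)}$ times a factor that is a function of $\gcd(\rho_A,q)$ alone, so fixing $\gcd(\rho_A,q)=d$ and summing with signs immediately produces the constituent polynomials, their gcd property, and their common leading term $q^m$.
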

The quasi-polynomial in Theorem \ref{thm-charquasi-KTT} is called the \textit{characteristic quasi-polynomial} and $\rho_A$ is called the \textit{lcm period} of $|M(\mathcal{A}_q)|$.
In addition, the minimum value of the periods of $|M(\mathcal{A}_q)|$ is called the \textit{minimum period}, denoted by $\rho_\mathrm{min}$.

\subsection{Period collapse}\label{sec:pc}

The following is an interesting result for the minimum period and the lcm period.
\begin{Them}[Higashitani, Tran, and Yoshinaga \cite{HTY2023}]\label{thm-lcm=minimum}
We have the following.
\begin{itemize}
\item[$(1)$]\ For any $s,p\in\mathbb{Z}_{>0}$ with $s|p$, there exist a matrix $A$ and a vector $\bm{b}\neq\bm{0}$ such that 
$\rho_\mathrm{min}=s$ and $\rho_A=p$.
Especially, $\rho_\mathrm{min}<\rho_A$ holds.
\item[$(2)$]\ If $\bm{b}=\bm{0}$ (this case is called the central case), then the lcm period coincides with the minimum period, that is, $\rho_\mathrm{min}=\rho_A$.
\end{itemize}
\end{Them}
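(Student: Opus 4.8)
The plan is to treat the two parts separately, handling the self-contained central case $(2)$ first and then the existence statement $(1)$. Throughout I would use Theorem~\ref{thm-charquasi-KTT}, so that $|M(\mathcal{A}_q)|$ is already known to be a monic quasi-polynomial with the gcd property with respect to $\rho_A$; the only issue is to pin down the \emph{minimum} period.

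For part $(2)$ the starting point is the Möbius/inclusion--exclusion expansion of the complement over $\mathbb{Z}_q$. Writing $L$ for the intersection poset of $\mathcal{A}_q$ (with bottom element $\hat 0=\mathbb{Z}_q^m$) and $\mu$ for its Möbius function, one has
\[
|M(\mathcal{A}_q)|=\sum_{X\in L}\mu(\hat 0,X)\,|X|,
\]
and in the central case ($\bm{b}=\bm{0}$) each flat $X$ is a subgroup with $|X|=q^{\dim X}\prod_{i}\gcd(e^{X}_{i},q)$, where the $e^{X}_{i}$ are the elementary divisors of a matrix cutting out $X$ and $\dim X$ is the dimension of the corresponding rational flat. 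Fixing a residue class $q\equiv r\pmod{\rho_A}$ and using that $\gcd(e,q)$ depends only on $q$ modulo $e\mid\rho_A$, the $r$-constituent becomes the polynomial
\[
\chi^{r}(t)=\sum_{X\in L}\mu(\hat 0,X)\Big(\prod_i\gcd(e^{X}_{i},r)\Big)\,t^{\dim X}.
\]
I would then show this expression genuinely depends on $\gcd(\rho_A,r)$ through every prime power dividing $\rho_A$. Suppose for contradiction that the minimum period $s$ is a proper divisor of $\rho_A$. Then some prime power $\ell^{k}$ satisfies $\ell^{k}\parallel\rho_A$ while $v_\ell(s)<k$. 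Localizing at $\ell$ (replacing each $\gcd(e,q)$ by $\ell^{\min(v_\ell(e),v_\ell(q))}$ via the gcd property and the Chinese Remainder Theorem) reduces the problem to a central arrangement over $\mathbb{Z}/\ell^{k}$, for which I must exhibit two residues $r\equiv r'\pmod s$ with $\chi^{r}\ne\chi^{r'}$. The key point is that there is a flat of maximal codimension realizing an elementary divisor divisible by $\ell^{k}$, whose monomial $t^{\dim X}$ carries a coefficient strictly varying with $\min(k,v_\ell(q))$; because the Möbius coefficients alternate in sign along the poset and this flat sits at an extreme of the dimension filtration, no contribution from the other flats can cancel that variation. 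This forces $\chi^{r}\neq\chi^{r'}$ and hence $\rho_{\mathrm{min}}=\rho_A$.

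For part $(1)$, given $s\mid p$ I would build an explicit non-central example and compute its two periods directly. A convenient strategy is to start from a rank-one block $\{ax=b\}$ over $\mathbb{Z}_q$, whose complement count is $q-\#\{x:ax\equiv b\}$, and to assemble several such blocks (in one variable, or as a direct sum in several variables, where the complement counts multiply) so that the $\lcm$ of the moduli forces $\rho_A=p$, while the chosen right-hand sides $b$ make the inclusion--exclusion count collapse to a function of period exactly $s$. The extended Shi arrangement of type B already realizes $(s,p)=(1,2)$, as observed in the introduction, and the general case is obtained by scaling this mechanism. Checking $\rho_A=p$ is a routine elementary-divisor computation; the substantive work is to verify both that $s$ is a period (the constituents genuinely repeat) and that no proper divisor of $s$ works, the latter being the same non-cancellation argument as in part $(2)$.

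The hard part in both directions is identical: ruling out \emph{accidental} period collapse, i.e. proving that the constituents one writes down are genuinely distinct and cannot be merged into a quasi-polynomial of smaller period. For $(2)$ this is the non-collapse argument at each prime power; for $(1)$ it is the verification that the engineered period is exactly $s$ and not smaller. Controlling these cancellations—tracking the signs of the Möbius coefficients together with the dimension filtration of the flats—is where the real difficulty lies, whereas the existence of the Möbius decomposition and the elementary-divisor bookkeeping are routine.
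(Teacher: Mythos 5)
First, a point of comparison: the paper does not prove Theorem~\ref{thm-lcm=minimum} at all --- it is quoted from \cite{HTY2023} as a known result --- so your attempt has to be judged on its own merits. Judged that way, it is a plan rather than a proof, and at the two places you yourself identify as ``where the real difficulty lies'' the argument you sketch would not close the gap. For part $(2)$, the decomposition you start from is not available: over $\mathbb{Z}_q$ the intersection poset depends on $q$, and M\"obius inversion over the \emph{rational} intersection lattice does not compute $|M(\mathcal{A}_q)|$, because a point of $\mathbb{Z}_q^m$ can lie on a non-closed set of hyperplanes (e.g.\ for $\{x-y=0\},\{x+y=0\},\{x=0\}$ the point $(2,2)\in\mathbb{Z}_4^2$ lies on the first two but not the third, although rationally their intersection is contained in the third). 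The expansion that is valid uniformly in $q$, and which underlies Theorem~\ref{thm-charquasi-KTT}, is the sum over all subsets $J\subseteq[n]$, giving constituents $\chi^r(t)=\sum_{J}(-1)^{|J|}\bigl(\prod_i\gcd(e_{J,i},r)\bigr)t^{m-\ell(J)}$; there the sign $(-1)^{|J|}$ is \emph{not} constant among subsets with the same rank $\ell(J)$, so your key claim that ``the M\"obius coefficients alternate in sign \dots\ no contribution from the other flats can cancel that variation'' has nothing supporting it: contributions to a fixed power of $t$ occur with both signs, and excluding cancellation among them is precisely the content of the theorem, not a remark. (If the rational geometric lattice were usable, Rota's sign theorem together with your monotonicity $\gcd(e,d')\mid\gcd(e,d)$ for $d'\mid d$ would indeed finish the argument; but it is not, and the known proof of the central case goes through different machinery, namely the identification of the constituents with characteristic polynomials of associated toric/abelian Lie group arrangements in the spirit of \cite{Liu-Tran-Yoshinaga21}.)

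For part $(1)$ the gap is even more basic: no example is ever constructed. ``Assemble rank-one blocks so that the lcm of the moduli forces $\rho_A=p$ while the right-hand sides make the count collapse to period exactly $s$'' restates the goal, and ``the general case is obtained by scaling'' the type-B Shi example realizing $(s,p)=(1,2)$ is not a construction: rescaling multiplies all elementary divisors (hence both periods) simultaneously, so it can only reach pairs of the form $(c,2c)$, not an arbitrary pair $s\mid p$. Moreover, tuning right-hand sides is genuinely delicate: for the one-dimensional arrangement $\{x=0\},\{px=s\}$ one has $\rho_A=p$ and, for $q\gg0$, $|M(\mathcal{A}_q)|=q-1-\gcd(p,q)$ when $\gcd(p,q)\mid s$ and $|M(\mathcal{A}_q)|=q-1$ otherwise; with $(s,p)=(2,4)$ the constituents for $q\equiv 2$ and $q\equiv 0 \pmod 4$ differ, so the minimum period is $4$, not $2$. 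So the obvious ``block'' construction does not do what you want, and deciding which data do realize a prescribed minimum period $s$ --- together with proving that nothing smaller works, which you again defer to the unproven non-cancellation argument of part $(2)$ --- is exactly the nontrivial content of \cite{HTY2023}.
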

From Theorem \ref{thm-lcm=minimum}, the question arises as to when the minimum period and the lcm period coincide.
To consider this question, we define period collapse as follows.
\begin{Defi}
\textit{Period collapse} occurs in $|M(\mathcal{A}_q)|$ if $\rho_\mathrm{min}<\rho_A$.
\end{Defi}
Note that period collapse does not occur in the central case as Theorem~\ref{thm-lcm=minimum} (2) claims.
We give two remarks about the characteristic quasi-polynomial and the lcm period for the Shi arrangements.
\begin{itemize}
\item An $m\times m$ matrix $P$ with integral entries is said to be unimodular if $\det(P)\in\{1,-1\}$.
The characteristic quasi-polynomial and the lcm period are invariant under the action of multiplying a unimodular matrix from the left.
In \cite{HTY2023}, the coefficients of the matrix defining the Shi arrangement are written as the linear combinations of the simple root basis. 
On the other hand, the coefficients of the matrix defining the Shi arrangement in this paper are the linear combination of the standard basis.
Since these definitions are transformed by the unimodular matrix, we are considering the same object.
\item It is easy to see that the lcm period of the matrix $A$ defining $\mathcal{B}_m$ and matrices obtained by removing columns from $A$ is always less than or equal to $2$.
In other words, when computing the characteristic quasi-polynomial of the Shi arrangement of type B or its deletions, we only need to consider the cases that $q$ is even or odd.
In addition, when the lcm period is $2$, period collapse occurs in $|M(\mathcal{A}_q)|$ if and only if $|M(\mathcal{A}_q)|$ is a polynomial.
\end{itemize}

\subsection{Equivalence of deletions and restrictions}\label{subsec:equiv}
Let $A=(a_{ij})$ be an $m\times n$ matrix with integral entries and $\bm{b}=(b_1,\dots,b_n)$ an integral vector. 
Let $\mathcal{A}$ be the hyperplane arrangement defined by $(A,\bm{b})$, and let $H_q=\{(x_1,\dots,x_m)\in\mathbb{Z}_q\mid [a_{1}]_q x_1+\cdots+[a_{m}]_q x_m=[b]_q\}\in\mathcal{A}_q$.
For $H_q\in\mathscr{A}_q$, we call $\mathscr{A}_q\setminus\{H_q\}$ the \textit{deletion} and
$\mathscr{A}_q^{H_q}:=\left\{H_q\cap H_{q}^{\prime}\,\middle|\,H_{q}^{\prime}\in\mathscr{A}_q\setminus\{H_q\},\ H_q\cap H_{q}^{\prime}\neq\emptyset\right\}$ the \textit{restriction}.
If there exists a unimodular matrix $P\in\Mat_{m\times m}(\mathbb{Z})$ such that $P\,^t([a_1]_q,\dots,[a_m]_q)=\,^t([1]_q,[0]_q,\dots,[0]_q)$, then the following formula holds: 
\begin{align*}
    |M(\mathcal{A}_q)|=\left|M\left(\mathcal{A}_q\setminus \{H_{q}\}\right)\right| - \left|M\left(\mathcal{A}_q^{H_{q}}\right)\right|. 
\end{align*}
See, e.g., \cite[Corollary 4.11]{Liu-Tran-Yoshinaga21} or \cite[Corollary 4.2]{MN2024}. 
This equality says that if period collapse occurs in two of $|M(\mathcal{A}_q)|$, $\left|M\left(\mathcal{A}_q\setminus \{H_{q}\}\right)\right|$ and $\left|M\left(\mathcal{A}_q^{H_{q}}\right)\right|$, then period collapse occurs also in the remaining one. 
In the case of $\mathcal{A}=\mathcal{B}_m$, the lcm periods is $2$, and any hyperplane $H\in\mathcal{A}$ is transferred to $\{x_1=0\}$ by unimodular transformation.
Since we know that $\left|M\left((\mathcal{B}_m)_q\right)\right|$ is a polynomial by Theorem~\ref{thm:Y}, the fact that $\left|M\left((\mathcal{B}_m)_q\setminus \{H_{q}\}\right)\right|$ is a polynomial and the fact that $\left|M\left((\mathcal{B}_m^H)_q\right)\right|$ is a polynomial are equivalent. 
Note that since there is an index $i$ such that the coefficient of $x_i$ is $1$ in the defining equation of $H\in\mathcal{B}_m$, we have $\left|M\left(\left(\mathcal{B}_m^H\right)_q\right)\right|=\left|M\left((\mathcal{B}_m)_q^{H_q}\right)\right|$, where $H_q$ is the $q$-reduction of $H$ and $\mathcal{B}_m^H$ is the restriction over $\mathbb{R}^m$. 
Therefore, by checking whether the characteristic quasi-polynomial of a restriction is a polynomial, we can check whether the characteristic quasi-polynomial of a deletion is a polynomial. 

\subsection{Counting method of the elements in the complement of the Shi arrangement}\label{sec-count-method}
In the rest of this section, let $\mathcal{A}=\mathcal{B}_m$.
We recall that the characteristic quasi-polynomial of the Shi arrangement of type B is computed as follows:
\begin{align*}
|M(\mathcal{A}_q)|=(q-2m)^m\ \text{for any}\ q\gg 0.
\end{align*}
Athanasiadis \cite[Theorem 3.10 and Theorem 3.14]{Athanasiadis1996} gave a combinatorial method to count the the elements in $M(\mathcal{A}_q)$ and prove the equality when $q$ is a prime.
In this paper, we modify this method in several ways to compute the characteristic quasi-polynomial of the arrangement formed by deleting a single hyperplane from the Shi arrangement.
In particular, we extend this method to the case when $q$ is even as well as the case when $q$ is odd.
For this reason, we first describe the details of this method.

Recall that the complement $M(\mathcal{A}_q)$ is the set of $(x_1,\dots,x_m)\in\mathbb{Z}_q^m$ satisfying the following conditions:
\begin{align*}
&x_s\neq 0,\ x_s\neq 1\ (s\in [m]),\\
&x_s\neq x_t\ (s,t\in [m],\ s\neq t),\\
&x_s\neq x_t+1\ (s,t\in [m],\ s<t),\\
&x_s\neq -x_t,\ x_s\neq -x_t+1\ (s,t\in [m],\ s\neq t).
\end{align*}

\subsubsection{The case when $q$ is odd}\label{sec-count-method-odd}
In this subsection, we assume that $q$ is odd while we describe the counting method by Athanasiadis \cite{Athanasiadis1996}.
To aid understanding, at each step we add an example for $m=5$ and $q=15$.
\begin{itemize}
\item[Step 1.] Prepare $q-2m+1$ boxes side by side, $\displaystyle \frac{q+1}{2}-m$ on the upper side and $\displaystyle \frac{q+1}{2}-m$ on the lower side.
\begin{center}
\scalebox{0.8}{
\begin{tikzpicture}
\draw [very thick] (0,0) rectangle (4,1); \draw [very thick] (5,0) rectangle (9,1); \draw [very thick] (10,0) rectangle (14,1);
\draw [very thick] (0,1.5) rectangle (4,2.5); \draw [very thick] (5,1.5) rectangle (9,2.5); \draw [very thick] (10,1.5) rectangle (14,2.5);
\end{tikzpicture}
}
\end{center}
\item[Step 2.] Place each of the numbers $1,\dots,m$ corresponding to the indices of $x_1,\dots,x_m$ in one of $q-2m$ boxes, avoiding the upper left box.
\begin{center}
\scalebox{0.8}{
\begin{tikzpicture}[main/.style = {draw, circle, very thick}] 
\draw [very thick] (0,0) rectangle (4,1); \draw [very thick] (5,0) rectangle (9,1); \draw [very thick] (10,0) rectangle (14,1);
\draw [very thick] (0,1.5) rectangle (4,2.5); \draw [very thick] (5,1.5) rectangle (9,2.5); \draw [very thick] (10,1.5) rectangle (14,2.5);
\node(1) at (5.5,2) {{\large $2$}}; \node(2) at (10.5,0.5) {{\large $4$}}; \node(3) at (0.5,0.5) {{\large $5$}}; \node(4) at (5.5,0.5) {{\large $1$}}; \node(5) at (6.5,0.5) {{\large $3$}};
\end{tikzpicture}
}
\end{center}

\item[Step 3.] Place unlabeled circles at the left edges of all boxes except the lower left box.
The number in each of the upper boxes is placed in ascending order from left to right next to the unlabeled circle.
Rewrite each number as a circle labeled with the same number.
Place the unlabeled circles on the opposite side of the labeled circles arranged in this manner.
Then the number in each of the lower boxes is placed in descending order, starting from next of unlabeled circles placed in this way.
Also rewrite each number as a circle labeled with the same number and place the unlabeled circles on the opposite side.
\begin{center}
\scalebox{0.8}{
\begin{tikzpicture}[main/.style = {draw, circle, very thick}] 
\draw [very thick] (0,0) rectangle (4,1); \draw [very thick] (5,0) rectangle (9,1); \draw [very thick] (10,0) rectangle (14,1);
\draw [very thick] (0,1.5) rectangle (4,2.5); \draw [very thick] (5,1.5) rectangle (9,2.5); \draw [very thick] (10,1.5) rectangle (14,2.5);
\node[main](1) at (0.5,2) {{\color{white}\large $0$}}; \node[main](2) at (1.5,2) {{\color{white}\large $0$}};
\node[main](3) at (5.5,2) {{\color{white}\large $0$}}; \node[main](4) at (6.5,2) {{\large $2$}};\node[main](5) at (7.5,2) {{\color{white}\large $0$}}; \node[main](6) at (8.5,2) {{\color{white}\large $0$}};
\node[main](7) at (10.5,2) {{\color{white}\large $0$}}; \node[main](8) at (11.5,2) {{\color{white}\large $0$}};
\node[main](9) at (1.5,0.5) {{\large $5$}};
\node[main](10) at (5.5,0.5) {{\color{white}\large $0$}}; \node[main](11) at (6.5,0.5) {{\color{white}\large $0$}};\node[main](12) at (7.5,0.5) {{\large $3$}}; \node[main](13) at (8.5,0.5) {{\large $1$}};
\node[main](14) at (10.5,0.5) {{\color{white}\large $0$}}; \node[main](15) at (11.5,0.5) {{\large $4$}};
\end{tikzpicture}
}
\end{center}

\item[Step 4.] Arrange the circles clockwisely starting from the circle at the left end of the upper left box.
In the following, the black filled circle corresponds to the circle at the left end of the upper left box.

\begin{center}
\scalebox{0.8}{
\begin{tikzpicture}[every edge quotes/.style = {auto, font=\footnotesize, sloped}]
\begin{scope}[rotate=90, transform shape]
\foreach \i in {1,...,15} {
    \path[draw=black, very thick] (0,3) ++({360/15 * (\i - 1)}:2.5) coordinate (n\i)
        circle[radius=3mm];
    \node (n\i) []  at (n\i) {};
}

\node[circle, radius=3mm, fill=black] at (n1) {0};
\node[font=\bfseries, rotate=-90] at (n6) {1};
\node[font=\bfseries, rotate=-90] at (n13) {2};
\node[font=\bfseries, rotate=-90] at (n5) {3};
\node[font=\bfseries, rotate=-90] at (n8) {4};
\node[font=\bfseries, rotate=-90] at (n2) {5};

\end{scope}
\end{tikzpicture}
}
\end{center}

\item[Step 5.] The first circle placed in Step 4 (the black circle above) corresponds to $0$, and other circles clockwisely correspond to the elements in $\{1,\dots,q-1\}$ in ascending order.
Create a tuple whose $i$-th entry is the element in $\{1,\dots,q-1\}$ corresponding to the circle labeled with $i$.
Then we obtain an element $(x_1,\dots,x_m)\in\mathbb{Z}_q^m$.
In the case of the example above, $(x_1,x_2,x_3,x_4,x_5)=(10,3,11,8,14)$.
We can also consider the assigninment of the elements in $\mathbb{Z}_q$ to the boxes and circles as follows.
\begin{center}
\scalebox{0.8}{
\begin{tikzpicture}[main/.style = {draw, circle, very thick}] 
\draw [very thick] (0,0) rectangle (4,1); \draw [very thick] (5,0) rectangle (9,1); \draw [very thick] (10,0) rectangle (14,1);
\draw [very thick] (0,1.5) rectangle (4,2.5); \draw [very thick] (5,1.5) rectangle (9,2.5); \draw [very thick] (10,1.5) rectangle (14,2.5);
\node[main](1) at (0.5,2) {{\color{white}\large $0$}}; \node[main](2) at (1.5,2) {{\color{white}\large $0$}};
\node[main](3) at (5.5,2) {{\color{white}\large $0$}}; \node[main](4) at (6.5,2) {{\large $2$}};\node[main](5) at (7.5,2) {{\color{white}\large $0$}}; \node[main](6) at (8.5,2) {{\color{white}\large $0$}};
\node[main](7) at (10.5,2) {{\color{white}\large $0$}}; \node[main](8) at (11.5,2) {{\color{white}\large $0$}};
\node[main](9) at (1.5,0.5) {{\large $5$}};
\node[main](10) at (5.5,0.5) {{\color{white}\large $0$}}; \node[main](11) at (6.5,0.5) {{\color{white}\large $0$}};\node[main](12) at (7.5,0.5) {{\large $3$}}; \node[main](13) at (8.5,0.5) {{\large $1$}};
\node[main](14) at (10.5,0.5) {{\color{white}\large $0$}}; \node[main](15) at (11.5,0.5) {{\large $4$}};
\node(0) at (0.5,3) {{\color{red}\large $0$}}; \node(1) at (1.5,3) {{\color{red}\large $1$}}; \node(2) at (5.5,3) {{\color{red}\large $2$}}; \node(3) at (6.5,3) {{\color{red}\large $3$}}; \node(4) at (7.5,3) {{\color{red}\large $4$}}; \node(5) at (8.5,3) {{\color{red}\large $5$}}; \node(6) at (10.5,3) {{\color{red}\large $6$}}; \node(7) at (11.5,3) {{\color{red}\large $7$}};
\node(8) at (11.5,-0.5) {{\color{red}\large $8$}}; \node(9) at (10.5,-0.5) {{\color{red}\large $9$}}; \node(10) at (8.5,-0.5) {{\color{red}\large $10$}}; \node(11) at (7.5,-0.5) {{\color{red}\large $11$}}; \node(12) at (6.5,-0.5) {{\color{red}\large $12$}}; \node(13) at (5.5,-0.5) {{\color{red}\large $13$}}; \node(14) at (1.5,-0.5) {{\color{red}\large $14$}};
\end{tikzpicture}
}
\end{center}

\end{itemize}

The boxes and circles in Step 3 can be identified with the circles in Step 4, and they have the following properties:

\begin{itemize}
\item The circles corresponding to the elements $0,1\in\mathbb{Z}_q$ have no labels since we avoid putting the numbers $1,\dots,m$ in the upper left box.
This corresponds to the condition $x_i\neq 0$ and $x_i\neq 1$ for any $i\in [m]$.
\item Each circle is labeled with at most one number.
This corresponds to the condition that $x_i\neq x_j$ for any $i,j\in [m]$ with $i\neq j$.
\item There is always an unlabeled circle on the opposite side of the labeled circle.
This corresponds to the condition that $x_i\neq -x_j$ for any $i,j\in [m]$ with $i\neq j$.
\item The circle preceding the circle labeled with $i$ in a clockwise direction does not have a label greater than $i$.
This corresponds to the condition that $x_i\neq x_j+1$ for any $i,j\in [m]$ with $i<j$.
\item The clockwise next circle from the opposite circle of the labeled circle with $j$ is either an unlabeled circle or the circle labeled with $j$ (i.e., the same circle as the original).
This corresponds to the condition that $x_j\neq -x_i+1$ for any $i,j\in [m]$ with $i\neq j$.
\end{itemize}
From the conditions above, the tuple $(x_1,\dots,x_m)\in\mathbb{Z}_q^m$ obtained in Step 5 is contained in $M(\mathcal{A}_q)$.
Also, following this procedure in reverse, taking an element in $M(\mathcal{A}_q)$ corresponds to placing the numbers $1,\dots,m$ in the $q-2m$ boxes in Step 2.
Therefore we have that $|M(\mathcal{A}_q)|=(q-2m)^m$.

\subsubsection{The case when $q$ is even}\label{sec-count-method-even}
In this subsection, we extend the counting method to the case when $q$ is even, so we assume that $q$ is even.
We count the number of elements $(x_1,\dots,x_m)\in M(\mathcal{A}_q)$ by dividing the cases by whether there exists an index $k$ such that $\displaystyle x_k=\frac{q}{2}$ or not.

(i)\ Suppose that there is no index $k\in[m]$ such that $\displaystyle x_k=\frac{q}{2}$.
Prepare $q-2m$ boxes, $\displaystyle \frac{q}{2}-m$ on the upper side and $\displaystyle \frac{q}{2}-m$ on the lower side, and one circle corresponding to the element $\displaystyle \frac{q}{2}\in\mathbb{Z}_q$ on the right side of the boxes.
Put each of the numbers $1,\dots,m$ into one of the $q-2m-1$ boxes, except the upper left box.
We create $(x_1,\dots,x_m)$ by the same procedure described in Section \ref{sec-count-method-odd}.
Then the element $(x_1,\dots,x_m)$ created in this maner one-to-one corresponds to the element in $M(\mathcal{A}_q)$ that there is no index $k\in[m]$ such that $\displaystyle x_k=\frac{q}{2}$.
Therefore, in this case, the desired number is $(q-2m-1)^m$.
For example, if $m=5$ and $q=16$, then $(x_1,x_2,x_3,x_4,x_5)=(2,9,10,13,11)$ corresponds to the following boxes and circles:
\begin{center}
\scalebox{0.8}{
\begin{tikzpicture}[main/.style = {draw, circle, very thick}] 
\draw [very thick] (0,0) rectangle (4,1); \draw [very thick] (5,0) rectangle (9,1); \draw [very thick] (10,0) rectangle (14,1);
\draw [very thick] (0,1.5) rectangle (4,2.5); \draw [very thick] (5,1.5) rectangle (9,2.5); \draw [very thick] (10,1.5) rectangle (14,2.5);
\node[main](0) at (15,1.25) {{\color{white}\large $0$}};
\node[main](1) at (0.5,2) {{\color{white}\large $0$}};
\node[main](2) at (5.5,2) {{\color{white}\large $0$}}; \node[main](3) at (6.5,2) {{\large $1$}};\node[main](4) at (7.5,2) {{\color{white}\large $0$}};
\node[main](5) at (10.5,2) {{\color{white}\large $0$}}; \node[main](6) at (11.5,2) {{\color{white}\large $0$}}; \node[main](7) at (12.5,2) {{\color{white}\large $0$}}; \node[main](8) at (13.5,2) {{\color{white}\large $0$}};
\node[main](9) at (5.5,0.5) {{\color{white}\large $0$}}; \node[main](10) at (6.5,0.5) {{\color{white}\large $0$}}; \node[main](11) at (7.5,0.5) {{\large $4$}};
\node[main](12) at (10.5,0.5) {{\color{white}\large $0$}}; \node[main](13) at (11.5,0.5) {{\large $5$}}; \node[main](14) at (12.5,0.5) {{\large $3$}}; \node[main](12) at (13.5,0.5) {{\large $2$}};
\node(0) at (0.5,3) {{\color{red}\large $0$}}; \node(1) at (5.5,3) {{\color{red}\large $1$}}; \node(2) at (6.5,3) {{\color{red}\large $2$}}; \node(3) at (7.5,3) {{\color{red}\large $3$}}; \node(4) at (10.5,3) {{\color{red}\large $4$}}; \node(5) at (11.5,3) {{\color{red}\large $5$}}; \node(6) at (12.5,3) {{\color{red}\large $6$}}; \node(7) at (13.5,3) {{\color{red}\large $7$}};
\node(8) at (15,2.25) {{\color{red}\large $8$}}; 
\node(9) at (13.5,-0.5) {{\color{red}\large $9$}}; \node(10) at (12.5,-0.5) {{\color{red}\large $10$}}; \node(11) at (11.5,-0.5) {{\color{red}\large $11$}}; \node(12) at (10.5,-0.5) {{\color{red}\large $12$}}; \node(13) at (7.5,-0.5) {{\color{red}\large $13$}}; \node(14) at (6.5,-0.5) {{\color{red}\large $14$}}; \node(15) at (5.5,-0.5) {{\color{red}\large $15$}};
\end{tikzpicture}
}
\end{center}

(i\hspace{-0.5mm}i)\ Suppose that there exists an index $k\in[m]$ such that $\displaystyle x_k=\frac{q}{2}$.
Create boxes and circles according to the following rules and take the corresponding element $(x_1,\dots,x_m)\in M(\mathcal{A}_q)$.
\begin{itemize}
\item Prepare $q-2m+2$ boxes, $\displaystyle \frac{q}{2}-(m-1)$ on the upper side and $\displaystyle \frac{q}{2}-(m-1)$ on the lower side since $m-1$ numbers make $2(m-1)$ circles in the boxes, and one circle labeled with $k$ on the right side of the boxes.
\item Place each of the numbers $1,\dots,k-1,k+1,\dots,m$ in one of $q-2m+1$ boxes, except the upper left box.
For example, if $m=5$ and $q=16$, then $(x_1,x_2,x_3,x_4,x_5)=(14,15,8,5,10)$ corresponds to the following boxes and circles:
\begin{center}
\scalebox{0.8}{
\begin{tikzpicture}[main/.style = {draw, circle, very thick}] 
\draw [very thick] (0,0) rectangle (3,1); \draw [very thick] (4,0) rectangle (7,1); \draw [very thick] (8,0) rectangle (11,1); \draw [very thick] (12,0) rectangle (15,1);
\draw [very thick] (0,1.5) rectangle (3,2.5); \draw [very thick] (4,1.5) rectangle (7,2.5); \draw [very thick] (8,1.5) rectangle (11,2.5); \draw [very thick] (12,1.5) rectangle (15,2.5);
\node[main](0) at (16,1.25) {{\large $3$}};
\node[main](1) at (0.5,2) {{\color{white}\large $0$}}; \node[main](2) at (1.5,2) {{\color{white}\large $0$}}; \node[main](3) at (2.5,2) {{\color{white}\large $0$}};
\node[main](4) at (4.5,2) {{\color{white}\large $0$}};
\node[main](5) at (8.5,2) {{\color{white}\large $0$}}; \node[main](6) at (9.5,2) {{\large $4$}}; \node[main](7) at (10.5,2) {{\color{white}\large $0$}};
\node[main](8) at (12.5,2) {{\color{white}\large $0$}};
\node[main](9) at (1.5,0.5) {{\large $2$}}; \node[main](10) at (2.5,0.5) {{\large $1$}};
\node[main](11) at (4.5,0.5) {{\color{white}\large $0$}};
\node[main](12) at (8.5,0.5) {{\color{white}\large $0$}}; \node[main](13) at (9.5,0.5) {{\color{white}\large $0$}}; \node[main](14) at (10.5,0.5) {{\large $5$}};
\node[main](15) at (12.5,0.5) {{\color{white}\large $0$}};
\end{tikzpicture}
}
\end{center}
\item The lower right box does not contain any number.
Indeed, if there is a labeled circle in the lower right box, then the clockwise next circle from the opposite circle of the rightmost circle labeled with $i$ is the circle labeled with $k$.
In other words, $x_k=-x_i+1$ holds, which is contradiction.
\begin{center}
\scalebox{0.8}{
\begin{tikzpicture}[main/.style = {draw, circle, very thick}] 
\draw [very thick] (0,0) rectangle (4,1); \draw [very thick] (0,1.5) rectangle (4,2.5); \node[main](0) at (5,1.25) {{\large $k$}};
\node[main](2) at (3.5,2) {{\color{white}\large $0$}};
\node[main](4) at (3.5,0.5) {{\large $i$}};
\end{tikzpicture}
}
\end{center}
\item The upper right box does not contain any number larger than $k$.
Indeed, if the rightmost label on the upper right box is $j$ with $k<j$, then the circle clockwise preceding the circle labeled with $k$ has the label greater than $k$.
In other words, $x_k=x_j+1$ holds, which is contradiction.
\begin{center}
\scalebox{0.8}{
\begin{tikzpicture}[main/.style = {draw, circle, very thick}] 
\draw [very thick] (0,0) rectangle (4,1); \draw [very thick] (0,1.5) rectangle (4,2.5); \node[main](0) at (5,1.25) {{\large $k$}};
\node[main](2) at (3.5,2) {{\large $j$}};
\node[main](4) at (3.5,0.5) {{\color{white}\large $0$}};
\end{tikzpicture}
}
\end{center}
\end{itemize}
There are $q-2m$ boxes that can contain the numbers $1.\dots,k-1$ and $q-2m-1$ boxes that can contain the numbers $k+1,\dots,m$.
Therefore the desired number is $(q-2m-1)^{m-k}(q-2m)^{k-1}$ for each fixed $k\in [m]$.

Here, we claim the following lemma, which will be often used in the calculation of characteristic quasi-polynomials. 
\begin{Lem}\label{hodai}
Given non-negative integers $a,b$ with $a\leq b$, we have 
\[
\sum_{k=a}^b X^{b-k}(X+1)^{k-a}=\sum_{k=a}^b (X+1)^{b-k}X^{k-a}=(X+1)^{b-a+1}-X^{b-a+1}. 
\]
\end{Lem}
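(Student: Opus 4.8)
The plan is to reduce the two displayed sums to a single standard polynomial identity. First I would observe that the two sums in the statement are literally the same: substituting $k\mapsto a+b-k$ in the index of summation sends $X^{b-k}(X+1)^{k-a}$ to $X^{k-a}(X+1)^{b-k}$ and merely permutes the summands, so $\sum_{k=a}^b X^{b-k}(X+1)^{k-a}=\sum_{k=a}^b (X+1)^{b-k}X^{k-a}$. Thus it suffices to evaluate either one of them in closed form.

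Next, setting $n:=b-a\ge 0$ and reindexing by $j:=k-a$ (so $b-k=n-j$), the first sum becomes $\sum_{j=0}^{n}X^{n-j}(X+1)^{j}$, which after reversing the index of summation equals $\sum_{j=0}^{n}(X+1)^{n-j}X^{j}$. The cleanest way to finish is to invoke the elementary factorization $Y^{n+1}-Z^{n+1}=(Y-Z)\sum_{j=0}^{n}Y^{n-j}Z^{j}$, valid in $\mathbb{Z}[Y,Z]$; specializing $Y:=X+1$ and $Z:=X$ gives $Y-Z=1$, whence $\sum_{j=0}^{n}(X+1)^{n-j}X^{j}=(X+1)^{n+1}-X^{n+1}$. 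Relabeling $n+1=b-a+1$ then yields the asserted closed form. Equivalently, one may run a one-line induction on $n$: the base case $n=0$ gives $1=(X+1)-X$, and the inductive step uses $X\bigl((X+1)^{n}-X^{n}\bigr)+(X+1)^{n}=(X+1)^{n+1}-X^{n+1}$.

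The argument is entirely formal, so there is essentially no obstacle of substance; the only point deserving a little care is that the identity must be read as an equality of polynomials in $\mathbb{Z}[X]$ (equivalently in $\mathbb{Z}[Y,Z]$ before specialization). For this reason I would deliberately avoid the tempting geometric-series derivation $X^{n}\sum_{j=0}^{n}\bigl((X+1)/X\bigr)^{j}$, which divides by $X$ and is only meaningful after clearing denominators. Invoking the factorization $Y^{n+1}-Z^{n+1}=(Y-Z)\sum_{j=0}^{n}Y^{n-j}Z^{j}$ sidesteps this entirely, since it is a genuine polynomial identity and the specialization $Y-Z=1$ removes the denominator automatically, delivering both equalities of the lemma at once.
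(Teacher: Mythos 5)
Your proof is correct and is essentially the same argument as the paper's: the paper also inserts the factor $(X+1-X)=1$ and applies the factorization $(Y-Z)\sum_{k=0}^{n}Y^{n-k}Z^{k}=Y^{n+1}-Z^{n+1}$ with $Y=X+1$, $Z=X$. Your write-up is merely more explicit about the reindexing and the equality of the two sums, which the paper leaves implicit.
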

\begin{proof}
We obtain this as follows: 
\begin{align*}
    \sum_{k=a}^b (X+1)^{b-k}X^{k-a}=(X+1-X)\sum_{k=0}^{b-a} (X+1)^{b-a-k}X^{k} 
    =(X+1)^{b-a+1}-X^{b-a+1}.
\end{align*}
\end{proof}
From the discussion above, by applying Lemma~\ref{hodai}, we have that
\begin{align*}
\left|M(\mathcal{A}_q)\right|&=(q-2m-1)^m+\sum_{k=1}^m(q-2m-1)^{m-k}(q-2m)^{k-1}\\
&=(q-2m-1)^m+(q-2m)^{m}-(q-2m-1)^{m}=(q-2m)^{m}.
\end{align*}

\section{Characteristic quasi-polynomial of restriction on $\{x_i=0\}$}\label{sec-quasi-poly-restxi=0}

Let $\mathcal{A}^{(1)}=\mathcal{B}_m^{\{x_i=0\}}$ for $i\in [m]$, and we prove the first equality of Theorem \ref{thm:main} as follows.
\begin{Them}\label{chara-quasi-del-by-xi=0}
We have
\begin{align*}
\left|M(\mathcal{A}^{(1)}_q)\right|=(q-2m+1)^{m-i}(q-2m+2)^{i-1}
\end{align*}
for any $q\in\mathbb{Z}$ with $q\gg 0$.
\end{Them}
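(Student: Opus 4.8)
The plan is to adapt the box-and-circle counting method recalled in Section~\ref{sec-count-method}. First I would substitute $x_i=0$ into the defining inequalities of $M((\mathcal{B}_m)_q)$ and determine which of them survive as constraints on the remaining $m-1$ coordinates $(x_s)_{s\neq i}$. A direct inspection shows that every inequality coming from a pair involving the index $i$ either becomes vacuous or reduces to one of the already-present conditions $x_s\neq 0$, $x_s\neq 1$, with a single exception: the inequality $x_i\neq x_t+1$ for $t>i$ becomes $x_t\neq -1$ (that is, $x_t\neq q-1$), whereas the pairs $(s,i)$ with $s<i$ merely reproduce $x_s\neq 1$. In particular, $M(\mathcal{A}^{(1)}_q)$ is described by the type $B_{m-1}$ Shi conditions on the variables indexed by $[m]\setminus\{i\}$, keeping their original order, together with the $m-i$ extra conditions $x_t\neq q-1$ for $t>i$.

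For odd $q$ I would re-run the encoding of Section~\ref{sec-count-method-odd}, now regarding the label $i$ as pinned to the circle of value $0$ (the upper-left circle, which is unlabeled in the unrestricted case) and distributing the remaining $m-1$ labels among the admissible boxes as before. The new constraint $x_t\neq q-1$ for $t>i$ forbids any such label from occupying the circle of value $q-1$, namely the lower-left box. Consequently each of the $i-1$ labels $s<i$ retains all $q-2m+2$ admissible boxes, while each of the $m-i$ labels $t>i$ loses exactly one, leaving $q-2m+1$. Since the placements are independent, this should give $\left|M(\mathcal{A}^{(1)}_q)\right|=(q-2m+1)^{m-i}(q-2m+2)^{i-1}$ when $q$ is odd.

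Because the lcm period of $\mathcal{B}_m$ and its deletions is at most $2$, it remains to treat even $q$. Here I would follow Section~\ref{sec-count-method-even}, splitting the count according to whether some coordinate equals $q/2$, again pinning the label $i$ to value $0$ and imposing $x_t\neq q-1$ for $t>i$, and then collapsing the resulting sum with Lemma~\ref{hodai}. The expected outcome is the same polynomial $(q-2m+1)^{m-i}(q-2m+2)^{i-1}$, which both proves the formula and exhibits period collapse for $H=\{x_i=0\}$. I expect the main obstacle to be precisely the combinatorial verification that $x_t\neq q-1$ removes exactly one admissible box for each $t>i$ and no more, while leaving the ascending/descending placement logic and the factorization of the count intact; and, in the even case, checking that the $q/2$ case split reassembles via Lemma~\ref{hodai} into the very same polynomial obtained for odd $q$.
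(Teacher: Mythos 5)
Your proposal is correct and is essentially the paper's own argument: the paper likewise pins the circle of value $0$ to the label $i$, observes that the upper-left box must remain empty and that the labels $t>i$ are excluded from the lower-left box (exactly your condition $x_t\neq q-1$, made precise by noting that the leftmost, hence largest, label in the lower-left box is the one that receives the value $q-1$), and so obtains $(q-2m+1)^{m-i}(q-2m+2)^{i-1}$ for odd $q$. For even $q$ the paper carries out precisely the $x_k=q/2$ case split you outline (with the additional standard observations that the lower-right box stays empty and the upper-right box admits no label larger than $k$) and collapses the resulting sum with Lemma~\ref{hodai} to the same polynomial.
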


The complement $M(\mathcal{A}^{(1)}_q)$ is the set of $(x_1,\dots,x_m)\in\mathbb{Z}_q^m$ satisfying the following conditions:
\begin{align*}
&x_i=0,\\
&x_s\neq 0,\ x_s\neq 1\ (s\in [m],\ s\neq i),\\
&x_s\neq x_t\ (s,t\in [m],\ s\neq t),\\
&x_s\neq x_t+1\ (s,t\in [m],\ s<t),\\
&x_s\neq -x_t,\ x_s\neq -x_t+1\ (s,t\in [m],\ s\neq t).
\end{align*}
We fix the condition $x_i=0$ and count the number of elements in $(x_1,\dots,x_m)\in M(\mathcal{A}^{(1)}_q)$ using a modified version of the counting method described in Section \ref{sec-count-method}.
The condition $x_i=0$ means that the circle at the left edge of the upper left box is the circle labeled with $i$.
The remaining parts of this section are devoted to giving the proof of Theorem~\ref{chara-quasi-del-by-xi=0}. 

\subsection{The proof of Theorem \ref{chara-quasi-del-by-xi=0} when $q$ is odd}\label{sec-quasi-poly-rest-xi=0-odd}
We assume that $q$ is odd.
Making a modification to the procedure in Section \ref{sec-count-method-odd}, we create boxes and circles and take the corresponding element $(x_1,\dots,x_m)\in M(\mathcal{A}^{(1)}_q)$.
\begin{itemize}
\item Prepare $q-2m+3$ boxes side by side, $\displaystyle \frac{q+1}{2}-(m-1)$ on the upper side and $\displaystyle \frac{q+1}{2}-(m-1)$ on the lower side since $m-1$ numbers make $2(m-1)$ circles in the boxes.
Place each of the numbers $1,\dots,i-1,i+1,\dots,m$ into one of $q-2m+3$ boxes.
Create circles by the same procedure described in Section \ref{sec-count-method-odd}, where the circle at the left edge of the upper left box is labeled with $i$.
For example, if  $m=5$, $q=13$, and $i=2$, then $(x_1,x_2,x_3,x_4,x_5)=(12,0,9,6,3)$ corresponds to the following boxes and circles:
\begin{center}
\scalebox{0.8}{
\begin{tikzpicture}[main/.style = {draw, circle, very thick}] 
\draw [very thick] (0,0) rectangle (4,1); \draw [very thick] (5,0) rectangle (9,1); \draw [very thick] (10,0) rectangle (14,1);
\draw [very thick] (0,1.5) rectangle (4,2.5); \draw [very thick] (5,1.5) rectangle (9,2.5); \draw [very thick] (10,1.5) rectangle (14,2.5);
\node[main](0) at (0.5,2) {{\large $2$}}; \node[main](1) at (1.5,2) {{\color{white}\large $0$}};
\node[main](2) at (5.5,2) {{\color{white}\large $0$}}; \node[main](3) at (6.5,2) {{\large $5$}};  \node[main](4) at (7.5,2) {{\color{white}\large $0$}};
\node[main](5) at (10.5,2) {{\color{white}\large $0$}}; \node[main](6) at (11.5,2) {{\large $4$}};
\node[main](12) at (1.5,0.5) {{\large $1$}};
\node[main](11) at (5.5,0.5) {{\color{white}\large $0$}}; \node[main](10) at (6.5,0.5) {{\color{white}\large $0$}}; \node[main](9) at (7.5,0.5) {{\large $3$}};
\node[main](8) at (10.5,0.5) {{\color{white}\large $0$}}; \node[main](7) at (11.5,0.5) {{\color{white}\large $0$}};
\end{tikzpicture}
}
\end{center}
\item The upper left box does not contain any number since $x_s\neq 1$ for any $s\in[m]$.
Each of the numbers $i+1,\dots,m$ cannot be placed into the lower left box.
Indeed, if labeled circles are placed in the lower left box and if the label on the leftmost circle is $s>i$, then we have $x_i-x_s=0-(-1)=1$, which is a contradiction.
\end{itemize}

There are $q-2m+2$ boxes that can contain the numbers $1,\dots,i-1$ except the upper left box, and there are $q-2m+1$ boxes that can contain the numbers $i+1,\dots,m$ except the upper left and lower left boxes.
Therefore, we have that
\begin{align*}
\left|M(\mathcal{A}^{(1)}_q)\right|=(q-2m+1)^{m-i}(q-2m+2)^{i-1}.
\end{align*}

\subsection{The proof of Theorem \ref{chara-quasi-del-by-xi=0} when $q$ is even}\label{sec-quasi-poly-rest-xi=0-even}
In this subsection, we assume that $q$ is even.
We count the number of elements $(x_1,\dots,x_m)\in M(\mathcal{A}^{(1)}_q)$ by dividing the cases by whether there exists an index $k$ such that $\displaystyle x_k=\frac{q}{2}$ or not.

(i)\ Suppose that there is no index $k\in[m]$ such that $\displaystyle x_k=\frac{q}{2}$.
Prepare $q-2m+2$ boxes, $\displaystyle \frac{q}{2}-(m-1)$ on the upper side and $\displaystyle \frac{q}{2}-(m-1)$ on the lower side since $m-1$ numbers make $2(m-1)$ circles in the boxes, and one unlabeled circle corresponding to the element $\displaystyle \frac{q}{2}\in\mathbb{Z}_q$ on the right side of the boxes.
Place each of the numbers $1,\dots,i-1,i+1,\dots,m$ into one of the $q-2m+2$ boxes, and create circles.
When creating the circles, the circle at the left edge of the upper left box is labeled with $i$.
Similarly to the case when $q$ is odd, the upper left box does not contain any number and each of the numbers $i+1,\dots,m$ cannot be placed into the lower left box.

There are $q-2m+1$ boxes that can contain the numbers $1,\dots,i-1$ except the upper left box, and there are $q-2m$ boxes that can contain the numbers $i+1,\dots,m$ except the upper left and lower left boxes. 
Therefore, in this case, the desired number is
\begin{align*}
(q-2m)^{m-i}(q-2m+1)^{i-1}.
\end{align*}

(i\hspace{-0.5mm}i)\ Suppose that there exists an index $k\in[m]$ such that $\displaystyle x_k=\frac{q}{2}$.
Create boxes and circles according to the following rules and take the corresponding element $(x_1,\dots,x_m)\in M(\mathcal{A}^{(1)}_q)$.
\begin{itemize}
\item Prepare $q-2m+4$ boxes, $\displaystyle \frac{q}{2}-(m-2)$ on the upper side and $\displaystyle \frac{q}{2}-(m-2)$ on the lower side since $m-2$ numbers make $2(m-2)$ circles in the boxes, and one circle labeled with $k$ on the right side of the boxes.
Place each of the numbers that are neither $i$ nor $k$ into one of $q-2m+4$ boxes.
The circle at the left edge of the upper left box is labeled with $i$.
\end{itemize}
For example, if $m=5$, $q=12$, $i=4$, and $k=2$ then $(x_1,x_2,x_3,x_4,x_5)=(5,6,11,0,3)$ corresponds to the following boxes and circles:
\begin{center}
\scalebox{0.8}{
\begin{tikzpicture}[main/.style = {draw, circle, very thick}] 
\draw [very thick] (0,0) rectangle (4,1); \draw [very thick] (5,0) rectangle (9,1); \draw [very thick] (10,0) rectangle (14,1);
\draw [very thick] (0,1.5) rectangle (4,2.5); \draw [very thick] (5,1.5) rectangle (9,2.5); \draw [very thick] (10,1.5) rectangle (14,2.5);
\node[main](6) at (15,1.25) {{\large $2$}};
\node[main](0) at (0.5,2) {{\large $4$}}; \node[main](1) at (1.5,2) {{\color{white}\large $0$}};
\node[main](2) at (5.5,2) {{\color{white}\large $0$}}; \node[main](3) at (6.5,2) {{\large $5$}};
\node[main](4) at (10.5,2) {{\color{white}\large $0$}}; \node[main](5) at (11.5,2) {{\large $1$}};
\node[main](11) at (1.5,0.5) {\large $3$}; \node[main](10) at (5.5,0.5) {{\color{white}\large $0$}}; \node[main](9) at (6.5,0.5) {{\color{white}\large $0$}};
\node[main](8) at (10.5,0.5) {{\color{white}\large $0$}}; \node[main](7) at (11.5,0.5) {{\color{white}\large $0$}};
\end{tikzpicture}
}
\end{center}
\begin{itemize}
\item The upper left box does not contain any number and each of the numbers $i+1,\dots,m$ cannot be placed into the lower left box.
\item For the same reasons discussed in Section \ref{sec-count-method-even}, the lower right box does not contain any numbers, since $x_k\neq -x_s+1$ for any $s\in [m]$ with $k\neq s$.
The upper right box does not contain any numbers larger than $k$, since $x_k\neq x_t+1$ for any $t\in [m]$ with $k<t$.
\end{itemize}

(i\hspace{-0.5mm}i-1)\ Suppose that $1\leq k\leq i-1$.
There are $q-2m+2$ boxes that can contain the numbers $1,\dots,k-1$ except the upper left and the lower right boxes, and $q-2m+1$ boxes that can contain the numbers $k+1,\dots,i-1$ except the upper left, the upper right, and the lower right boxes.
For the remaining numbers, there are $q-2m$ boxes that can contain the numbers $i+1,\dots,m$, except the upper left, the upper right, lower left, and the lower right boxes.
Therefore, in this case, the desired number is
\begin{align*}
(q-2m)^{m-i}(q-2m+1)^{i-k-1}(q-2m+2)^{k-1}.
\end{align*}

(i\hspace{-0.5mm}i-2)\ Suppose that $i+1\leq k\leq m$.
There are $q-2m+2$ boxes that can contain the numbers $1,\dots,i-1$, except the upper left and the lower right boxes.
For the remaining numbers, there are $q-2m+1$ boxes that can contain the numbers $i+1,\dots,k-1$ except the upper left, the lower left, and the lower right boxes, and $q-2m$ boxes that can contain the numbers $k+1,\dots,m$ except the upper left, the upper right, lower left, and the lower right boxes.
Therefore, in this case, the desired number is
\begin{align*}
(q-2m)^{m-k}(q-2m+1)^{k-i-1}(q-2m+2)^{i-1}.
\end{align*}

From the discussion above, by applying Lemma~\ref{hodai}, we have that
\begin{align*}
\left|M(\mathcal{A}^{(1)}_q)\right|
=&\,T^{m-i}(T+1)^{i-1}+\sum_{k=1}^{i-1}T^{m-i}(T+1)^{i-k-1}(T+2)^{k-1}+\sum_{k=i+1}^{m}T^{m-k}(T+1)^{k-i-1}(T+2)^{i-1}\\
=&\,T^{m-i}(T+1)^{i-1}+T^{m-i}\left((T+2)^{i-1}-(T+1)^{i-1}\right)+(T+2)^{i-1}\left((T+1)^{m-i}-T^{m-i}\right)\\
=&\,(T+1)^{m-i}(T+2)^{i-1}, 
\end{align*}
where $T:=q-2m$.

\section{Characteristic quasi-polynomial of restriction on $\{x_i=1\}$}\label{sec-quasi-poly-restxi=1}

Let $\mathcal{A}^{(2)}=\mathcal{B}_m^{\{x_i=1\}}$ for $i\in [m]$, and we prove the second equality of Theorem \ref{thm:main} as follows.
\begin{Them}\label{chara-quasi-del-by-xi=1}
We have
\begin{align*}
\left|M(\mathcal{A}^{(2)}_q)\right|=(q-2m)^{i-1}(q-2m+1)^{m-i}
\end{align*}
for any $q\in\mathbb{Z}$ with $q\gg 0$.
\end{Them}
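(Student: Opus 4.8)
The plan is to adapt the box-and-circle counting method of Section~\ref{sec-count-method}, exactly as was done for the restriction on $\{x_i=0\}$ in Section~\ref{sec-quasi-poly-restxi=0}. First I would record that $M(\mathcal{A}^{(2)}_q)$ consists of the tuples $(x_1,\dots,x_m)\in\mathbb{Z}_q^m$ satisfying $x_i=1$ together with the usual defining conditions of the Shi arrangement of type B. In the circle model the equation $x_i=1$ forces the circle labeled $i$ to sit at the position of $1\in\mathbb{Z}_q$, i.e.\ the circle immediately clockwise after the leftmost circle of the upper-left box. I would then fix this circle and count the admissible placements of the remaining $m-1$ numbers $\{1,\dots,m\}\setminus\{i\}$, splitting the argument into the cases $q$ odd and $q$ even as before.

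The key point is to read off from $x_i=1$ which positions, hence which boxes, become unavailable to each remaining number. Two genuinely new constraints appear. First, $x_s\neq -x_i=q-1$ forbids the position $q-1$ (the opposite of $i$) to every other number. Second, and crucially, $x_s\neq x_i+1=2$ for $s<i$ --- a special case of $x_s\neq x_t+1$ with $t=i$ --- forbids the position $2$ to the numbers of index smaller than $i$, while a number of index larger than $i$ may still occupy it (the circle preceding position $2$ carries the label $i$, which must not exceed the label placed at position $2$). This asymmetry is exactly what produces the two different exponents: each index $s<i$ loses one additional admissible box compared with each index $s>i$. Tracking the forbidden boxes should give $q-2m$ admissible boxes per index $s<i$ and $q-2m+1$ per index $s>i$, and since the placements are independent the odd-$q$ count is the product $(q-2m)^{i-1}(q-2m+1)^{m-i}$.

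For $q$ even I would again split according to whether some coordinate equals $q/2$. When no $x_k=q/2$ occurs, the position $q/2$ contributes one standalone unlabeled circle and the count is a single monomial; when $x_k=q/2$ for some (necessarily unique) index $k$, I would sum over $k$, separating the subcases $k<i$ and $k>i$ exactly as in Section~\ref{sec-quasi-poly-rest-xi=0-even}, and collapse the resulting geometric sums using Lemma~\ref{hodai}. Writing $T=q-2m$, the pieces should telescope to $T^{i-1}(T+1)^{m-i}=(q-2m)^{i-1}(q-2m+1)^{m-i}$, matching the odd case and completing the proof.

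The main obstacle I anticipate is not the algebra but the bookkeeping of forbidden boxes: one must verify, for each class of indices and in each parity, that the exclusions listed above are exactly the obstructions, and that the resulting correspondence between box configurations and elements of $M(\mathcal{A}^{(2)}_q)$ is a genuine bijection. In particular, the interaction of the new position-$2$ constraint with the standalone $q/2$-circle in the even case must be checked carefully, so that no placement is double-counted or wrongly excluded when the two telescoping sums are combined.
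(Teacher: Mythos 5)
Your proposal is correct and takes essentially the same route as the paper: the identical box-and-circle model with the circle labeled $i$ pinned at the value $1\in\mathbb{Z}_q$ and an unlabeled circle forced at $q-1$, the identical observation that the constraint $x_s\neq x_i+1=2$ for $s<i$ excludes exactly the labels smaller than $i$ from the upper-left box (yielding $q-2m$ admissible boxes for $s<i$ versus $q-2m+1$ for $s>i$), and the identical even-$q$ split over whether some $x_k=q/2$, with subcases $k<i$ and $k>i$ collapsed by Lemma~\ref{hodai}. The only difference is that the paper carries out the even-case bookkeeping you defer, obtaining $(q-2m-1)^{i-1}(q-2m)^{m-i}$ for the case with no coordinate equal to $q/2$, and $(q-2m-1)^{i-k-1}(q-2m)^{m-i+k-1}$, $(q-2m)^{m-k+i-1}(q-2m+1)^{k-i-1}$ for the subcases $k<i$ and $k>i$, which indeed telescope to $T^{i-1}(T+1)^{m-i}$ with $T=q-2m$ exactly as you predict.
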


The complement $M(\mathcal{A}^{(2)}_q)$ is the set of $(x_1,\dots,x_m)\in\mathbb{Z}_q^m$ satisfying the following conditions:
\begin{align*}
&x_i=1,\\
&x_s\neq 0,\ x_s\neq 1\ (s\in [m],\ s\neq i),\\
&x_s\neq x_t\ (s,t\in [m],\ s\neq t),\\
&x_s\neq x_t+1\ (s,t\in [m],\ s<t),\\
&x_s\neq -x_t,\ x_s\neq -x_t+1\ (s,t\in [m],\ s\neq t).
\end{align*}
We fix the condition $x_i=1$ and count the number of elements in $(x_1,\dots,x_m)\in M(\mathcal{A}^{(2)}_q)$ using a modified version of the counting method described in Section \ref{sec-count-method}.
The condition $x_i=1$ means that the second circle from the left in the upper left box is the circle labeled with $i$.
\begin{center}
\scalebox{0.8}{
\begin{tikzpicture}[main/.style = {draw, circle, very thick}] 
\draw [very thick] (0,0) rectangle (4,1); \draw [very thick] (0,1.5) rectangle (4,2.5);
\node[main](0) at (0.5,2) {{\color{white}\large $0$}}; \node[main](1) at (1.5,2) {{\large $i$}}; \node[main](2) at (1.5,0.5) {{\color{white}\large $0$}};
\end{tikzpicture}
}
\end{center}

The remaining parts of this section are devoted to giving the proof of Theorem~\ref{chara-quasi-del-by-xi=1}. 

\subsection{The proof of Theorem \ref{chara-quasi-del-by-xi=1} when $q$ is odd}\label{sec-quasi-poly-rest-xi=1-odd}
We assume that $q$ is odd.
Making a modification to the procedure in Section \ref{sec-count-method-odd}, we create boxes and circles and take the corresponding element $(x_1,\dots,x_m)\in M(\mathcal{A}^{(2)}_q)$.
\begin{itemize}
\item Prepare $q-2m+1$ boxes side by side, $\displaystyle \frac{q+1}{2}-m$ on the upper side and $\displaystyle \frac{q+1}{2}-m$ on the lower side.
Place each of the numbers $1,\dots,i-1,i+1,\dots,m$ into one of $q-2m+1$ boxes.
Create circles by the same procedure described in Section \ref{sec-count-method-odd} and place the circle labeled with $i$ next to the leftmost circle in the upper left box.
In addition, place the unlabeled circle on the opposite side of the circle labeled with $i$.
For example, if  $m=5$, $q=15$, and $i=3$, then $(x_1,x_2,x_3,x_4,x_5)=(12,10,1,7,2)$ corresponds to the following boxes and circles:
\begin{center}
\scalebox{0.8}{
\begin{tikzpicture}[main/.style = {draw, circle, very thick}] 
\draw [very thick] (0,0) rectangle (4,1); \draw [very thick] (5,0) rectangle (9,1); \draw [very thick] (10,0) rectangle (14,1);
\draw [very thick] (0,1.5) rectangle (4,2.5); \draw [very thick] (5,1.5) rectangle (9,2.5); \draw [very thick] (10,1.5) rectangle (14,2.5);
\node[main](0) at (0.5,2) {{\color{white}\large $0$}}; \node[main](1) at (1.5,2) {{\large $3$}}; \node[main](2) at (2.5,2) {{\large $5$}}; \node[main](3) at (3.5,2) {{\color{white}\large $0$}};
\node[main](4) at (5.5,2) {{\color{white}\large $0$}}; \node[main](5) at (6.5,2) {{\color{white}\large $0$}};
\node[main](6) at (10.5,2) {{\color{white}\large $0$}}; \node[main](7) at (11.5,2) {{\large $4$}};
\node[main](14) at (1.5,0.5) {{\color{white}\large $0$}}; \node[main](13) at (2.5,0.5) {{\color{white}\large $0$}}; \node[main](12) at (3.5,0.5) {{\large $1$}};
\node[main](11) at (5.5,0.5) {{\color{white}\large $0$}}; \node[main](10) at (6.5,0.5) {{\large $2$}};
\node[main](9) at (10.5,0.5) {{\color{white}\large $0$}}; \node[main](8) at (11.5,0.5) {{\color{white}\large $0$}};
\end{tikzpicture}
}
\end{center}
\item Each of the numbers $1,\dots,i-1$ cannot be placed into the upper left box and each of the numbers $i+1,\dots,m$ can be placed anywhere.
Indeed, a circle with a label smaller than $i$ does not precede the circle labeled with $i$ in a clockwise direction since $i$ is the second.
When the number $s$ $(s>i)$ is in the upper left box, the circle labeled with $s$ is placed in after the circle labeled with $i$, so no contradiction occurs.
\end{itemize}

There are $q-2m$ boxes that can contain the numbers $1,\dots,i-1$ except the upper left box, and there are $q-2m+1$ boxes that can contain the numbers $i+1,\dots,m$.
Therefore, we have that
\begin{align*}
\left|M(\mathcal{A}^{(2)}_q)\right|=(q-2m)^{i-1}(q-2m+1)^{m-i}.
\end{align*}

\subsection{The proof of Theorem \ref{chara-quasi-del-by-xi=1} when $q$ is even}\label{sec-quasi-poly-rest-xi=1-even}
In this subsection, we assume that $q$ is even.
We count the number of elements $(x_1,\dots,x_m)\in M(\mathcal{A}^{(2)}_q)$ by dividing the cases by whether there exists an index $k$ such that $\displaystyle x_k=\frac{q}{2}$ or not.

(i)\ Suppose that there is no index $k\in[m]$ such that $\displaystyle x_k=\frac{q}{2}$.
Prepare $q-2m$ boxes, $\displaystyle \frac{q}{2}-m$ on the upper side and $\displaystyle \frac{q}{2}-m$ on the lower side, and one unlabeled circle corresponding to the element $\displaystyle \frac{q}{2}\in\mathbb{Z}_q$ on the right side of the boxes.
Place each of the numbers $1,\dots,i-1,i+1,\dots,m$ into one of the $q-2m$ boxes, and create circles.
When creating the circles, place the circle labeled with $i$ next to the leftmost circle in the upper left box and the unlabeled circle on the opposite side of the circle labeled with $i$.
Similarly to the case when $q$ is odd, each of the numbers $1,\dots,i-1$ cannot be placed into the upper left box and each of the numbers $i+1,\dots,m$ can be placed anywhere.

There are $q-2m-1$ boxes that can contain the numbers $1,\dots,i-1$ except the upper left box, and each of the numbers $i+1,\dots,m$ can be contained anywhere $q-2m$ boxes. 
Therefore, in this case, the desired number is
\begin{align*}
(q-2m-1)^{i-1}(q-2m)^{m-i}.
\end{align*}

(i\hspace{-0.5mm}i)\ Suppose that there exists an index $k\in[m]$ such that $\displaystyle x_k=\frac{q}{2}$.
Create boxes and circles according to the following rules and take the corresponding element $(x_1,\dots,x_m)\in M(\mathcal{A}^{(2)}_q)$.
\begin{itemize}
\item Prepare $q-2m+2$ boxes, $\displaystyle \frac{q}{2}-(m-1)$ on the upper side and $\displaystyle \frac{q}{2}-(m-1)$ on the lower side, and one circle labeled with $k$ on the right side of the boxes.
Place each of the numbers that are neither $i$ nor $k$ into one of $q-2m+2$ boxes.
\item Place the circle labeled with $i$ next to the leftmost circle in the upper left box and the unlabeled circle on the opposite side of the circle labeled with $i$.
\end{itemize}
For example, if $m=5$, $q=16$, $i=3$, and $k=1$ then $(x_1,x_2,x_3,x_4,x_5)=(8,10,1,3,11)$ corresponds to the following boxes and circles:
\begin{center}
\scalebox{0.8}{
\begin{tikzpicture}[main/.style = {draw, circle, very thick}] 
\draw [very thick] (0,0) rectangle (4,1); \draw [very thick] (5,0) rectangle (9,1); \draw [very thick] (10,0) rectangle (14,1); \draw [very thick] (15,0) rectangle (19,1);
\draw [very thick] (0,1.5) rectangle (4,2.5); \draw [very thick] (5,1.5) rectangle (9,2.5); \draw [very thick] (10,1.5) rectangle (14,2.5); \draw [very thick] (15,1.5) rectangle (19,2.5);
\node[main](8) at (20,1.25) {{\large $1$}};
\node[main](0) at (0.5,2) {{\color{white}\large $0$}}; \node[main](1) at (1.5,2) {{\large $3$}};
\node[main](2) at (5.5,2) {{\color{white}\large $0$}}; \node[main](3) at (6.5,2) {{\large $4$}};
\node[main](4) at (10.5,2) {{\color{white}\large $0$}}; \node[main](5) at (11.5,2) {{\color{white}\large $0$}}; \node[main](6) at (12.5,2) {{\color{white}\large $0$}};
\node[main](7) at (15.5,2) {{\color{white}\large $0$}};
\node[main](15) at (1.5,0.5) {{\color{white}\large $0$}};
\node[main](14) at (5.5,0.5) {{\color{white}\large $0$}}; \node[main](13) at (6.5,0.5) {{\color{white}\large $0$}};
\node[main](12) at (10.5,0.5) {{\color{white}\large $0$}}; \node[main](11) at (11.5,0.5) {{\large $5$}}; \node[main](10) at (12.5,0.5) {{\large $2$}};
\node[main](9) at (15.5,0.5) {{\color{white}\large $0$}};
\end{tikzpicture}
}
\end{center}
\begin{itemize}
\item Each of the numbers $1,\dots,i-1$ cannot be placed into the upper left box.
On the other hand, each of the numbers $i+1,\dots,m$ can be placed into the upper left box.
\item For the same reasons discussed in Section \ref{sec-count-method-even}, the lower right box does not contain any numbers, since $x_k\neq -x_s+1$ for any $s\in [m]$ with $k\neq s$.
The upper right box does not contain any numbers larger than $k$, since $x_k\neq x_t+1$ for any $t\in [m]$ with $k<t$.
\end{itemize}

(i\hspace{-0.5mm}i-1)\ Suppose that $1\leq k\leq i-1$.
There are $q-2m$ boxes that can contain the numbers $1,\dots,k-1$ except the upper left and the lower right boxes, and $q-2m-1$ boxes that can contain the numbers $k+1,\dots,i-1$ except the upper left, the upper right, and the lower right boxes.
For the remaining numbers, there are $q-2m$ boxes that can contain the numbers $i+1,\dots,m$, except the upper right and the lower right boxes.
Therefore, in this case, the desired number is
\begin{align*}
(q-2m-1)^{i-k-1}(q-2m)^{m-i+k-1}.
\end{align*}

(i\hspace{-0.5mm}i-2)\ Suppose that $i+1\leq k\leq m$.
There are $q-2m$ boxes that can contain the numbers $1,\dots,i-1$, except the upper left and the lower right boxes.
For the remaining numbers, there are $q-2m+1$ boxes that can contain the numbers $i+1,\dots,k-1$ except the lower right box, and $q-2m$ boxes that can contain the numbers $k+1,\dots,m$ except the upper right and the lower right boxes.
Therefore, in this case, the desired number is
\begin{align*}
(q-2m)^{m-k+i-1}(q-2m+1)^{k-i-1}.
\end{align*}

From the discussion above, by applying Lemma~\ref{hodai}, we have that
\begin{align*}
\left|M(\mathcal{A}^{(2)}_q)\right|
=&\,(T-1)^{i-1}T^{m-i}+\sum_{k=1}^{i-1}(T-1)^{i-k-1}T^{m-i+k-1}+\sum_{k=i+1}^{m}T^{m-k+i-1}(T+1)^{k-i-1}\\
=&\,(T-1)^{i-1}T^{m-i}+T^{m-i}\left(T^{i-1}-(T-1)^{i-1}\right)+T^{i-1}\left((T+1)^{m-i}-T^{m-i}\right)\\
=&\,T^{i-1}(T+1)^{m-i}, 
\end{align*}
where $T:=q-2m$.

\section{Characteristic quasi-polynomial of restriction on $\{x_i-x_j=0\}$}\label{sec-quasi-poly-restxi=xj}

Let $\mathcal{A}^{(3)}=\mathcal{B}_m^{\{x_i-x_j=0\}}$ for $1\leq i<j \leq m$, and we prove the following theorem.
\begin{Them}\label{chara-quasi-del-by-xi=xj}
We have
\begin{align*}
\left|M(\mathcal{A}^{(3)}_q)\right|=
\begin{cases}
(q-2m+1)^{j-i-1}(q-2m+2)^{m-j}((q-2m+2)^i-(q-2m+1)^{i-1}) \;\;\;&(q\ \text{is odd}), \\
(q-2m+1)^{j-i-1}(q-2m+2)^{i-1}((q-2m+2)^{m-j+1}-(q-2m+1)^{m-j}) &(q\ \text{is even}).
\end{cases}
\end{align*}
\end{Them}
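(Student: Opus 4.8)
The plan is to adapt the box-and-circle bijection of Section~\ref{sec-count-method} to the restriction, where the defining condition is now $x_i = x_j$ together with the surviving Shi conditions. First I would record exactly which inequalities persist on the hyperplane $\{x_i - x_j = 0\}$: writing $v := x_i = x_j$, the conditions $x_s \neq 0,1$, the mutual distinctness of the remaining coordinates, and the $\pm$-conditions all survive, but two inequalities degenerate into conditions on $v$ alone, namely $x_i \neq -x_j$ and $x_i \neq -x_j + 1$ (with $i \neq j$), i.e. $2v \neq 0$ and $2v \neq 1$. For $q$ odd the first is automatic (since $v \neq 0$) while the second forbids exactly $v = (q+1)/2$; for $q$ even the second is vacuous while the first forbids $v = q/2$. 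This parity dichotomy is precisely the source of the two-line formula, so I would isolate it at the outset.

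Next I would encode $x_i = x_j$ by a single circle carrying both labels $i$ and $j$. Its behaviour under the reconstruction rules is governed by the smaller label on one side and the larger on the other: the circle clockwise-preceding it may carry no label exceeding $i$ (from $x_i \neq x_t + 1$ with $t > i$), while the circle clockwise-following it may carry no label below $j$ (from $x_s \neq x_j + 1$ with $s < j$); its opposite must be unlabeled; and the extra conditions $2v \neq 1$ (resp. $2v \neq 0$) say the combined circle cannot sit at the position with $2v=1$ (resp. $2v=0$). I would then run the modified Steps~1--5, placing the $m-2$ remaining numbers into boxes and the combined circle into its admissible boxes, reading off, as in the previous sections, how many boxes are available for each of the three index ranges $1,\dots,i-1$, $\ i+1,\dots,j-1$, $\ j+1,\dots,m$.

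For $q$ odd I would sum over the box (equivalently, the position) of the combined circle, with the forbidden position $v=(q+1)/2$ responsible for the subtracted term; collecting the ranges $i+1,\dots,j-1$ and $j+1,\dots,m$ as free factors $(q-2m+1)^{j-i-1}$ and $(q-2m+2)^{m-j}$, the summation over the combined circle together with the range $1,\dots,i-1$ telescopes via Lemma~\ref{hodai} to $(q-2m+2)^i - (q-2m+1)^{i-1}$. For $q$ even I would instead split, as in Section~\ref{sec-count-method-even}, according to whether some $x_k = q/2$ for $k \notin \{i,j\}$ (the combined circle itself can never equal $q/2$, since $2v \neq 0$); in the presence of a $q/2$-circle the usual emptiness of the lower-right box and the label restriction on the upper-right box must be combined with the combined-circle constraints, and a second application of Lemma~\ref{hodai}, over $k$ and over the position of the combined circle, yields $(q-2m+2)^{m-j+1} - (q-2m+1)^{m-j}$.

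The main obstacle I anticipate is the bookkeeping around the combined circle: correctly determining, in each index range and in each parity, how many boxes each remaining label may occupy once both labels of the combined circle and (for even $q$) the $q/2$-circle have imposed their one-sided restrictions, and verifying that the special forbidden position contributes exactly the single subtracted power. Keeping the odd and even accountings separate is delicate, and in particular one must notice that the combined circle effectively merges with the range $1,\dots,i-1$ for $q$ odd but with the range $j+1,\dots,m$ for $q$ even; once the available-box counts are pinned down, the two closed forms follow mechanically from Lemma~\ref{hodai}.
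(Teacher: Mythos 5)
Your proposal is correct and follows essentially the same route as the paper: the same merged-circle encoding of $x_i=x_j$, the same parity dichotomy arising from the degenerated conditions $2v\neq 0$ and $2v\neq 1$, and the same even-$q$ split over an index $k$ with $x_k=q/2$ (with $k\in\{i,j\}$ excluded by contradiction), finished by Lemma~\ref{hodai}. The only cosmetic difference is in the odd case, where you subtract the forbidden configuration (the combined circle at $v=(q+1)/2$, i.e.\ at the right end of the lower-right box with no label from $1,\dots,i-1$ in that box) directly to get $(q-2m+2)^i-(q-2m+1)^{i-1}$, whereas the paper reaches the identical correction term $(q-2m+1)^{i-1}$ by splitting into two cases according to whether some label from $1,\dots,i-1$ lies in the lower-right box; this is a trivial rearrangement, not a different argument (and, strictly speaking, that subtraction is plain algebra rather than an instance of Lemma~\ref{hodai}).
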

The complement $M(\mathcal{A}^{(3)}_q)$ is the set of $(x_1,\dots,x_m)\in\mathbb{Z}_q^m$ satisfying the following conditions:
\begin{align*}
&x_i=x_j,\\ 
&x_s\neq 0,\ x_s\neq 1\ (s\in [m]),\\
&x_s\neq x_t\ (s,t\in [m],\ s\neq t,\ \{s,t\}\neq \{i,j\}),\\
&x_s\neq x_t+1\ (s,t\in [m],\ s<t,\ (s,t)\neq (i,j)),\\
&x_s\neq -x_t,\ x_s\neq -x_t+1\ (s,t\in [m],\ s\neq t).
\end{align*}
To prove Theorem \ref{chara-quasi-del-by-xi=xj}, we fix the condition $x_i=x_j$ and count the number of elements in $(x_1,\dots,x_m)\in M(\mathcal{A}^{(3)}_q)$ using a modified version of the counting method described in Section \ref{sec-count-method}.
The condition $x_i=x_j$ means that a circle has label $i$ and label $j$ at the same time.
Thus we consider the numbers $i$ and $j$ as the pair $(i,j)$.
In the examples below, the pair $(i,j)$ is denoted as the smaller number $i$.

\subsection{The proof of Theorem \ref{chara-quasi-del-by-xi=xj} when $q$ is odd}\label{sec-quasi-poly-restxi=xj-odd}
We assume that $q$ is odd.
Making a modification to the procedure in Section \ref{sec-count-method-odd}, we create boxes and circles, and take the corresponding element $(x_1,\dots,x_m)\in M(\mathcal{A}^{(3)}_q)$, as follows:
\begin{itemize}
\item Prepare $q-2m+3$ boxes side by side, $\displaystyle \frac{q+1}{2}-(m-1)$ on the upper side and $\displaystyle \frac{q+1}{2}-(m-1)$ on the lower side.
Place each of the numbers $1,\dots,j-1,j+1,\dots,m$ in one of $q-2m+3$ boxes.
The upper left box does not contain a number since $x_s\neq 1$ for any $s\in [m]$.
\end{itemize}
For example, if $m=5$, $q=15$, $i=2$, and $j=4$, then $(x_1,x_2,x_3,x_4,x_5)=(8,2,11,2,3)$ corresponds to the following boxes and circles:
\begin{center}
\scalebox{0.8}{
\begin{tikzpicture}[main/.style = {draw, circle, very thick}] 
\draw [very thick] (0,0) rectangle (4,1); \draw [very thick] (5,0) rectangle (9,1); \draw [very thick] (10,0) rectangle (14,1); \draw [very thick] (15,0) rectangle (19,1);
\draw [very thick] (0,1.5) rectangle (4,2.5); \draw [very thick] (5,1.5) rectangle (9,2.5); \draw [very thick] (10,1.5) rectangle (14,2.5); \draw [very thick] (15,1.5) rectangle (19,2.5);
\node[main](1) at (0.5,2) {{\color{white}\large $0$}};
\node[main](2) at (5.5,2) {{\color{white}\large $0$}}; \node[main](3) at (6.5,2) {{\large $2$}}; \node[main](4) at (7.5,2) {{\large $5$}}; \node[main](5) at (8.5,2) {{\color{white}\large $0$}};
\node[main](6) at (10.5,2) {{\color{white}\large $0$}};
\node[main](7) at (15.5,2) {{\color{white}\large $0$}}; \node[main](8) at (16.5,2) {{\color{white}\large $0$}};
\node[main](9) at (5.5,0.5) {{\color{white}\large $0$}}; \node[main](10) at (6.5,0.5) {{\color{white}\large $0$}}; \node[main](11) at (7.5,0.5) {{\color{white}\large $0$}}; \node[main](12) at (8.5,0.5) {{\large $3$}};
\node[main](13) at (10.5,0.5) {{\color{white}\large $0$}};
\node[main](14) at (15.5,0.5) {{\color{white}\large $0$}}; \node[main](15) at (16.5,0.5) {{\large $1$}};
\end{tikzpicture}
}
\end{center}
\begin{itemize}
\item The box containing the pair $(i,j)$ cannot contain the numbers $i+1,\dots,j-1$.
Indeed, if the clockwise next circle of the circle labeled with $(i,j)$ is the circle labeled with $s$ $(i+1\leq s\leq j-1)$, then $x_s=x_j+1$ and $s<j$, which is a contradiction.
We note that $x_s=x_i+1$ also holds, but it is not a contradiction.
\begin{center}
\scalebox{0.8}{
\begin{tikzpicture}[main/.style = {draw, circle, very thick}] 
\draw [very thick] (0,0) rectangle (5,1); \draw [very thick] (0,1.5) rectangle (5,2.5);
\node[main](1) at (2.5,0.5) {{\color{white}\large $0$}}; \node[main](2) at (3.5,0.5) {{\color{white}\large $0$}}; \node[main](3) at (2.5,2) {{\large $i$}}; \node[main](4) at (3.5,2) {{\large $s$}};
\end{tikzpicture}
}
\end{center}
\item The circle at the right end of the lower right box is not the circle labeled with $(i,j)$.
Indeed, if so, then the clockwise next circle on the opposite side of the circle labeled with $(i,j)$ is itself.
This implies $x_j=-x_i+1$, which is a contradiction.
\begin{center}
\scalebox{0.8}{
\begin{tikzpicture}[main/.style = {draw, circle, very thick}] 
\draw [very thick] (0,0) rectangle (4,1); \draw [very thick] (5,0) rectangle (9,1); \draw [very thick] (0,1.5) rectangle (4,2.5); \draw [very thick] (5,1.5) rectangle (9,2.5);
\node[main](1) at (8.5,0.5) {{\large $i$}}; \node[main](2) at (8.5,2) {{\color{white}\large $0$}};
\end{tikzpicture}
}
\end{center}
\end{itemize}

We count the number of elements $(x_1,\dots,x_m)\in M(\mathcal{A}^{(3)}_q)$ by dividing the cases according to whether the lower right box contains the numbers $1,\dots,i-1$.

(i) Suppose that there is no number $1,\dots,i-1$ in the lower right box.
There are $q-2m+1$ boxes that can contain the numbers $1,\dots,i-1$, except the upper left and the lower right boxes.
In this case, the pair $(i,j)$ cannot be contained in the lower right box.
Thus there are $q-2m+1$ boxes for the pair $(i,j)$, except the upper left and the lower right boxes.
For the remaining numbers, the numbers $i+1,\dots,j-1$ cannot be placed into the same box as $(i,j)$ and the upper left box, so $q-2m+1$ boxes can be chosen.
The numbers $j+1,\dots,m$ can be placed anywhere except the upper left box, so $q-2m+2$ boxes can be chosen.
Therefore the desired number in this case is 
\begin{align*}
(q-2m+1)^{j-1}(q-2m+2)^{m-j}.
\end{align*}

(i\hspace{-0.5mm}i)\ Suppose one of the numbers $1,\dots,i-1$ is in the lower right box.
The total number to place $1,\dots,i-1$ in this case is $(q-2m+2)^{i-1}-(q-2m+1)^{i-1}$.
Indeed, the number to place $1,\dots,i-1$ in $q-2m+2$ boxes except the upper left box is $(q-2m+2)^{i-1}$, and from (i), the number of cases where there is no numbers $1,\dots,i-1$ in the upper left and lower right boxes is $(q-2m+1)^{i-1}$.
Next, since the circle labeled with $(i,j)$ is never placed on the right end of the lower right box, the pair $(i,j)$ can be placed into the lower right box.
Thus there are $q-2m+2$ boxes to place the pair $(i,j)$, except the upper left box.
For the remaining numbers, as in (i), the numbers $i+1,\dots,j-1$ cannot be placed in the same box as $(i,j)$ and the upper left box, so $q-2m+1$ boxes can be chosen.
The numbers $j+1,\dots,m$ can be placed anywhere except the upper left box, so $q-2m+2$ boxes can be chosen.
Therefore the desired number in this case is
\begin{align*}
\left((q-2m+2)^{i-1}-(q-2m+1)^{i-1}\right)(q-2m+1)^{j-i-1}(q-2m+2)^{m-j+1}.
\end{align*}

From the discussion above, we have that
\begin{align*}
\left|M(\mathcal{A}^{(3)}_q)\right|&=U^{j-1}(U+1)^{m-j}+\left((U+1)^{i-1}-U^{i-1}\right)U^{j-i-1}(U+1)^{m-j+1}\\
&=U^{j-1}(U+1)^{m-j}+U^{j-i-1}(U+1)^{m+i-j}-U^{j-2}(U+1)^{m-j+1}\\
&=U^{j-i-1}(U+1)^{m+i-j}+U^{j-2}(U+1)^{m-j}\left(U-(U+1)\right)\\
&=U^{j-i-1}(U+1)^{m-j}\left((U+1)^i-U^{i-1}\right),
\end{align*}
where $U:=q-2m+1$.

\subsection{The proof of Theorem \ref{chara-quasi-del-by-xi=xj} when $q$ is even}\label{sec-quasi-poly-restxi=xj-even}
In this subsection, we assume that $q$ is even.
We count the number of elements $(x_1,\dots,x_m)\in M(\mathcal{A}^{(3)}_q)$ by dividing the cases by whether there exists an index $k$ such that $\displaystyle x_k=\frac{q}{2}$ or not.

(i)\ Suppose that there is no index $k\in[m]$ such that $\displaystyle x_k=\frac{q}{2}$.
Prepare $q-2m+2$ boxes, $\displaystyle \frac{q}{2}-(m-1)$ on the upper side and $\displaystyle \frac{q}{2}-(m-1)$ on the lower side, and one unlabeled circle corresponding to the element $\displaystyle \frac{q}{2}\in\mathbb{Z}_q$ on the right side of the boxes.
Put each of the numbers $1,\dots,i-1,i+1,\dots,j-1,j+1,\dots,m$ and the pair $(i,j)$ into one of the $q-2m+1$ boxes, except the upper left box.
For example, if $m=5$, $q=16$, $i=2$, and $j=4$, then $(x_1,x_2,x_3,x_4,x_5)=(3,9,6,9,4)$ corresponds to the following boxes and circles:
\begin{center}
\scalebox{0.8}{
\begin{tikzpicture}[main/.style = {draw, circle, very thick}] 
\draw [very thick] (0,0) rectangle (3,1); \draw [very thick] (4,0) rectangle (7,1); \draw [very thick] (8,0) rectangle (11,1); \draw [very thick] (12,0) rectangle (15,1);
\draw [very thick] (0,1.5) rectangle (3,2.5); \draw [very thick] (4,1.5) rectangle (7,2.5); \draw [very thick] (8,1.5) rectangle (11,2.5); \draw [very thick] (12,1.5) rectangle (15,2.5);
\node[main](8) at (16,1.25) {{\color{white}\large $0$}};
\node[main](0) at (0.5,2) {{\color{white}\large $0$}};
\node[main](1) at (4.5,2) {{\color{white}\large $0$}};
\node[main](2) at (8.5,2) {{\color{white}\large $0$}}; \node[main](3) at (9.5,2) {{\large $1$}}; \node[main](4) at (10.5,2) {{\large $5$}};
\node[main](5) at (12.5,2) {{\color{white}\large $0$}}; \node[main](6) at (13.5,2) {{\large $3$}}; \node[main](7) at (14.5,2) {{\color{white}\large $0$}};
\node[main](15) at (4.5,0.5) {{\color{white}\large $0$}};
\node[main](14) at (8.5,0.5) {{\color{white}\large $0$}}; \node[main](13) at (9.5,0.5) {{\color{white}\large $0$}}; \node[main](12) at (10.5,0.5) {{\color{white}\large $0$}};
\node[main](11) at (12.5,0.5) {{\color{white}\large $0$}}; \node[main](10) at (13.5,0.5) {{\color{white}\large $0$}}; \node[main](10) at (14.5,0.5) {{\large $2$}};
\end{tikzpicture}
}
\end{center}
As in the case when $q$ is odd, each of the numbers $i+1,\dots,j-1$ cannot be placed in the same box as the pair $(i,j)$.
On the other hand, unlike the case when $q$ is odd, the circle labeled with $(i,j)$ can be placed on the right end of the lower right box.
Indeed, if so, then the clockwise next circle on the opposite side of the circle labeled with $(i,j)$ is the unlabeled circle.
We create $(x_1,\dots,x_m)\in M(\mathcal{A}^{(3)}_q)$ by the same procedure described in Section \ref{sec-count-method}.

Each of $1,\dots,i-1$ and $(i,j)$ can be placed into one of $q-2m+1$ boxes, except the upper left box.
Then each of the numbers $i+1,\dots,j-1$ can be placed into one of $q-2m$ boxes except the upper left box and the box containing $(i,j)$, and each of the numbers $j+1,\dots,m$ can be placed into one of $q-2m+1$ boxes except only the upper left box.
Therefore, in this case, the desired number is
\begin{align*}
(q-2m)^{j-i-1}(q-2m+1)^{m+i-j}.
\end{align*}

(i\hspace{-0.5mm}i)\ Suppose that there exists an index $k\in[m]$ such that $\displaystyle x_k=\frac{q}{2}$.
Create boxes and circles according to the following rules, and take the corresponding element $(x_1,\dots,x_m)\in M(\mathcal{A}^{(3)}_q)$.
\begin{itemize}
\item Prepare $q-2m+4$ boxes, $\displaystyle \frac{q}{2}-(m-2)$ on the upper side and $\displaystyle \frac{q}{2}-(m-2)$ on the lower side, and one circle labeled with $k$ on the right side of the boxes.
Place each of $(i,j)$ and the numbers that is not $i$, $j$, or $k$ into one of $q-2m+3$ boxes, except the upper left box.
\item As in the case when $q$ is odd, each of the numbers $i+1,\dots,j-1$ cannot be placed in the same box as the pair $(i,j)$.
\item For the same reasons discussed in Section \ref{sec-count-method-even}, the lower right box does not contain any numbers, since $x_k\neq -x_s+1$ for any $s\in [m]$ with $k\neq s$.
The upper right box does not contain any numbers larger than $k$, since $x_k\neq x_t+1$ for any $t \in [m]$ with $k<t$.
\end{itemize}
For example, if $m=5$, $q=16$, $i=2$, $j=4$, and $k=5$, then $(x_1,x_2,x_3,x_4,x_5)=(7,15,12,15,8)$ corresponds to the following boxes and circles:
\begin{center}
\scalebox{0.8}{
\begin{tikzpicture}[main/.style = {draw, circle, very thick}] 
\draw [very thick] (0,0) rectangle (3,1); \draw [very thick] (4,0) rectangle (7,1); \draw [very thick] (8,0) rectangle (11,1); \draw [very thick] (12,0) rectangle (15,1); \draw [very thick] (16,0) rectangle (19,1); 
\draw [very thick] (0,1.5) rectangle (3,2.5); \draw [very thick] (4,1.5) rectangle (7,2.5); \draw [very thick] (8,1.5) rectangle (11,2.5); \draw [very thick] (12,1.5) rectangle (15,2.5); \draw [very thick] (16,1.5) rectangle (19,2.5);
\node[main](8) at (20,1.25) {{\large $5$}};
\node[main](0) at (0.5,2) {{\color{white}\large $0$}}; \node[main](1) at (1.5,2) {{\color{white}\large $0$}};
\node[main](2) at (4.5,2) {{\color{white}\large $0$}};
\node[main](3) at (8.5,2) {{\color{white}\large $0$}}; \node[main](4) at (9.5,2) {{\color{white}\large $0$}};
\node[main](5) at (12.5,2) {{\color{white}\large $0$}};
\node[main](6) at (16.5,2) {{\color{white}\large $0$}}; \node[main](7) at (17.5,2) {{\large $1$}};
\node[main](15) at (1.5,0.5) {{\large $2$}};
\node[main](14) at (4.5,0.5) {{\color{white}\large $0$}};
\node[main](13) at (8.5,0.5) {{\color{white}\large $0$}}; \node[main](12) at (9.5,0.5) {{\large $3$}};
\node[main](11) at (12.5,0.5) {{\color{white}\large $0$}};
\node[main](10) at (16.5,0.5) {{\color{white}\large $0$}}; \node[main](9) at (17.5,0.5) {{\color{white}\large $0$}};
\end{tikzpicture}
}
\end{center}

(i\hspace{-0.5mm}i-1)\ Suppose that $1\leq k\leq i-1$.
Each of the numbers $1,\dots,k-1$ can be placed into one of $q-2m+2$ boxes, except the upper left and the lower right boxes.
Each of the numbers $k+1,\dots,i-1$ and the pair $(i,j)$ can be placed into one of $q-2m+1$ boxes, except the upper left, the upper right, and the lower right boxes.
For the remaining numbers, each of the numbers $i+1,\dots,j-1$ can be placed into one of $q-2m$ boxes, except the upper left, the upper right, and the lower right boxes and the box containing $(i,j)$.
Each of the numbers $j+1,\dots,m$ can be placed into one of $q-2m+1$ boxes, except the upper left, the upper right, and the lower right boxes.
Therefore, in this case, the desired number is
\begin{align*}
(q-2m)^{j-i-1}(q-2m+1)^{m+i-j-k}(q-2m+2)^{k-1}.
\end{align*}

(i\hspace{-0.5mm}i-2)\ Suppose that $k=i$.
Since $\displaystyle x_i=x_j=\frac{q}{2}$, we have $x_i+x_j=0$, which is a contradiction.

(i\hspace{-0.5mm}i-3)\ Suppose that $i+1\leq k\leq j-1$.
Each of the numbers $1,\dots,i-1$ can be placed into one of $q-2m+2$ boxes, except the upper left and the lower right boxes.
The pair $(i,j)$ cannot be placed into the upper right box since $j>k$.
Then $(i,j)$ can be placed into one of $q-2m+1$ boxes, except the upper left, the upper right, and the lower right boxes.
For the remaining numbers, each of the numbers $i+1,\dots,k-1$ can be placed into one of $q-2m+1$ boxes, except the upper left and the lower right boxes and the box containing $(i,j)$.
Each of the numbers $k+1,\dots,j-1$ can be placed into one of $q-2m$ boxes, except the upper left, the upper right, and the lower right boxes and the box containing $(i,j)$.
Finally, each of the numbers $j+1,\dots,m$ can be placed into one of $q-2m+1$ boxes, except the upper left, the upper right, and the lower right boxes.
Therefore, in this case, the desired number is
\begin{align*}
(q-2m)^{j-k-1}(q-2m+1)^{m-i-j+k}(q-2m+2)^{i-1}.
\end{align*}

(i\hspace{-0.5mm}i-4)\ Suppose that $j+1\leq k\leq m$.
Each of the numbers $1,\dots,i-1$ and $(i,j)$ can be placed into one of $q-2m+2$ boxes, except the upper left and the lower right boxes.
Each of the numbers $i+1,\dots,j-1$ can be placed into one of $q-2m+1$ boxes, except the upper left and the lower right boxes and the box containing $(i,j)$.
Each of the numbers $j+1,\dots,k-1$ can be placed into one of $q-2m+2$ boxes, except the upper left and the lower right boxes.
Finally, each of the numbers $k+1,\dots,m$ can be placed into one of $q-2m+1$ boxes, except the upper left, the upper right, and the lower right boxes.
Therefore, in this case, the desired number is
\begin{align*}
(q-2m+1)^{m-i+j-k-1}(q-2m+2)^{i-j+k-1}.
\end{align*}

From the discussion above, by applying Lemma~\ref{hodai}, we have that
\begin{align*}
\left|M(\mathcal{A}^{(3)}_q)\right|
=&\,T^{j-i-1}(T+1)^{m+i-j}+\sum_{k=1}^{i-1}T^{j-i-1}(T+1)^{m+i-j-k}(T+2)^{k-1}\\
&+\sum_{k=i+1}^{j-1}T^{j-k-1}(T+1)^{m-i-j+k}(T+2)^{i-1}+\sum_{k=j+1}^{m}(T+1)^{m-i+j-k-1}(T+2)^{i-j+k-1}\\
=&\,T^{j-i-1}(T+1)^{m+i-j}+T^{j-i-1}(T+1)^{m-j+1}\left((T+2)^{i-1}-(T+1)^{i-1}\right)\\
&+(T+1)^{m-j+1}(T+2)^{i-1}\left((T+1)^{j-i-1}-T^{j-i-1}\right)\\
&+(T+1)^{j-i-1}(T+2)^{i}\left((T+2)^{m-j}-(T+1)^{m-j}\right)\\
=&\,(T+1)^{m-i}(T+2)^{i-1}+(T+1)^{j-i-1}(T+2)^{m+i-j}-(T+1)^{m-i-1}(T+2)^{i}\\
=&\,(T+1)^{j-i-1}(T+2)^{m+i-j}-(T+1)^{m-i-1}(T+2)^{i-1}\\
=&\,(T+1)^{j-i-1}(T+2)^{i-1}\left((T+2)^{m-j+1}-(T+1)^{m-j}\right),
\end{align*}
where $T:=q-2m$.

\section{Characteristic quasi-polynomial of restriction on $\{x_i-x_j=1\}$}\label{sec-quasi-poly-restxi=xj+1}

Let $\mathcal{A}^{(4)}=\mathcal{B}_m^{\{x_i-x_j=1\}}$ for $1\leq i<j \leq m$, and we prove the following theorem.
\begin{Them}\label{chara-quasi-del-by-xi=xj+1}
We have
\begin{align*}
\left|M(\mathcal{A}^{(4)}_q)\right|=(q-2m)^{m+i-j}(q-2m+1)^{j-i-1}
\end{align*}
for any $q\in\mathbb{Z}$ with $q\gg 0$.
\end{Them}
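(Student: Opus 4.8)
The plan is to adapt the box-and-circle bijection of Section~\ref{sec-count-method}, specializing it to the fixed condition $x_i=x_j+1$. Since $i<j$, this equality is exactly the single relation $x_i\neq x_j+1$ that is removed from the defining conditions of $\mathcal{B}_m$ and replaced by equality, while every other condition (in particular $x_s\neq x_t+1$ for all $s<t$ with $(s,t)\neq(i,j)$, together with all the antipodal conditions) stays in force. In the circle picture the equality $x_i=x_j+1$ says that the circle labeled $i$ is the clockwise successor of the circle labeled $j$. I would therefore treat the labels $j$ and $i$ as a single rigid \emph{block} occupying two consecutive circles, with $j$ preceding $i$ clockwise, carrying along the two unlabeled antipodal partners, and then place this block together with the remaining $m-2$ single numbers into the boxes exactly as in Section~\ref{sec-count-method}. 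A first bookkeeping check confirms that the number of boxes is again $q-2m+1$, since the block still contributes two labeled circles and their two partners.

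First I would treat the case $q$ odd, which should give a clean product with no internal summation. The main work is to translate the surviving conditions into box-admissibility for each object, as in the proofs of Theorems~\ref{chara-quasi-del-by-xi=0} and \ref{chara-quasi-del-by-xi=xj}. The decisive point is the local analysis of the circles flanking the block: the condition $x_s\neq x_t+1$ forces the circle immediately after $i$ to be unlabeled or to carry a label larger than $i$, and the circle immediately before $j$ to be unlabeled or to carry a label smaller than $j$. Hence a number $s$ with $i<s<j$ may legally sit adjacent to the block on \emph{either} side, whereas a number $s<i$ is barred from the slot just after $i$ and a number $s>j$ is barred from the slot just before $j$. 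I expect this asymmetry to yield exactly $q-2m$ admissible boxes for the block and for each of $1,\dots,i-1$ and $j+1,\dots,m$, and one extra admissible box, $q-2m+1$, for each of $i+1,\dots,j-1$; multiplying these independent choices produces $(q-2m)^{m+i-j}(q-2m+1)^{j-i-1}$.

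For $q$ even I would follow the dichotomy of Section~\ref{sec-count-method-even}, splitting according to whether some coordinate equals $q/2$. When no $x_k$ equals $q/2$ one repeats the odd-case count with one position suppressed; when $x_k=q/2$ for the necessarily unique index $k$ one inserts the extra circle labeled $k$ on the far right and records the induced forbidden boxes (the lower-right box, and the upper-right box for labels exceeding $k$), then sums the contributions over $k\in[m]$. Here one must handle the cases $k=i$ and $k=j$ with care: if $x_j=q/2$ then $x_i=q/2+1$, and if $x_i=q/2$ then $x_j=q/2-1$, so these are genuine subcases to be counted rather than contradictions. Applying Lemma~\ref{hodai} to the resulting geometric-type sums should collapse everything back to the same product $(q-2m)^{m+i-j}(q-2m+1)^{j-i-1}$, explaining why no parity split appears in the statement.

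The hardest step will be pinning down the precise box-admissibility counts around the block in both parities, and in particular justifying rigorously that the numbers strictly between $i$ and $j$ acquire exactly one extra admissible box relative to all the others, tracing this to the two-sided adjacency allowance established above. Keeping the antipodal-partner constraints of the block consistent at the top-to-bottom turn, and (in the even case) at the circle corresponding to $q/2$, is the delicate part of the argument. Once these local adjacency analyses are carried out correctly, the remaining algebra is routine and reduces, via Lemma~\ref{hodai}, to the stated closed form.
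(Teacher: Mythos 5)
Your odd-case plan is essentially the paper's own proof: the pair $(j,i)$ is treated as a rigid clockwise block, the numbers $s<i$ and $s>j$ each get $q-2m$ admissible placements, and each $s$ with $i<s<j$ gets one extra choice (either side of the block inside the box containing it), yielding $(q-2m)^{m+i-j}(q-2m+1)^{j-i-1}$ directly. That part is correct and matches Section~\ref{sec-quasi-poly-rest-xj=xi-1-odd}.

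The even case, however, contains a concrete error. You assert that both $k=i$ and $k=j$ (i.e.\ $x_i=q/2$ and $x_j=q/2$) are ``genuine subcases to be counted rather than contradictions.'' For $k=i$ this is true, and the paper devotes a separate subcase to it (with special geometry: the circle labeled $j$ is forced to the right end of the upper right box, and the numbers $i+1,\dots,j-1$ lose their extra choice there). But $k=j$ is a contradiction: if $x_j=q/2$ then $x_j=-x_j$ in $\mathbb{Z}_q$, hence $x_i=x_j+1=-x_j+1$, equivalently $x_i+x_j=1$, so the point lies on the hyperplane $\{x_i+x_j=1\}$. This hyperplane is not the deleted one, and its avoidance condition survives in the defining conditions of $M(\mathcal{A}^{(4)}_q)$ --- note that the line $x_s\neq -x_t,\ x_s\neq -x_t+1$ carries no exception for the pair $\{i,j\}$. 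So the $k=j$ case contributes zero, exactly as the paper records in subcase (i\hspace{-0.5mm}i-4). This is not cosmetic: the paper's even-case identity is obtained by summing the no-$q/2$ term with the contributions from $k\in\{1,\dots,i\}$, $k\in\{i+1,\dots,j-1\}$ and $k\in\{j+1,\dots,m\}$ and collapsing via Lemma~\ref{hodai}; if a nonzero term for $k=j$ were added, as your plan prescribes, the total would strictly exceed $(q-2m)^{m+i-j}(q-2m+1)^{j-i-1}$ and the telescoping would fail. As written, your even-case computation cannot close; it must begin by ruling out $x_j=q/2$.
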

The complement $M(\mathcal{A}^{(4)}_q)$ is the set of $(x_1,\dots,x_m)\in\mathbb{Z}_q^m$ satisfying the following conditions:
\begin{align*}
&x_i=x_j+1,\\
&x_s\neq 0,\ x_s\neq 1\ (s\in [m]),\\
&x_s\neq x_t\ (s,t\in [m],\ s\neq t,\ \{s,t\}\neq \{i,j\}),\\
&x_s\neq x_t+1\ (s,t\in [m],\ s<t,\ (s,t)\neq (i,j)),\\
&x_s\neq -x_t,\ x_s\neq -x_t+1\ (s,t\in [m],\ s\neq t).
\end{align*}
We fix the condition $x_i=x_j+1$ and count the number of elements in $(x_1,\dots,x_m)\in M(\mathcal{A}^{(4)}_q)$ using a modified version of the counting method described in Section \ref{sec-count-method}.
The condition $x_i=x_j+1$ means that the circle labeled with $j$ is placed just in front of the circle labeled with $i$ in a clockwise direction.
Therefore, once the box containing the number $i$ is determined, there is no need to specify the box containing the number $j$.
\begin{center}
\scalebox{0.8}{
\begin{tikzpicture}[main/.style = {draw, circle, very thick}] 
\draw [very thick] (0,0) rectangle (5,1); \draw [very thick] (0,1.5) rectangle (5,2.5);
\node[main](1) at (2.5,0.5) {{\color{white}\large $0$}}; \node[main](2) at (3.5,0.5) {{\color{white}\large $0$}}; \node[main](4) at (2.5,2) {{\large $j$}}; \node[main](5) at (3.5,2) {{\large $i$}};
\end{tikzpicture}
}
\end{center}

\subsection{The proof of Theorem \ref{chara-quasi-del-by-xi=xj+1} when $q$ is odd}\label{sec-quasi-poly-rest-xj=xi-1-odd}
We assume that $q$ is odd.
Making a modification to the procedure in Section \ref{sec-count-method-odd}, we create boxes and circles, and take the corresponding element $(x_1,\dots,x_m)\in M(\mathcal{A}^{(4)}_q)$.
\begin{itemize}
\item Prepare $q-2m+1$ boxes side by side, $\displaystyle \frac{q+1}{2}-m$ on the upper side and $\displaystyle \frac{q+1}{2}-m$ on the lower side.
Place each of the numbers $1,\dots,j-1,j+1,\dots,m$ into one of $q-2m$ boxes, except the upper left box.
The order to place the number is $i$ first, then the numbers from $1$ to $m$ that are neither $i$ nor $j$.
\item We create circles by the same procedure described in Section \ref{sec-count-method-odd}.
When placing each of the numbers $i+1,\dots,j-1$ into the same box that contains the number $i$, we have two choices either before $i$ or after $i$ in a clockwise direction.
If $s$ $(i+1\leq s\leq j-1)$ is before $i$, then the circle labeled with $s$ is placed before the circle labeled with $j$.
Since $x_j=x_s+1$ and $s<j$, no contradiction occurs.
If $s$ $(i+1\leq s\leq j-1)$ is after $i$, then the circle labeled with $s$ is placed after the circle labeled with $i$.
Since $x_s=x_i+1$ and $i<s$, no contradiction occurs.
We note that, in the box containing $i$, the circles labeled with $1,\dots,i-1$ are placed before the circle labeled with $j$ and the circles labeled with $j+1,\dots,m$ are placed after the circle labeled with $i$. 
\begin{center}
\scalebox{0.8}{
\begin{tikzpicture}[main/.style = {draw, circle, very thick}] 
\draw [very thick] (0,0) rectangle (7,1); \draw [very thick] (0,1.5) rectangle (7,2.5);
\node[main](1) at (2.5,0.5) {{\color{white}\large $0$}}; \node[main](2) at (3.5,0.5) {{\color{white}\large $0$}}; \node[main](3) at (4.5,0.5) {{\color{white}\large $0$}};
\node[main](4) at (2.5,2) {{\large $s$}}; \node[main](5) at (3.5,2) {{\large $j$}}; \node[main](6) at (4.5,2) {{\large $i$}};
\end{tikzpicture}
}
\qquad or\ \qquad
\scalebox{0.8}{
\begin{tikzpicture}[main/.style = {draw, circle, very thick}] 
\draw [very thick] (0,0) rectangle (7,1); \draw [very thick] (0,1.5) rectangle (7,2.5);
\node[main](1) at (2.5,0.5) {{\color{white}\large $0$}}; \node[main](2) at (3.5,0.5) {{\color{white}\large $0$}}; \node[main](3) at (4.5,0.5) {{\color{white}\large $0$}};
\node[main](4) at (2.5,2) {{\large $j$}}; \node[main](5) at (3.5,2) {{\large $i$}}; \node[main](6) at (4.5,2) {{\large $s$}};
\end{tikzpicture}
}
\end{center}
\item When placing the circles, place the circle labeled with $j$ clockwise just in front of the circle labeled with $i$.
In addition, place an unlabeled circle on the opposite side of the circle labeled with $j$ since $x_j\neq -x_s$ for any $s\in[m]\setminus\{j\}$.
\end{itemize}
For example, if  $m=6$, $q=17$, $i=2$, and $j=5$, then the boxes and the numbers
\begin{center}
\scalebox{0.8}{
\begin{tikzpicture}[main/.style = {draw, circle, very thick}] 
\draw [very thick] (0,0) rectangle (3,1); \draw [very thick] (4,0) rectangle (11,1); \draw [very thick] (12,0) rectangle (15,1);
\draw [very thick] (0,1.5) rectangle (3,2.5); \draw [very thick] (4,1.5) rectangle (11,2.5); \draw [very thick] (12,1.5) rectangle (15,2.5);
\node(1) at (4.5,2) {{\large $4$}}; \node(2) at (5.5,2) {{\large $2$}}; \node(3) at (6.5,2) {{\large $3$}}; \node(4) at (7.5,2) {{\large $6$}};
\node(1) at (4.5,0.5) {{\large $1$}};
\end{tikzpicture}
}
\end{center}
correspond to the boxes and circles
\begin{center}
\scalebox{0.8}{
\begin{tikzpicture}[main/.style = {draw, circle, very thick}] 
\draw [very thick] (0,0) rectangle (3,1); \draw [very thick] (4,0) rectangle (11,1); \draw [very thick] (12,0) rectangle (15,1);
\draw [very thick] (0,1.5) rectangle (3,2.5); \draw [very thick] (4,1.5) rectangle (11,2.5); \draw [very thick] (12,1.5) rectangle (15,2.5);
\node[main](0) at (0.5,2) {{\color{white}\large $0$}};
\node[main](1) at (4.5,2) {{\color{white}\large $0$}}; \node[main](2) at (5.5,2) {{\large $4$}};\node[main](3) at (6.5,2) {{\large $5$}}; \node[main](4) at (7.5,2) {{\large $2$}}; \node[main](5) at (8.5,2) {{\large $3$}}; \node[main](6) at (9.5,2) {{\large $6$}}; \node[main](7) at (10.5,2) {{\color{white}\large $0$}};
\node[main](8) at (12.5,2) {{\color{white}\large $0$}};
\node[main](16) at (4.5,0.5) {{\color{white}\large $0$}}; \node[main](15) at (5.5,0.5) {{\color{white}\large $0$}}; \node[main](14) at (6.5,0.5) {{\color{white}\large $0$}}; \node[main](13) at (7.5,0.5) {{\color{white}\large $0$}}; \node[main](12) at (8.5,0.5) {{\color{white}\large $0$}}; \node[main](11) at (9.5,0.5) {{\color{white}\large $0$}}; \node[main](10) at (10.5,0.5) {{\large $1$}};
\node[main](9) at (12.5,0.5) {{\color{white}\large $0$}};
\end{tikzpicture}
}
\end{center}
while this corresponds to the element $(x_1,x_2,x_3,x_4,x_5,x_6)=(10,4,5,2,3,6)$.

We count the number of elements $(x_1,\dots,x_m)\in M(\mathcal{A}^{(4)}_q)$.
Recall that we place the number $i$ first and then, place the numbers from $1$ to $m$ that are neither $i$ nor $j$.
There are $q-2m$ boxes that can contain the number $i$, except the upper left box.
Then there are $q-2m$ boxes that can contain the numbers $1,\dots,i-1$, except the upper left box.
For the numbers $i+1,\dots,j-1$, there are $q-2m+1$ choices, since we have $q-2m$ boxes except the upper left box and there are two choices for the box that contains the number $i$.
Each of the numbers $j+1,\dots,m$ can be placed anywhere except the upper left box, so $q-2m$ boxes can be chosen.
Therefore, we have that
\begin{align*}
\left|M(\mathcal{A}^{(4)}_q)\right|=(q-2m)^{m+i-j}(q-2m+1)^{j-i-1}.
\end{align*}

\subsection{The proof of Theorem \ref{chara-quasi-del-by-xi=xj+1} when $q$ is even}\label{sec-quasi-poly-rest-xj=xi-1-even}
In this subsection, we assume that $q$ is even.
We count the number of elements $(x_1,\dots,x_m)\in M(\mathcal{A}^{(4)}_q)$ by dividing the cases by whether there exists an index $k$ such that $\displaystyle x_k=\frac{q}{2}$ or not.

(i)\ Suppose that there is no index $k\in[m]$ such that $\displaystyle x_k=\frac{q}{2}$.
Prepare $q-2m$ boxes, $\displaystyle \frac{q}{2}-m$ on the upper side and $\displaystyle \frac{q}{2}-m$ on the lower side, and one unlabeled circle corresponding to the element $\displaystyle \frac{q}{2}\in\mathbb{Z}_q$ on the right side of the boxes.
Place each of the numbers $1,\dots,j-1,j+1,\dots,m$ into one of the $q-2m-1$ boxes, except the upper left box.
As in the case when $q$ is odd, when placing each of the numbers $i+1,\dots,j-1$ into the same box as $i$, we have two choices either before $i$ or after $i$ in a clockwise direction.
Then place the circle labeled with $j$ clockwise just in front of the circle labeled with $i$, while  an unlabeled circle on the opposite side of the circle labeled with $j$.

There are $q-2m-1$ boxes that can contain the number $i$ and the numbers $1,\dots,i-1,j+1,\dots,m$, except the upper left box.
For the numbers $i+1,\dots,j-1$, there are $q-2m$ choices, since we have $q-2m$ boxes except the upper left box and there are two choices for the box containing $i$.
Therefore, in this case, the desired number is
\begin{align*}
(q-2m-1)^{m+i-j}(q-2m)^{j-i-1}.
\end{align*}

(i\hspace{-0.5mm}i)\ Suppose that there exists an index $k\in[m]$ such that $\displaystyle x_k=\frac{q}{2}$.
Create boxes and circles according to the following rules, and take the corresponding element $(x_1,\dots,x_m)\in M(\mathcal{A}^{(4)}_q)$.
\begin{itemize}
\item Prepare $q-2m+2$ boxes, $\displaystyle \frac{q}{2}-(m-1)$ on the upper side and $\displaystyle \frac{q}{2}-(m-1)$ on the lower side, and one circle labeled with $k$ on the right side of the boxes.
Place each of the numbers that are neither $j$ nor $k$ into one of $q-2m+1$ boxes, except the upper left box.
The order to place the number is $i$ first, then the numbers from $1$ to $m$ that are not $i$, $j$, or $k$.
\item As in the case when $q$ is odd, when placing each of the numbers $i+1,\dots,j-1$ into the same box as $i$, we have two choices.
\end{itemize}
For example, if $m=5$, $q=16$, $i=2$, $j=4$, and $k=1$, then to the boxes and numbers
\begin{center}
\scalebox{0.8}{
\begin{tikzpicture}[main/.style = {draw, circle, very thick}] 
\draw [very thick] (0,0) rectangle (4,1); \draw [very thick] (5,0) rectangle (9,1); \draw [very thick] (10,0) rectangle (14,1); \draw [very thick] (15,0) rectangle (19,1);
\draw [very thick] (0,1.5) rectangle (4,2.5); \draw [very thick] (5,1.5) rectangle (9,2.5); \draw [very thick] (10,1.5) rectangle (14,2.5); \draw [very thick] (15,1.5) rectangle (19,2.5);
\node[main](0) at (20,1.25) {{\large $1$}};
\node(1) at (5.5,2) {{\large $4$}}; \node(2) at (10.5,0.5) {{\large $3$}};  \node(3) at (11.5,0.5) {{\large $2$}};
\end{tikzpicture}
}
\end{center}
correspond to the boxes and circles
\begin{center}
\scalebox{0.8}{
\begin{tikzpicture}[main/.style = {draw, circle, very thick}] 
\draw [very thick] (0,0) rectangle (4,1); \draw [very thick] (5,0) rectangle (9,1); \draw [very thick] (10,0) rectangle (14,1); \draw [very thick] (15,0) rectangle (19,1);
\draw [very thick] (0,1.5) rectangle (4,2.5); \draw [very thick] (5,1.5) rectangle (9,2.5); \draw [very thick] (10,1.5) rectangle (14,2.5); \draw [very thick] (15,1.5) rectangle (19,2.5);
\node[main](8) at (20,1.25) {{\large $1$}};
\node[main](0) at (0.5,2) {{\color{white}\large $0$}};
\node[main](1) at (5.5,2) {{\color{white}\large $0$}}; \node[main](2) at (6.5,2) {{\large $4$}};
\node[main](3) at (10.5,2) {{\color{white}\large $0$}}; \node[main](4) at (11.5,2) {{\color{white}\large $0$}}; \node[main](5) at (12.5,2) {{\color{white}\large $0$}}; \node[main](6) at (13.5,2) {{\color{white}\large $0$}};
\node[main](7) at (15.5,2) {{\color{white}\large $0$}};
\node[main](15) at (5.5,0.5) {{\color{white}\large $0$}}; \node[main](14) at (6.5,0.5) {{\color{white}\large $0$}};
\node[main](13) at (10.5,0.5) {{\color{white}\large $0$}}; \node[main](12) at (11.5,0.5) {{\large $2$}}; \node[main](11) at (12.5,0.5) {{\large $5$}}; \node[main](10) at (13.5,0.5) {{\large $3$}};
\node[main](9) at (15.5,0.5) {{\color{white}\large $0$}};
\end{tikzpicture}
}
\end{center}
while this corresponds to the element $(x_1,x_2,x_3,x_4,x_5)=(8,12,10,2,11)$.
\begin{itemize}
\item For the same reasons discussed in Section \ref{sec-count-method-even}, the lower right box does not contain any numbers, since $x_k\neq -x_s+1$ for any $s\in [m]$ with $k\neq s$.
The upper right box does not contain any numbers larger than $k$, since $x_k\neq x_t+1$ for any $t\in [m]$ with $k<t$.
\end{itemize}

(i\hspace{-0.5mm}i-1)\ Suppose that $1\leq k\leq i-1$.
Since $i>k$, the number $i$ can be placed into one of $q-2m-1$ boxes, except the upper left, the upper right, and the lower right boxes.
There are $q-2m$ boxes that can contain the numbers $1,\dots,k-1$, except the upper left and the lower right boxes.
Then there are $q-2m-1$ boxes that can contain the numbers $k+1,\dots,i-1$, except the upper left, the upper right, and the lower right boxes.
Since each of $i+1,\dots,j-1$ cannot be placed in the upper left, the upper right, and the lower right boxes, and there are two choices when placing each of $i+1,\dots,j-1$ into the box containing $i$, there are $q-2m$ possible choices for each of the numbers $i+1,\dots,j-1$.
Finally, there are $q-2m-1$ boxes that can contain the numbers $j+1,\dots,m$, except the upper left, the upper right, and the lower right boxes.
Therefore, in this case, the desired number is
\begin{align*}
(q-2m-1)^{m+i-j-k}(q-2m)^{k-i+j-2}.
\end{align*}

(i\hspace{-0.5mm}i-2)\ Suppose that $k=i$.
In this case, since $x_i=x_j+1$, the circle labeled with $j$ is at the right edge of the upper right box.
Thus the numbers greater than $j$ cannot contain in the upper right box.
In addition, we note that there is only one choice when placing each of the numbers $i+1,\dots,j-1$ into the same box as $j$.
\begin{center}
\scalebox{0.8}{
\begin{tikzpicture}[main/.style = {draw, circle, very thick}] 
\draw [very thick] (0,0) rectangle (4,1); \draw [very thick] (0,1.5) rectangle (4,2.5);
\node[main](0) at (5,1.25) {{\large $i$}}; \node[main](1) at (3.5,2) {\large $j$}; \node[main](2) at (3.5,0.5) {{\color{white}\large $0$}};
\end{tikzpicture}
}
\end{center}
Therefore there are $q-2m$ boxes that can contain the numbers $1,\dots,i-1,i+1,\dots,j-1$, except the upper left and the lower right boxes.
Each of the numbers $j+1,\dots,m$ can be placed except the upper left, the upper right, and the lower right boxes, so $q-2m-1$ boxes can be chosen.
In this case, the desired number is
\begin{align*}
(q-2m-1)^{m-j}(q-2m)^{j-2}.
\end{align*}
This number is the same as the desired number in (i\hspace{-0.5mm}i-1), substituting $i$ for $k$.

(i\hspace{-0.5mm}i-3)\ Suppose that $i+1\leq k\leq j-1$.
Since $i<k$, the number $i$ can be placed into one of $q-2m$ boxes, except the upper left and the lower right boxes.
Also there are $q-2m$ boxes that can contain the numbers $1,\dots,i-1$, except the upper left and the lower right boxes.
When placing each of $i+1,\dots,j-1$ into the box containing $i$, we have two choices.
In addition, each of $i+1,\dots,k-1$ cannot be contained in the upper left and the lower right boxes, and each of $k+1,\dots,j-1$ cannot be contained in the upper left, the upper right, and the lower right boxes.
Thus there are $q-2m+1$ choices for each of the numbers $i+1,\dots,k-1$ and there are $q-2m$ choices for the numbers $k+1,\dots,j-1$.
Finally, there are $q-2m-1$ boxes that can contain the numbers $j+1,\dots,m$, except the upper left, the upper right, and the lower right boxes.
Therefore, in this case, the desired number is
\begin{align*}
(q-2m-1)^{m-j}(q-2m)^{i+j-k-1}(q-2m+1)^{k-i-1}.
\end{align*}

(i\hspace{-0.5mm}i-4)\ Suppose that $k=j$.
Since $\displaystyle x_i=x_j+1$ and $x_j=-x_j$, we have $x_i=-x_j+1$, which is a contradiction.

(i\hspace{-0.5mm}i-5)\ Suppose that $j+1\leq k\leq m$.
The number $i$ can be placed into one of $q-2m$ boxes, except the upper left and the lower right boxes.
There are $q-2m$ boxes that can contain the numbers $1,\dots,i-1$, except the upper left and the lower right boxes.
Each of $i+1,\dots,j-1$ cannot be placed into the upper left and the lower right boxes, while we have two choices when placing each of $i+1,\dots,j-1$ into the box containing $i$.
Thus there are $q-2m+1$ possible choices for each of the numbers $i+1,\dots,j-1$.
For the remaining numbers, there are $q-2m$ boxes that can contain the numbers $j+1,\dots,k-1$, except the upper left and the lower right boxes.
Finally, there are $q-2m-1$ boxes that can contain the numbers $k+1,\dots,m$, except the upper left, the upper right, and the lower right boxes.
Therefore, in this case, the desired number is
\begin{align*}
(q-2m-1)^{m-k}(q-2m)^{i-j+k-1}(q-2m+1)^{j-i-1}.
\end{align*}

From the discussion above, by applying Lemma~\ref{hodai}, we have that
\begin{align*}
\left|M(\mathcal{A}^{(4)}_q)\right|
=&\,(T-1)^{m+i-j}T^{j-i-1}+\sum_{k=1}^{i}(T-1)^{m+i-j-k}T^{k-i+j-2}\\
&+\sum_{k=i+1}^{j-1}(T-1)^{m-j}T^{i+j-k-1}(T+1)^{k-i-1}+\sum_{k=j+1}^{m}(T-1)^{m-k}T^{i-j+k-1}(T+1)^{j-i-1}\\
=&\,(T-1)^{m+i-j}T^{j-i-1}+(T-1)^{m-j}T^{j-i-1}\left(T^{i}-(T-1)^{i}\right)\\
&+(T-1)^{m-j}T^{i}\left((T+1)^{j-i-1}-T^{j-i-1}\right)+T^{i}(T+1)^{j-i-1}\left(T^{m-j}-(T-1)^{m-j}\right)\\
=&\,T^{m+i-j}(T+1)^{j-i-1},
\end{align*}
where $T:=q-2m$.

\section{Characteristic quasi-polynomial of restriction on $\{x_i+x_j=0\}$}\label{sec-quasi-poly-restxi=-xj}
Let $\mathcal{A}^{(5)}=\mathcal{B}_m^{\{x_i+x_j=0\}}$ for $1\leq i<j \leq m$, and we prove the following theorem.
\begin{Them}\label{chara-quasi-del-by-xi=-xj}
We have
\begin{align*}
\left|M(\mathcal{A}^{(5)}_q)\right|=
\begin{cases}
(q-2m)^{m-j}(q-2m+1)^{j-i}((q-2m+2)^{i-1}-(q-2m+1)^{i-2}) \;\;&(q\ \text{is odd}), \\
(q-2m)^{m-j+1}(q-2m+1)^{j-i-1}(q-2m+2)^{i-1} &(q\ \text{is even}).  
\end{cases} 
\end{align*}
\end{Them}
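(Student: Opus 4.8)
The plan is to adapt the box-and-circle counting method of Section~\ref{sec-count-method}, exactly as was done for the restriction on $\{x_i-x_j=0\}$ in the proof of Theorem~\ref{chara-quasi-del-by-xi=xj}, but now encoding the defining condition $x_i=-x_j$ rather than $x_i=x_j$. The key structural observation is that $x_i=-x_j$ forces the circle labeled $i$ and the circle labeled $j$ to sit at diametrically opposite positions of the cyclic diagram; thus circle $i$ plays the role of the normally unlabeled circle opposite circle $j$ and vice versa, and fixing the box of one of them determines the box of the other as its mirror image. First I would record the placement rules in this setting: the upper-left box again carries no label (since $x_s\neq 1$), the remaining conditions $x_s\neq -x_t$ for $\{s,t\}\neq\{i,j\}$ keep an unlabeled circle opposite every other labeled circle, and the conditions $x_i\neq -x_j+1$ and $x_j\neq -x_i+1$ are automatically satisfied because under $x_i=-x_j$ they reduce to $0\neq 1$. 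The genuinely new restriction is $x_i\neq x_j+1$ (coming from $s=i<t=j$), which under $x_i=-x_j$ forbids exactly one extreme mirror placement of the pair and will be the source of the subtraction term in the odd case.

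Next I would split on the parity of $q$. For odd $q$ there is no self-opposite position other than $0$, so the pair occupies a genuine mirror pair of circles; I would place one representative of the pair (the other being its mirror) and then count the admissible boxes for the blocks $\{1,\dots,i-1\}$, $\{i+1,\dots,j-1\}$ and $\{j+1,\dots,m\}$, tracking the boxes excluded by the upper-left condition and by the mirror placement. As in Theorem~\ref{chara-quasi-del-by-xi=xj}, I expect a secondary case split according to whether one of the numbers $1,\dots,i-1$ occupies a distinguished box (playing the role of the lower-right box there); this should produce the factor $(q-2m+2)^{i-1}-(q-2m+1)^{i-2}$, with the remaining blocks contributing $(q-2m+1)^{j-i}$ and $(q-2m)^{m-j}$.

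For even $q$ I would divide further according to whether some index $k$ satisfies $x_k=\tfrac{q}{2}$, following Section~\ref{sec-count-method-even}. When no such $k$ exists the count is a single product, obtained as in the odd case but with one fewer available position. When $x_k=\tfrac{q}{2}$ I would isolate the self-opposite circle at $\tfrac{q}{2}$ and split into the sub-cases $1\leq k\leq i-1$, $i+1\leq k\leq j-1$ and $j+1\leq k\leq m$; the sub-cases $k=i$ and $k=j$ are impossible, since either would force $x_i=x_j=\tfrac{q}{2}$, contradicting $x_i\neq x_j$. Each admissible sub-case yields a monomial in $q-2m$, $q-2m+1$ and $q-2m+2$, and summing over $k$ via Lemma~\ref{hodai} should telescope to the subtraction-free product $(q-2m)^{m-j+1}(q-2m+1)^{j-i-1}(q-2m+2)^{i-1}$.

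The main obstacle will be the careful bookkeeping of forbidden boxes caused by the asymmetry between $i$ and $j$: although the relation $x_i=-x_j$ is symmetric, the ordering $i<j$ enters through the single inequality $x_i\neq x_j+1$ and through the conditions $x_s\neq x_t+1$ linking the pair to the intermediate numbers $i+1,\dots,j-1$. Determining, in each parity and each sub-case, precisely which boxes the pair and these intermediate numbers may occupy is the delicate step on which the exponents in the final formula depend; verifying that the even-case sub-sums collapse to a product with no subtraction, in contrast to the odd case, will serve as the final consistency check.
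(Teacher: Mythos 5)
Your proposal follows essentially the same route as the paper's own proof: the same box-and-circle encoding with circle $j$ forced opposite circle $i$, the same odd-case split on whether one of $1,\dots,i-1$ occupies the lower-right box (which, via the constraint $x_i\neq x_j+1$ forbidding $x_i=\tfrac{q+1}{2}$, produces the subtraction term), and the same even-case decomposition over $k$ with $x_k=\tfrac{q}{2}$ (with $k=i$ and $k=j$ excluded by $x_i\neq x_j$, exactly as you argue) summed by Lemma~\ref{hodai}. The bookkeeping you defer is precisely what the paper carries out in its rules (a)--(c) on forbidden boxes, and your anticipated formulas and mechanisms all match.
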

The complement $M(\mathcal{A}_q^{(5)})$ is the set of $(x_1,\dots,x_m)\in\mathbb{Z}_q^m$ satisfying the following conditions:
\begin{align*}
&x_j=-x_i,\\
&x_s\neq 0,\ x_s\neq 1\ (s\in [m]),\\
&x_s\neq x_t\ (s,t\in [m],\ s\neq t),\\
&x_s\neq x_t+1\ (s,t\in [m],\ s<t),\\
&x_s\neq -x_t,\ x_s\neq -x_t+1\ (s,t\in [m],\ s\neq t,\ \{s,t\}\neq \{i,j\}).
\end{align*}
We fix the condition $x_j=-x_i$ and count the number of elements in $(x_1,\dots,x_m)\in M(\mathcal{A}_q)$ using a modified version of the counting method described in Section \ref{sec-count-method}.
The condition $x_j=-x_i$ means that the circle labeled with $j$ is placed just on the opposite side of the circle labeled with $i$. 
Therefore, once the box containing the number $i$ is determined, there is no need to specify the box containing the number $j$.

\subsection{The proof of Theorem \ref{chara-quasi-del-by-xi=-xj} when $q$ is odd}
We assume that $q$ is odd. 
\begin{itemize}
\item Prepare $q-2m+3$ boxes side by side, $\displaystyle \frac{q+1}{2}-(m-1)$ on the upper side and $\displaystyle \frac{q+1}{2}-(m-1)$ on the lower side. 
Place each of the numbers $1,\dots,j-1,j+1,\ldots,m$ in one of $q-2m+2$ boxes, avoiding the upper left box. 
\item 
For the circle labeled with $i$, place the circle labeled with $j$ exactly on the opposite side of it. 
For the circles whose labels are not $i$, place the unlabeled circles on the opposite side of the labeled circles arranged. 
Then the number in each of the lower boxes is placed in descending order, starting from next of the circles placed in this way. 
Also rewrite each number as a circle labeled with the same number and place the unlabeled circles on the opposite side.
\end{itemize}
For example, if $m=5$, $q=13$, $i=2$ and $j=3$, then the boxes and the numbers 
\begin{center}
\scalebox{0.8}{
\begin{tikzpicture}[main/.style = {draw, circle, very thick}] 
\draw [very thick] (0,0) rectangle (4,1); \draw [very thick] (5,0) rectangle (9,1); \draw [very thick] (10,0) rectangle (14,1);
\draw [very thick] (0,1.5) rectangle (4,2.5); \draw [very thick] (5,1.5) rectangle (9,2.5); \draw [very thick] (10,1.5) rectangle (14,2.5);
\node(1) at (5.5,2) {{\large $2$}}; \node(2) at (10.5,0.5) {{\large $4$}}; \node(3) at (0.5,0.5) {{\large $5$}}; \node(4) at (5.5,0.5) {{\large $1$}};
\end{tikzpicture}
}
\end{center}
correspond to the boxes and circles 
\begin{center}
\scalebox{0.8}{
\begin{tikzpicture}[main/.style = {draw, circle, very thick}] 
\draw [very thick] (0,0) rectangle (4,1); \draw [very thick] (5,0) rectangle (9,1); \draw [very thick] (10,0) rectangle (14,1);
\draw [very thick] (0,1.5) rectangle (4,2.5); \draw [very thick] (5,1.5) rectangle (9,2.5); \draw [very thick] (10,1.5) rectangle (14,2.5);
\node[main](1) at (0.5,2) {{\color{white}\large $0$}}; \node[main](2) at (1.5,2) {{\color{white}\large $0$}};
\node[main](3) at (5.5,2) {{\color{white}\large $0$}}; \node[main](4) at (6.5,2) {{\large $2$}};\node[main](5) at (7.5,2) {{\color{white}\large $0$}}; 
\node[main](7) at (10.5,2) {{\color{white}\large $0$}}; \node[main](8) at (11.5,2) {{\color{white}\large $0$}};
\node[main](9) at (1.5,0.5) {{\large $5$}};
\node[main](10) at (5.5,0.5) {{\color{white}\large $0$}}; \node[main](11) at (6.5,0.5) {{\large $3$}};\node[main](12) at (7.5,0.5) {{\large $1$}};
\node[main](14) at (10.5,0.5) {{\color{white}\large $0$}}; \node[main](15) at (11.5,0.5) {{\large $4$}};
\end{tikzpicture}
}
\end{center}
while this corresponds to the element $(x_1,x_2,x_3,x_4,x_5)=(9,3,10,7,12)$.

We count the number of elements $(x_1,\ldots,x_m) \in M(\mathcal{A}_q^{(5)})$. 
Then we should obey the following rules: 
\begin{itemize}
\item[(a)] The number $i$ cannot be placed in the lower left box. 
Indeed, if $i$ is placed there, then there is no number $k$ with $k>i$ next to $i$ placed in the same box; otherwise $x_j=-x_k+1$ holds, a contradiction. 
Even if there is no such number, i.e., $i$ is the leftmost number placed in the lower left box, then $x_j=1$, a contradiction. 
\item[(b)] The number $k$ with $k>i$ cannot be placed in the box that contains $i$. 
Indeed, if $k$ is placed there just next to $i$, then $x_j=-x_k+1$ holds, a contradiction. 
\item[(c)] The number $k$ with $k>j$ cannot be placed in the box that contains $j$, which is the opposite one of the box that contains $i$. 
Indeed, if $k$ is placed there just next to $j$, then $x_i=-x_k+1$ holds, a contradiction. 
\end{itemize}

\begin{figure}[h]
\begin{center}
\begin{minipage}{0.45\linewidth}
\scalebox{0.8}{
\begin{tikzpicture}[main/.style = {draw, circle, very thick}] 
\draw [very thick] (0,0) rectangle (4,1); \draw [very thick] (0,1.5) rectangle (4,2.5); 
\node[main] at (0.5,2) {{\color{white}\large $0$}}; \node[main] at (1.5,2) {{\color{white}\large $0$}}; \node[main] at (2.5,2) {{\large $j$}};
\node[main] at (1.5,0.5) {{\large $k$}};\node[main] at (2.5,0.5) {{\large $i$}};
\end{tikzpicture}}
or 
\scalebox{0.8}{
\begin{tikzpicture}[main/.style = {draw, circle, very thick}] 
\draw [very thick] (0,0) rectangle (4,1); \draw [very thick] (0,1.5) rectangle (4,2.5); 
\node[main] at (0.5,2) {{\color{white}\large $0$}}; \node[main] at (1.5,2) {{\large $j$}};
\node[main] at (1.5,0.5) {{\large $i$}};
\end{tikzpicture}}
\caption{(a)}
\end{minipage}
\begin{minipage}{0.25\linewidth}
\scalebox{0.8}{
\begin{tikzpicture}[main/.style = {draw, circle, very thick}] 
\draw [very thick] (0,0) rectangle (4,1); \draw [very thick] (0,1.5) rectangle (4,2.5); 
\node[main](2) at (2.5,2) {{\large $j$}};
\node[main](4) at (2.5,0.5) {{\large $i$}}; 
\node[main] at (1.5,0.5) {{\large $k$}};
\node[main] at (1.5,2) {{\color{white}\large $0$}};
\end{tikzpicture}}
\caption{(b)}
\end{minipage}
\begin{minipage}{0.25\linewidth}
\scalebox{0.8}{
\begin{tikzpicture}[main/.style = {draw, circle, very thick}] 
\draw [very thick] (0,0) rectangle (4,1); \draw [very thick] (0,1.5) rectangle (4,2.5); 
\node[main](2) at (2.5,2) {{\large $i$}};
\node[main](4) at (2.5,0.5) {{\large $j$}}; 
\node[main] at (1.5,0.5) {{\large $k$}};
\node[main] at (1.5,2) {{\color{white}\large $0$}};
\end{tikzpicture}}
\caption{(c)}
\end{minipage}
\end{center}
\end{figure}

(i)\ Suppose that there is no number $1,\ldots,i-1$ in the lower right box. 
There are $q-2m+1$ boxes that can contain the numbers $1,\ldots,i-1$, which are the boxes except the upper left and the lower right ones. 
In this case, the number $i$ cannot be placed in the lower right box. Indeed, if $i$ is contained in that box, then $i$ is the  rightmost number and $\displaystyle x_i=\frac{q+1}{2}$, so we have $\displaystyle x_j=-\frac{q+1}{2}=\frac{q-1}{2}$, a contradiction to $x_i \neq x_j+1$.
Hence, there are $q-2m$ boxes that can contain $i$, which are the boxes except the upper left, the lower right and the lower left (see (a)). 
There are $q-2m+1$ boxes that can contain the numbers $i+1,\ldots,j-1$, which are the boxes except the upper left and the one containing $i$ (see (b)). 
There are $q-2m$ boxes that can contain the numbers $j+1,\ldots,m$, which are the boxes except the upper left and the ones containins $i$ and $j$ (see (b) and (c)). 
Therefore the desired number is $$(q-2m+1)^{i-1} \cdot (q-2m) \cdot (q-2m+1)^{j-i-1} \cdot (q-2m)^{m-j}=(q-2m)^{m-j+1}(q-2m+1)^{j-2}.$$ 

(i\hspace{-0.5mm}i)\ Suppose one of the numbers $1,\ldots,i-1$ is in the lower right box. 
The total number to place $1,\ldots,i-1$ in this case is $(q-2m+2)^{i-1}-(q-2m+1)^{i-1}$. 
Unlike the above case, we can place $i$ in the lower right box since some number is placed at the right to $i$. 
Hence, there are $q-2m+1$ boxes that can contain $i$. 
Similarly to the above, there are $q-2m+1$ boxes that can contain the numbers $i+1,\ldots,j-1$, 
and there are $q-2m$ boxes that can contain the numbers $j+1,\ldots,m$. 
Therefore the desired number is $$(q-2m)^{m-j}(q-2m+1)^{j-i}((q-2m+2)^{i-1}-(q-2m+1)^{i-1}).$$

For the discussion above, we have that 
\begin{align*}
\left| M(\mathcal{A}_q^{(5)}) \right|&= T^{m-j+1}(T+1)^{j-2}+T^{m-j}(T+1)^{j-i}((T+2)^{i-1}-(T+1)^{i-1}) \\
&= T^{m-j}(T+1)^{j-i}((T+2)^{i-1}-(T+1)^{i-2}), 
\end{align*}
where $T:=q-2m$. 

\subsection{The proof of Theorem \ref{chara-quasi-del-by-xi=-xj} when $q$ is even}\label{sec-count-method-x_i+x_j=0-even}
In this subsection, we assume that $q$ is even. 
We count the number of elements $(x_1,\dots,x_m)\in M(\mathcal{A}_q^{(5)})$ by dividing the cases by whether there exists an index $k$ such that $\displaystyle x_k=\frac{q}{2}$ or not.

(i)\ Suppose that there is no index $k\in[m]$ such that $\displaystyle x_k=\frac{q}{2}$. 
Prepare $q-2m+2$ boxes, $\displaystyle \frac{q}{2}-(m-1)$ on the upper side and $\displaystyle \frac{q}{2}-(m-1)$ on the lower side, and one unlabeled circle corresponding to the element $\displaystyle \frac{q}{2} \in \mathbb{Z}_q$ on the right side of the boxes. 
Put each of the numbers $1,\ldots,j-1,j+1,\ldots,m$ into one of the $q-2m+1$ boxes, except the upper left box. 
As in the case when $q$ is odd, the number $i$ cannot be placed on the lower left box, and each of the numbers $i+1,\ldots,j-1$ cannot be placed in the box that contain $i$. Moreover, each of the numbers $j+1,\ldots,m$ can be placed in neither the box that contain $i$ nor the box that contain $j$. 
Therefore, in this case, the desired number is $$(q-2m-1)^{m-j}(q-2m)^{j-i}(q-2m+1)^{i-1}.$$ 

\bigskip

(i\hspace{-0.5mm}i)\ Suppose that there exists an index $k\in[m]$ such that $\displaystyle x_k=\frac{q}{2}$.
Create boxes and circles according to the following rules, and take the corresponding element $(x_1,\dots,x_m)\in M(\mathcal{A}_q^{(5)})$.
\begin{itemize}
\item Prepare $q-2m+4$ boxes, $\displaystyle \frac{q}{2}-(m-2)$ on the upper side and $\displaystyle \frac{q}{2}-(m-2)$ on the lower side, 
and one circle labeled with $k$ on the right side of the boxes. 
\item The lower right box does not contain any numbers. 
Indeed, if there is a labeled circle in the lower right box, then the clockwise next circle from the opposite circle of the rightmost circle, say labeled with $s$, is the circle labeled with $k$. 
In other words, $x_k=-x_s+1$ holds, a contradiction. 
\item The upper right box does not contain any numbers larger than $k$. 
Indeed, if the rightmost label on the upper right box is $s$ with $k<s$, then the circle clockwise preceding the circle labeled with $k$ has the label greater than $k$.
In other words, $x_k=x_s+1$ holds, a contradiction. 
\item The upper right box does not contain $i$. 
Indeed, if $i$ is placed there, since there is no number $s>i$ in the same box as the one containing $i$, $i$ is placed on the rightmost position of the upper right box.
Thus $j$ is placed just next to $k$.
This implies that $x_j=x_k+1=-x_k+1$, a contradiction. 
\end{itemize}

We divide the discussions into which range $k$ belongs. 
\begin{itemize}
\item[(i\hspace{-0.5mm}i-1)] Let $1 \leq k \leq i-1$. 
\begin{itemize}
\item There are $(q-2m+2)$ boxes that can contain the numbers $1,\ldots,k-1$, which are the boxes except the upper left and lower right ones. 
\item There are $(q-2m+1)$ boxes that can contain the numbers $k+1,\ldots,i-1$, which are the boxes except the upper left, the upper right and the lower right ones. 
\item There are $(q-2m)$ boxes that can contain $i$, which are the boxes except the upper left, the upper right, the lower right and the lower left ones (see (a)). 
\item There are $(q-2m)$ boxes that can contain the numbers $i+1,\ldots,j-1$, which are the boxes except the upper left, the upper right, the lower right and the one containing $i$ (see (b)). 
\item There are $(q-2m-1)$ boxes that can contain the numbers $j+1,\ldots,m$, which are the boxes except the upper left, the upper right, the lower right, and the ones containing $i$ and $j$ (see (b) and (c)). 
\end{itemize}
Hence, the desired number is $$(q-2m-1)^{m-j}(q-2m)^{j-i}(q-2m+1)^{i-k-1}(q-2m+2)^{k-1}$$ 
in the case of $1 \leq k \leq i-1$. 
\item[(i\hspace{-0.5mm}i-2)] Suppose $k=i$. Since $x_k=-x_k$ holds, we obtain $x_i=-x_i=x_j$, a contradiction. 
\item[(i\hspace{-0.5mm}i-3)] Let $i+1 \leq k \leq j-1$. 
\begin{itemize}
\item There are $(q-2m+2)$ boxes that can contain the numbers $1,\ldots,i-1$, which are the boxes except the upper left and lower right ones. 
\item There are $(q-2m)$ boxes that can contain $i$, which are the boxes except the upper left, the upper right, the lower right and the lower left ones (see (a)). 
\item There are $(q-2m+1)$ boxes that can contain the numbers $i+1,\ldots,k-1$, which are the boxes except the upper left, the lower right ones and the one containing $i$ (see (b)). 
\item There are $(q-2m)$ boxes that can contain the numbers $k+1,\ldots,j-1$, which are the boxes except the upper left, the upper right, the lower right ones and the one containing $i$ (see (b)). 
\item There are $(q-2m-1)$ boxes that can contain the numbers $j+1,\ldots,m$, which are the boxes except the upper left, the upper right and the lower right ones and the ones containing $i$ and $j$ (see (b) and (c)). 
\end{itemize}
Hence, the desired number is $$(q-2m-1)^{m-j}(q-2m)^{j-k}(q-2m+1)^{k-i-1}(q-2m+2)^{i-1}$$ 
in the case of $i+1 \leq k \leq j-1$. 
\item[(i\hspace{-0.5mm}i-4)] Let $j+1 \leq k \leq m$. 
\begin{itemize}
\item There are $(q-2m+2)$ boxes that can contain the numbers $1,\ldots,i-1$, which are the boxes except the upper left and lower right ones. 
\item There are $(q-2m)$ boxes that can contain $i$, which are the boxes except the upper left, the upper right, the lower right and the lower left ones (see (a)). 
\item There are $(q-2m+1)$ boxes that can contain the numbers $i+1,\ldots,j-1$, which are the boxes except the upper left, the lower right ones and the one containing $i$ (see (b)).
\item There are $(q-2m)$ boxes that can contain the numbers $j+1,\ldots,k-1$, which are the boxes except the upper left and the lower right ones and the ones containing $i$ and $j$ (see (b) and (c)).
\item There are $(q-2m-1)$ boxes that can contain the numbers $k+1,\ldots,m$, which are the boxes except the upper left, the upper right and the lower right ones and the ones containing $i$ and $j$ (see (b) and (c)).
\end{itemize}
Hence, the desired number is $$(q-2m-1)^{m-k}(q-2m)^{k-j}(q-2m+1)^{j-i-1}(q-2m+2)^{i-1}$$ 
in the case of $j+1 \leq k \leq m$. 
\end{itemize}

From the discussion above, by applying Lemma~\ref{hodai}, we have that
\begin{align*}
\left|M(\mathcal{A}_q^{(5)})\right|&=(T-1)^{m-j}T^{j-i}(T+1)^{i-1}+\sum_{k=1}^{i-1}(T-1)^{m-j}T^{j-i}(T+1)^{i-k-1}(T+2)^{k-1} \\
&+\sum_{k=i+1}^{j-1}(T-1)^{m-j}T^{j-k}(T+1)^{k-i-1}(T+2)^{i-1} + \sum_{k=j+1}^m(T-1)^{m-k}T^{k-j}(T+1)^{j-i-1}(T+2)^{i-1} \\
&=(T-1)^{m-j}T^{j-i}(T+1)^{i-1}+(T-1)^{m-j}T^{j-i}((T+2)^{i-1}-(T+1)^{i-1}) \\
&+(T-1)^{m-j}T(T+2)^{i-1}((T+1)^{j-i-1}-T^{j-i-1}) \\
&+T(T+1)^{j-i-1}(T+2)^{i-1}(T^{m-j}-(T-1)^{m-j}) \\
&=T^{m-j+1}(T+1)^{j-i-1}(T+2)^{i-1},  
\end{align*}
where $T:=q-2m$.

\section{Characteristic quasi-polynomial of restriction on $\{x_i+x_j=1\}$}\label{sec-quasi-poly-restxi=-xj+1}
Let $\mathcal{A}^{(6)}=\mathcal{B}_m^{\{x_i+x_j=1\}}$ for $1\leq i<j \leq m$, and we prove the following theorem.
\begin{Them}\label{chara-quasi-del-by-xi=-xj+1}
We have
\begin{align*}
\left|M(\mathcal{A}^{(6)}_q)\right|=
\begin{cases}
(q-2m)^{i-1}(q-2m+1)^{j-i}(q-2m+2)^{m-j} \;\;&(q\ \text{is odd}), \\
(q-2m)^{i-1}(q-2m+1)^{j-i-1}((q-2m+2)^{m-j+1}-(q-2m+1)^{m-j}) &(q\ \text{is even}). 
\end{cases}
\end{align*}
\end{Them}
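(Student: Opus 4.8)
The plan is to translate the defining equation $x_i+x_j=1$, equivalently $x_j=-x_i+1$, into the box-and-circle model of Section~\ref{sec-count-method} and then count the admissible fillings. Using the dictionary set up there (compare the treatment of the relation $x_k=-x_s+1$ in Section~\ref{sec-count-method-even}), the equation $x_j=-x_i+1$ says that the circle labeled $j$ occupies the position immediately clockwise after the circle diametrically opposite the circle labeled $i$. Hence, once the box and position of the number $i$ are chosen, the circle $j$ is forced, so I place only the $m-1$ numbers in $[m]\setminus\{j\}$ and record $j$ implicitly. This case sits between the two already treated: it carries the \emph{opposite-circle} feature of the restriction on $\{x_i+x_j=0\}$ (Section~\ref{sec-quasi-poly-restxi=-xj}) and the \emph{shift-by-one} feature of the restriction on $\{x_i-x_j=1\}$ (Section~\ref{sec-quasi-poly-restxi=xj+1}), so I would adapt the bookkeeping of both.

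For odd $q$ I would argue by a direct count. After extracting the constraints coming from $x_s\neq x_t+1$, $x_s\neq -x_t$, and the non-excluded instances of $x_s\neq -x_t+1$ around the forced circle $j$, the numbers fall into three tiers according to their size relative to $i$ and $j$: each of $1,\dots,i-1$ admits $q-2m$ choices, each of $i,i+1,\dots,j-1$ admits $q-2m+1$ choices, and each of $j+1,\dots,m$ admits $q-2m+2$ choices (these last being the least constrained, forbidden only from the upper-left box). Multiplying yields $(q-2m)^{i-1}(q-2m+1)^{j-i}(q-2m+2)^{m-j}$, the odd constituent. The work here is to pin down the exact set of forbidden boxes for each tier---in particular which boxes are excluded by the opposite of $i$ and by the forced circle $j$---and to confirm that no boundary (rightmost-position) ambiguity forces an auxiliary case division, as it did for $\{x_i+x_j=0\}$.

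For even $q$ I would split, as in Sections~\ref{sec-quasi-poly-restxi=xj+1} and \ref{sec-count-method-x_i+x_j=0-even}, according to whether some coordinate equals $q/2$. If no $x_k$ equals $q/2$, I insert one unlabeled circle for the self-opposite position $q/2$ and repeat the odd count with each tier reduced by one box, obtaining $(q-2m-1)^{i-1}(q-2m)^{j-i}(q-2m+1)^{m-j}$. If $x_k=q/2$ for some $k$, I fix circle $k$ as the special self-opposite circle (so that, as usual, the lower-right box is empty and the upper-right box carries no label exceeding $k$) and subdivide by the range of $k$. Here $k=j$ is impossible, because $x_j=q/2$ forces $x_i=-x_j+1=x_j+1$, contradicting the still-active condition $x_i\neq x_j+1$ for $i<j$; by contrast $k=i$ stays admissible and must be counted on its own. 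Each of the ranges $1\le k\le i-1$, $k=i$, $i+1\le k\le j-1$, and $j+1\le k\le m$ contributes a monomial in $q-2m-1,\dots,q-2m+2$, and I would collapse the resulting geometric sums by Lemma~\ref{hodai} to reach the even constituent $(q-2m)^{i-1}(q-2m+1)^{j-i-1}\bigl((q-2m+2)^{m-j+1}-(q-2m+1)^{m-j}\bigr)$.

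The main obstacle is exactly this even-$q$ sub-case analysis: for each location of the $q/2$-circle I must determine precisely which distinguished boxes (upper-left, upper-right, lower-right, the box containing $i$, and its opposite box containing $j$) are unavailable to each block of indices, while simultaneously tracking the shift-by-one adjacency tying the circle $j$ to the opposite of $i$. Reconciling the admissible case $k=i$ with the forbidden case $k=j$ is what produces the asymmetric exponents and the single subtraction in the closed form; once the individual monomials are correct, the summation through Lemma~\ref{hodai} is routine.
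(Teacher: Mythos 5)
Your proposal follows the paper's proof essentially step for step: the same box-and-circle encoding of $x_j=-x_i+1$ (the circle labeled $j$ forced next to the opposite of the circle labeled $i$), the same odd/even split, the same three tiers $q-2m$, $q-2m+1$, $q-2m+2$ for odd $q$, the same subdivision of the even case by the location of the $q/2$-coordinate with $k=j$ excluded by the identical contradiction and $k=i$ retained as a separate admissible case, and Lemma~\ref{hodai} to collapse the sums. The constituents you state agree with the paper's.

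However, two of your justifications would fail if executed literally, and they are precisely the delicate points of the paper's argument rather than routine bookkeeping. First, in the odd case your explanation for the third tier --- that each of $j+1,\dots,m$ has $q-2m+2$ choices because it is ``forbidden only from the upper-left box'' --- cannot be correct: there are only $q-2m+1$ boxes in total, so that reading would give $q-2m$ choices. The count $q-2m+2$ arises because such a number can be inserted on \emph{either side} of the forced circles in the box containing $i$ and in its opposite box containing $j$ (two placements each), together with one-sided access to the upper-left box (only to the right of the circle there, since the left position would force $x_k=1$); this is the shift-by-one bookkeeping of Section~\ref{sec-quasi-poly-restxi=xj+1} that you invoke in your preamble but drop in the actual tier count. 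Second, in the even case your default ``as usual, the lower-right box is empty and the upper-right box carries no label exceeding $k$'' has exceptions in this setting which the paper states explicitly: when $j<k$ the number $i$ \emph{can} occupy the lower-right box (so $i$ gets $q-2m+2$ choices in the range $j+1\le k\le m$), and when $k=i$ the numbers larger than $i$ may enter the upper-right box and those larger than $j$ the lower-right box. Without these exceptions the monomials in your cases $k=i$ and $j+1\le k\le m$ come out wrong, and the geometric sums no longer telescope to the stated even constituent. So the skeleton is the paper's, but the items you defer as ``pinning down forbidden boxes'' are exactly where the proof has content, and your stated defaults for them are incorrect.
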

The complement $M(\mathcal{A}_q^{(4)})$ is the set of $(x_1,\dots,x_m)\in\mathbb{Z}_q^m$ satisfying the following conditions:
\begin{align*}
&x_j=-x_i+1,\\
&x_s\neq 0,\ x_s\neq 1\ (s\in [m]),\\
&x_s\neq x_t\ (s,t\in [m],\ s\neq t),\\
&x_s\neq x_t+1\ (s,t\in [m],\ s<t),\\
&x_s\neq -x_t,\ x_s\neq -x_t+1\ (s,t\in [m],\ s\neq t,\ \{s,t\}\neq \{i,j\}).
\end{align*}
We fix the condition $x_j=-x_i+1$ and count the number of elements in $(x_1,\dots,x_m)\in M(\mathcal{A}^{(6)}_q)$ using a modified version of the counting method described in Section \ref{sec-count-method}.
The condition $x_i=-x_j+1$ means that the circle labeled with $j$ is clockwisely next circle from the opposite circle of the circle labeled with $i$.
Therefore, once the box containing the number $i$ is determined, there is no need to specify the box containing the number $j$.

\subsection{The proof of Theorem \ref{chara-quasi-del-by-xi=-xj+1} when $q$ is odd}
In this subsection, we assume that $q$ is odd. 
\begin{itemize}
\item Prepare $q-2m+1$ boxes side by side, $\displaystyle \frac{q+1}{2}-m$ on the upper side and $\displaystyle \frac{q+1}{2}-m$ on the lower side. Place each of the numbers $1,\dots,j-1,j+1,\ldots,m$ in one of $q-2m+1$ boxes. 
The numbers $1,\ldots,i-1$ should avoid the upper left box, while the other numbers do not need to avoid it. (The reason will be explained later.) 
\item 
For the circle labeled with $i$, place the circle labeled with $j$ on the next to the left (resp. right) of the opposite side of it if $i$ is placed on the upper (resp. lower) side. 
For the circles whose labels are not $i$, place the unlabeled circles on the opposite side of the labeled circles arranged. 
Then the number in each of the lower boxes is placed in descending order, starting from next of circles placed in this way. 
Also rewrite each number as a circle labeled with the same number and place the unlabeled circles on the opposite side.
\begin{center}
\scalebox{0.8}{
\begin{tikzpicture}[main/.style = {draw, circle, very thick}] 
\draw [very thick] (0,0) rectangle (4,1); \draw [very thick] (0,1.5) rectangle (4,2.5); 
\node[main] at (1.5,2) {{\color{white}\large $0$}}; \node[main] at (2.5,2) {{\large $i$}}; 
\node[main] at (1.5,0.5) {{\large $j$}}; \node[main] at (2.5,0.5) {{\color{white}\large $0$}}; 
\end{tikzpicture}} \; or \; 
\scalebox{0.8}{
\begin{tikzpicture}[main/.style = {draw, circle, very thick}] 
\draw [very thick] (0,0) rectangle (4,1); \draw [very thick] (0,1.5) rectangle (4,2.5); 
\node[main] at (1.5,2) {{\color{white}\large $0$}}; \node[main] at (2.5,2) {{\large $j$}}; 
\node[main] at (1.5,0.5) {{\large $i$}}; \node[main] at (2.5,0.5) {{\color{white}\large $0$}}; 
\end{tikzpicture}}
\end{center}
\end{itemize}
For example, if $m=5$, $q=15$, $i=2$ and $j=3$, then the boxes and the number 
\begin{center}
\scalebox{0.8}{
\begin{tikzpicture}[main/.style = {draw, circle, very thick}] 
\draw [very thick] (0,0) rectangle (4,1); \draw [very thick] (5,0) rectangle (9,1); \draw [very thick] (10,0) rectangle (14,1);
\draw [very thick] (0,1.5) rectangle (4,2.5); \draw [very thick] (5,1.5) rectangle (9,2.5); \draw [very thick] (10,1.5) rectangle (14,2.5);
\node(1) at (5.5,2) {{\large $2$}}; \node(2) at (10.5,0.5) {{\large $4$}}; \node(3) at (0.5,0.5) {{\large $5$}}; \node(4) at (5.5,0.5) {{\large $1$}};
\end{tikzpicture}
}
\end{center}
correspond to the boxes and circles 
\begin{center}
\scalebox{0.8}{
\begin{tikzpicture}[main/.style = {draw, circle, very thick}] 
\draw [very thick] (0,0) rectangle (4,1); \draw [very thick] (5,0) rectangle (9,1); \draw [very thick] (10,0) rectangle (14,1);
\draw [very thick] (0,1.5) rectangle (4,2.5); \draw [very thick] (5,1.5) rectangle (9,2.5); \draw [very thick] (10,1.5) rectangle (14,2.5);
\node[main](1) at (0.5,2) {{\color{white}\large $0$}}; \node[main](2) at (1.5,2) {{\color{white}\large $0$}};
\node[main] at (5.5,2) {{\color{white}\large $0$}}; \node[main](3) at (6.5,2) {{\color{white}\large $0$}}; \node[main](4) at (7.5,2) {{\large $2$}};\node[main](5) at (8.5,2) {{\color{white}\large $0$}}; 
\node[main](7) at (10.5,2) {{\color{white}\large $0$}}; \node[main](8) at (11.5,2) {{\color{white}\large $0$}};
\node[main](9) at (1.5,0.5) {{\large $5$}};
\node[main](10) at (5.5,0.5) {{\color{white}\large $0$}}; \node[main](11) at (6.5,0.5) {{\large $3$}};\node[main](12) at (8.5,0.5) {{\large $1$}}; \node[main](5) at (7.5,0.5) {{\color{white}\large $0$}}; 
\node[main](14) at (10.5,0.5) {{\color{white}\large $0$}}; \node[main](15) at (11.5,0.5) {{\large $4$}};
\end{tikzpicture}
}
\end{center}
while this corresponds to the element $(x_1,x_2,x_3,x_4,x_5)=(10,4,12,8,14)$. 

We count the number of elements $(x_1,\ldots,x_m) \in M(\mathcal{A}_q^{(6)})$. 

\begin{itemize}
\item There are $q-2m$ boxes that can contain the numbers $1,\ldots,i-1$, which are the boxes except the upper left one. 
\item The number $i$ can be placed in the upper left box. 
Indeed, once $i$ is placed there, since $j$ is placed the lower left box, the unlabeled circle is placed in the upper left box at the second position from the left because no number less than $i$ is placed there. Hence, no contradiction occurs. 
\begin{center}
\scalebox{0.8}{
\begin{tikzpicture}[main/.style = {draw, circle, very thick}] 
\draw [very thick] (0,0) rectangle (4,1); \draw [very thick] (0,1.5) rectangle (4,2.5); 
\node[main](2) at (0.5,2) {{\color{white}\large $0$}};
\node[main](2) at (1.5,2) {{\color{white}\large $0$}};
\node[main](2) at (2.5,2) {{\large $i$}};
\node[main](4) at (1.5,0.5) {{\large $j$}}; \node[main](2) at (2.5,0.5) {{\color{white}\large $0$}};
\end{tikzpicture}
}
\end{center}
Thus, there are $q-2m+1$ boxes that can contain $i$. 
\item 
If $i$ is placed in the upper left box, then the number $k$ with $i<k<j$ can be placed there.

If $i$ is not placed in the upper left box, then the number $k$ with $i<k<j$ cannot be placed in the upper left box, 
but instead, there are two ways to place $k$ in the box that contains $i$. 
Indeed, if $i$ is placed in some box in the upper (resp. lower) side, then we can place $k$ to the left (resp. right) of $i$ since there will be an unlabeled circle just to the next box to the left (resp. right) of $i$. 
\begin{center}
\scalebox{0.8}{
\begin{tikzpicture}[main/.style = {draw, circle, very thick}] 
\draw [very thick] (0,0) rectangle (4,1); \draw [very thick] (0,1.5) rectangle (4,2.5); 
\node[main](2) at (0.5,2) {{\color{white}\large $0$}};
\node[main](2) at (1.5,2) {{\large $k$}};
\node[main](2) at (2.5,2) {{\color{white}\large $0$}};
\node[main](2) at (3.5,2) {{\large $i$}};
\node[main](2) at (0.5,0.5) {{\color{white}\large $0$}};
\node[main](2) at (1.5,0.5) {{\color{white}\large $0$}};
\node[main](2) at (2.5,0.5) {{\large $j$}};
\node[main](2) at (3.5,0.5) {{\color{white}\large $0$}};
\end{tikzpicture}
} and  
\scalebox{0.8}{
\begin{tikzpicture}[main/.style = {draw, circle, very thick}] 
\draw [very thick] (0,0) rectangle (4,1); \draw [very thick] (0,1.5) rectangle (4,2.5); 
\node[main](2) at (0.5,2) {{\color{white}\large $0$}};
\node[main](2) at (1.5,2) {{\color{white}\large $0$}};
\node[main](2) at (2.5,2) {{\large $i$}};
\node[main](2) at (3.5,2) {{\large $k$}};
\node[main](2) at (0.5,0.5) {{\color{white}\large $0$}};
\node[main](2) at (2.5,0.5) {{\color{white}\large $0$}};
\node[main](2) at (1.5,0.5) {{\large $j$}};
\node[main](2) at (3.5,0.5) {{\color{white}\large $0$}};
\end{tikzpicture}
}\; , \;
\scalebox{0.8}{
\begin{tikzpicture}[main/.style = {draw, circle, very thick}] 
\draw [very thick] (0,0) rectangle (4,1); \draw [very thick] (0,1.5) rectangle (4,2.5); 
\node[main](2) at (0.5,0.5) {{\color{white}\large $0$}};
\node[main](2) at (1.5,0.5) {{\large $i$}};
\node[main](2) at (2.5,0.5) {{\color{white}\large $0$}};
\node[main](2) at (3.5,0.5) {{\large $k$}};
\node[main](2) at (0.5,2) {{\color{white}\large $0$}};
\node[main](2) at (1.5,2) {{\color{white}\large $0$}};
\node[main](2) at (2.5,2) {{\large $j$}};
\node[main](2) at (3.5,2) {{\color{white}\large $0$}};
\end{tikzpicture}
} and  
\scalebox{0.8}{
\begin{tikzpicture}[main/.style = {draw, circle, very thick}] 
\draw [very thick] (0,0) rectangle (4,1); \draw [very thick] (0,1.5) rectangle (4,2.5); 
\node[main](2) at (0.5,2) {{\color{white}\large $0$}};
\node[main](2) at (1.5,2) {{\color{white}\large $0$}};
\node[main](2) at (2.5,2) {{\color{white}\large $0$}};
\node[main](2) at (3.5,2) {{\large $j$}};
\node[main](2) at (0.5,0.5) {{\color{white}\large $0$}};
\node[main](2) at (2.5,0.5) {{\large $i$}};
\node[main](2) at (1.5,0.5) {{\large $k$}};
\node[main](2) at (3.5,0.5) {{\color{white}\large $0$}};
\end{tikzpicture}
}
\end{center}

Hence, there are $q-2m+1$ boxes that can contain the numbers $i+1,\ldots,j-1$. 
\item Regarding the number $k$ with $k>j$, if $i$ is placed in the upper left (resp. lower left) box, then there are two ways to place $k$ in the lower left box, which contains $j$ (resp. $i$), by the similar reason to the above. 
In addition, we can place $k$ at the right side of the circle labeled with $i$ (resp. $j$) in the upper left box.
Note that the left side of the circle labeled with $i$ (resp. $j$) in the upper left box is forbidden for $k$ to be placed. 
Indeed, if $k$ is placed there, then we have $x_k=1$, a contradiction. 
If $i$ is not placed in the upper left box, then $k$ cannot be placed there, 
but instead, there are two ways to place the number $k$ with $k>j$ in the boxes that contains $i$ and $j$ by the similar reason to the above. 

Hence, there are $q-2m+2$ boxes that can contain the numbers $j+1,\ldots,m$. 
\end{itemize}

Therefore, we have that 
\begin{align*}
\left|M(\mathcal{A}_q^{(6)})\right|=T^{i-1}(T+1)^{j-i}(T+2)^{m-j}, 
\end{align*}
where $T:=q-2m$. 

\subsection{The proof of Theorem \ref{chara-quasi-del-by-xi=-xj+1} when $q$ is even}\label{sec-count-method-x_i+x_j=1-even}
In this subsection, we assume that $q$ is even.
We count the number of elements $(x_1,\dots,x_m)\in M(\mathcal{A}_q^{(6)})$ by dividing the cases by whether there exists an index $k$ such that $\displaystyle x_k=\frac{q}{2}$ or not.

(i)\ Suppose that there is no index $k\in[m]$ such that $\displaystyle x_k=\frac{q}{2}$.
Prepare $q-2m$ boxes, $\displaystyle \frac{q}{2}-m$ on the upper side and $\displaystyle \frac{q}{2}-m$ on the lower side, 
and one circle corresponding to the element $\displaystyle \frac{q}{2}\in\mathbb{Z}_q$ on the right side of the boxes. 
In this case, we see that the possible ways to place those numbers is almost the same as the case when $q$ is odd but we may just replace $q$ with $q-1$ because of the number of boxes. 
Therefore, in this case, the desired number is $$(q-2m-1)^{i-1}(q-2m)^{j-i}(q-2m+1)^{m-j}.$$ 

\bigskip

(i\hspace{-0.5mm}i)\ Suppose that there exists an index $k\in[m]$ such that $\displaystyle x_k=\frac{q}{2}$.
Create boxes and circles according to the following rules and take the corresponding element $(x_1,\dots,x_m)\in M(\mathcal{A}_q^{(6)})$.
\begin{itemize}
\item Prepare $q-2m+2$ boxes, $\displaystyle \frac{q}{2}-(m-1)$ on the upper side and $\displaystyle \frac{q}{2}-(m-1)$ on the lower side, 
and one circle labeled with $k$ on the right side of the boxes. 
\item The number $i$ can be placed in the lower right box if $j<k$.
Indeed, in this case, we have $x_k=x_j+1$ and $x_k\neq -x_i+1$, while no contradiction occurs.
\begin{center}
\scalebox{0.8}{
\begin{tikzpicture}[main/.style = {draw, circle, very thick}] 
\draw [very thick] (0,0) rectangle (4,1); \draw [very thick] (0,1.5) rectangle (4,2.5);
\node[main](0) at (5,1.25) {{\large $k$}}; \node[main](1) at (3.5,2) {{\large $j$}}; \node[main](2) at (3.5,0.5) {{\color{white}\large $0$}}; \node[main](3) at (2.5,2) {{\color{white}\large $0$}}; \node[main](4) at (2.5,0.5) {{\large $i$}};
\end{tikzpicture}
}
\end{center}
Also, the numbers larger than $j$ can be placed in the lower right box if $k=i$.
(We will explain later in (i\hspace{-0.5mm}i-2).) 
The lower right box does not contain any numbers, except as previously noted. 
Indeed, if there is a labeled circle in the lower right box in such cases, then the clockwise next circle from the opposite circle of the rightmost circle, say labeled with $s$, is the circle labeled with $k$. 
In other words, $x_k=-x_s+1$ holds, a contradiction. 
\item The upper right box does not contain any numbers larger than $k$, except the case when $k=i$.
(We will explain later in (i\hspace{-0.5mm}i-2).) 
Indeed, if $k\neq i$ and the upper right box contains a number larger than $k$, then the rightmost circle in the upper right box is a labeled circle, say labeled with $s$ ($k<s$).
The circle clockwise preceding the circle labeled with $k$ has the label greater than $k$.
In other words, $x_k=x_s+1$ holds, a contradiction.
\end{itemize}
We divide the discussions into which range $k$ belongs. 
\begin{itemize}
\item[(i\hspace{-0.5mm}i-1)] Let $1 \leq k \leq i-1$. 
\begin{itemize}
\item There are $q-2m$ boxes that can contain the numbers $1,\ldots,k-1$, which are the boxes except the upper left and lower right ones. 
\item There are $q-2m-1$ boxes that can contain the numbers $k+1,\ldots,i-1$, which are the boxes except the upper left, the upper right and the lower right ones. 
\item Similarly to the case when $q$ is odd, 
there are $q-2m$ boxes that can contain $i, i+1,\ldots,j-1$. 
\item There are $q-2m+1$ boxes that can contain the numbers $j+1,\ldots,m$. 
\end{itemize}
Hence the desired number is $$(q-2m-1)^{i-k-1}(q-2m)^{j-i+k-1}(q-2m+1)^{m-j}$$ 
in the case of $1 \leq k \leq i-1$. 
\item[(i\hspace{-0.5mm}i-2)] Let $k=i$. Then $j$ is placed in the rightmost position of the lower right box, and there is an unlabeled circle in the rightmost position of the upper right box.
Thanks to this rightmost unlabeled circle, the numbers larger than $k=i$ can also be placed in the upper right box.
In addition, since the rightmost label on the lower right box is $j$, the numbers larger than $j$ can be placed in the lower right box.
Note that if $s$ ($s<j$) is contained in the lower right box and $s$ is the smallest number in it, then $x_s=x_j+1$ holds, a contradiction.
\begin{center}
\scalebox{0.8}{
\begin{tikzpicture}[main/.style = {draw, circle, very thick}] 
\draw [very thick] (0,0) rectangle (4,1); \draw [very thick] (0,1.5) rectangle (4,2.5);
\node[main](0) at (5,1.25) {{\large $i$}}; \node[main](1) at (3.5,2) {{\color{white}\large $0$}}; \node[main](2) at (3.5,0.5) {{\large $j$}};
\end{tikzpicture}
}
\end{center}
\begin{itemize}
\item There are $q-2m$ boxes that can contain the numbers $1,\ldots,i-1$, which are the boxes except the upper left and the lower right ones. 
\item Similarly, there are $q-2m$ boxes that can contain the numbers $i+1,\ldots,j-1$, which are the boxes except the upper left and the lower right ones.
\item There are $q-2m+1$ boxes that can contain the numbers $j+1,\ldots,m$, which are the boxes except the upper left one.
\end{itemize}
Hence the desired number is $$(q-2m)^{j-2}(q-2m+1)^{m-j}$$ 
in the case of $k=i$. 
\item[(i\hspace{-0.5mm}i-3)] Let $i+1 \leq k \leq j-1$. 
\begin{itemize}
\item There are $q-2m$ boxes that can contain the numbers $1,\ldots,i-1$, which are the boxes except the upper left and lower right ones. 
\item There are $q-2m+1$ boxes that can contain $i$. 
Similarly, there are $q-2m+1$ boxes that can contain the numbers $i+1,\ldots,k-1$. 
\item There are $q-2m$ boxes that can contain the numbers $k+1,\ldots,j-1$. 
\item There are $q-2m+1$ boxes that can contain the numbers $j+1,\ldots,m$. 
\end{itemize}
Hence the desired number is $$(q-2m)^{i+j-k-2}(q-2m+1)^{m-i-j+k}$$ 
in the case of $i+1 \leq k \leq j-1$. 
\item[(i\hspace{-0.5mm}i-4)] Suppose $k=j$. Since $x_k=-x_k$ holds, we obtain $x_i=-x_j+1=x_j+1$, a contradiction to $x_i-x_j \neq 1$. 
\item[(i\hspace{-0.5mm}i-5)] Let $j+1 \leq k \leq m$. 
\begin{itemize}
\item There are $q-2m$ boxes that can contain the numbers $1,\ldots,i-1$. 
\item There are $q-2m+2$ boxes that can contain $i$, since $i$ can be placed in the lower right box. 
\item There are $q-2m+1$ boxes that can contain the numbers $i+1,\ldots,j-1$. 
\item There are $q-2m+2$ boxes that can contain the numbers $j+1,\ldots,k-1$. 
\item There are $q-2m+1$ boxes that can contain the numbers $k+1,\ldots,m$. 
\end{itemize}
Hence the desired number is $$(q-2m)^{i-1}(q-2m+1)^{m-i+j-k-1}(q-2m+2)^{k-j}$$ 
in the case of $j+1 \leq k \leq m$. 
\end{itemize}

From the discussion above, by applying Lemma~\ref{hodai}, we have that
\begin{align*}
\left|M(\mathcal{A}_q^{(6)})\right|&=(T-1)^{i-1}T^{j-i}(T+1)^{m-j}+\sum_{k=1}^{i-1}(T-1)^{i-k-1}T^{j-i+k-1}(T+1)^{m-j}\\
&+\sum_{k=i}^{j-1}T^{i+j-k-2}(T+1)^{m-i-j+k} + \sum_{k=j+1}^m T^{i-1}(T+1)^{m-i+j-k-1}(T+2)^{k-j} \\
&=(T-1)^{i-1}T^{j-i}(T+1)^{m-j}+T^{j-i}(T+1)^{m-j}(T^{i-1}-(T-1)^{i-1}) \\
&+T^{i-1}(T+1)^{m-j}((T+1)^{j-i}-T^{j-i}) \\
&+T^{i-1}(T+1)^{j-i-1}(T+2)((T+2)^{m-j}-(T+1)^{m-j}) \\
&=T^{i-1}(T+1)^{j-i-1}((T+2)^{m-j+1}-(T+1)^{m-j}), 
\end{align*}
where $T:=q-2m$. 

\section{Period collapse in the characteristic quasi-polynomial of deletion of $\mathcal{B}_m$}

In this section, we prove Corollaries~\ref{period-collapse-1} and \ref{period-collapse-2} by using Theorem~\ref{thm:main}. 
As mentioned in Section~\ref{subsec:equiv}, we may check the period collapse of the restriction instead of the deletion. 

\begin{proof}[Proof of Corollary~\ref{period-collapse-1}]
If $H=\{x_i=0\}$ ($1 \leq i \leq m$) or $H=\{x_i=1\}$ ($1 \leq i \leq m$) or $H=\{x_i-x_j=1\}$ ($1 \leq i < j \leq m)$, then $|M(\mathcal{B}_m^H)_q|$ becomes a polynomial as Theorem~\ref{thm:main} shows. 

Let $H=\{x_i-x_j=0\}$ for $1 \leq i<j \leq m$. 
Then $|M(\mathcal{B}_m^H)_q|$ becomes a polynomial if and only if \begin{align*}
    U^{j-i-1}(U+1)^{m-j}((U+1)^i-U^{i-1}) = U^{j-i-1}(U+1)^{i-1}((U+1)^{m-j+1}-U^{m-j}),  
\end{align*}
where $U:=q-2m+1$. Thus, we conclude that 
\begin{align*}
&U^{j-i-1}(U+1)^{m-j}((U+1)^i-U^{i-1}) = U^{j-i-1}(U+1)^{i-1}((U+1)^{m-j+1}-U^{m-j}) \\
\Longleftrightarrow \;\; &(U+1)^{m+i-j}-U^{i-1}(U+1)^{m-j} = (U+1)^{m+i-j}-U^{m-j}(U+1)^{i-1} \\
\Longleftrightarrow \;\; &U^{i-1}(U+1)^{m-j} = U^{m-j}(U+1)^{i-1} 
\;\; \Longleftrightarrow \;\; i-1 = m-j
\;\; \Longleftrightarrow \;\; i+j = m+1.
\end{align*}

Let $H=\{x_i+x_j=0\}$ for $1 \leq i<j \leq m$. Then $|M(\mathcal{B}_m^H)_q|$ becomes a polynomial if and only if \begin{align*}
&T^{m-j}(T+1)^{j-i}((T+2)^{i-1}-(T+1)^{i-2}) = T^{m-j+1}(T+1)^{j-i-1}(T+2)^{i-1} \\
\Longleftrightarrow \;\; &(T+1)(T+2)^{i-1}-(T+1)^{i-1} = T(T+2)^{i-1} \;\;
\Longleftrightarrow \;\; (T+2)^{i-1} = (T+1)^{i-1} \;\;
\Longleftrightarrow \;\; i=1, 
\end{align*}
where $T:=q-2m$. 

Let $H=\{x_i+x_j=1\}$ for $1 \leq i<j \leq m$. Then $|M(\mathcal{B}_m^H)_q|$ becomes a polynomial if and only if \begin{align*}
&T^{i-1}(T+1)^{j-i}(T+2)^{m-j}=T^{i-1}(T+1)^{j-i-1}((T+2)^{m-j+1}-(T+1)^{m-j}) \\
\Longleftrightarrow \;\; &(T+1)(T+2)^{m-j} = (T+2)^{m-j+1}-(T+1)^{m-j} \;\;
\Longleftrightarrow \;\; (T+1)^{m-j} = (T+2)^{m-j} \;\;\Longleftrightarrow \;\; j=m. 
\end{align*}
\end{proof}
\begin{proof}[Proof of Corollary~\ref{period-collapse-2}]
Let $\mathcal{A}=\mathcal{B}_m$.
If $H$ and $H^{\prime}$ in $\mathcal{A}$ are parallel each other, 
since $(\mathcal{A}\setminus \{H\})^{H^{\prime}}=\mathcal{A}^{H^{\prime}}$, we see that \[
|M(\mathcal{A}_q)|=|M(\mathcal{A}_q\setminus \{H \})| - |M(\mathcal{A}_q^H )|=|M(\mathcal{A}_q\setminus \{H,H^{\prime}\})| - |M(\mathcal{A}_q^H)| - |M(\mathcal{A}_q^{{H^{\prime}}})|. 
\]
Hence, period collapse in $|M(\mathcal{A}_q\setminus \{H,H^{\prime}\})|$ is equivalent to period collapse in $|M(\mathcal{A}_q^{H})| + |M(\mathcal{A}_q^{H^{\prime}})|$ when period collapse occurs in $|M(\mathcal{A}_q)|$. 

In the case of the pair $H=\{x_i=0\}$ and $H^{\prime}=\{x_i=1\}$ for $1 \leq i \leq m$, it is clear that $|M(\mathcal{A}_q^{H})| + |M(\mathcal{A}_q^{H^{\prime}})|$ becomes a polynomial.  

In the case of the pair $H=\{x_i-x_j=0\}$ and $H^{\prime}=\{x_i-x_j=1\}$ for $1 \leq i<j \leq m$, 
since $|M(\mathcal{A}_q^{H^{\prime}})|$ is a polynomial, the desired condition is $i+j=m+1$ as Corollary~\ref{period-collapse-1} shows. 

In the case of the pair $H=\{x_i+x_j=0\}$ and $H^{\prime}=\{x_i+x_j=1\}$ for $1 \leq i<j \leq m$, we see the following: 
\begin{align*}
T^{m-j}(&T+1)^{j-i}((T+2)^{i-1}-(T+1)^{i-2})+T^{i-1}(T+1)^{j-i}(T+2)^{m-j} \\
&= T^{m-j+1}(T+1)^{j-i-1}(T+2)^{i-1}+T^{i-1}(T+1)^{j-i-1}((T+2)^{m-j+1}-(T+1)^{m-j}) \\
\Longleftrightarrow \;\; T^{m-j}(&T+1)((T+2)^{i-1}-(T+1)^{i-2})+T^{i-1}(T+1)(T+2)^{m-j} \\
&= T^{m-j+1}(T+2)^{i-1}+T^{i-1}((T+2)^{m-j+1}-(T+1)^{m-j}) \\
\Longleftrightarrow \;\; T^{m-j}(&(T+2)^{i-1}-(T+1)^{i-1})=T^{i-1}((T+2)^{m-j}-(T+1)^{m-j}) \\
\Longleftrightarrow \;\;\; m-j&=i-1
\;\;\Longleftrightarrow \;\; i+j=m+1, 
\end{align*}
as desired. 
\end{proof}


\begin{thebibliography}{10}
\bibitem{Athanasiadis1996} C. A. Athanasiadis. Characteristic polynomials of subspace arrangements and finite fields. \textit{Adv. Math.}, {\bf 122}, 193--233, (1996).
\bibitem{HTY2023} A. Higashitani, T. N. Tran and M. Yoshinaga. Period Collapse in Characteristic Quasi-Polynomials of Hyperplane Arrangements. \textit{Int. Math. Res. Not. IMRN}, {\bf 10}, 8934--8963, (2023).
\bibitem{KTT08} H. Kamiya, A. Takemura, and H. Terao. Periodicity of hyperplane arrangements with integral coefficients modulo positive integers. \textit{J. Algebr. Comb.}, {\bf 27} (3), 317--330, (2008).
\bibitem{KTT11} H. Kamiya, A. Takemura, and H. Terao. Periodicity of non-Central integral arrangements modulo positive integers. \textit{Ann. Comb.}, {\bf 15} (3), 449--464, (2011).
\bibitem{Liu-Tran-Yoshinaga21} Y. Liu, T. N. Tran, and M. Yoshinaga. $G$-Tutte polynomials and abelian Lie group arrangements. \textit{Int. Math. Res. Not. IMRN}, {\bf 1}, 150--188 (2021).
\bibitem{MN2024} Y. Mori and N. Nakashima. Characteristic quasi-polynomials for deformations of Coxeter arrangements of types A, B, C, and D. arXiv:2208.00735. 
\bibitem{PostnikovStanley} A. Postnikov and R. Stanley, Deformations of Coxeter hyperplane arrangements. \textit{J. Combin. Theory Ser. A}, {\bf 91}, 544--597, (2000).
\bibitem{Tamura23} S. Tamura, Postnikov–Stanley Linial arrangement conjecture. \textit{J. Algebr. Comb.} {\bf 58}, 651--679, (2023). 
\bibitem{Yoshinaga2018a} M. Yoshinaga. Characteristic polynomials of Linial arrangements for exceptional root systems. \textit{J. Combin.
Theory Ser. A}, {\bf 157}, 267--286, (2018).
\bibitem{Yoshinaga2018} M. Yoshinaga. Worpitzky partitions for root systems and characteristic quasi-polynomials. \textit{Tohoku Math. J.}, {\bf 70}, 39--63, (2018).
\end{thebibliography}
\end{document}